\numberwithin{equation}{section}
\def\setpdfinfo{
	\hypersetup{pdfinfo={
		Title={\@title},
		Author={\@author},
		Subject={\@thesistype},
		Keywords={\@keywords}}}
}
\definecolor{faunat}{RGB}{0,155,119}
\definecolor{faulnat}{RGB}{170,207,189}
\definecolor{faullnat}{RGB}{229,239,234}
\colorlet{cfaunat}{faunat>twheel,1,2}
\colorlet{teco}{faunat>twheel,6,12}
\newcommand{\R}{\mathbb{R}}
\newcommand{\N}{\mathbb{N}}
\newcommand{\calN}{\mathcal{N}}
\renewcommand{\H}{\mathcal{H}}
\newcommand{\X}{\mathcal{X}}
\renewcommand{\d}{\mathrm{d}}
\newcommand{\sg}{\zeta}
\newcommand{\func}{\mathcal{J}}
\newcommand{\dom}{\operatorname{dom}}
\newcommand{\tv}{\operatorname{TV}}
\DeclareMathOperator*{\esssup}{ess\,sup}
\DeclareMathOperator*{\argmin}{arg\,min}
\DeclareMathOperator*{\argmax}{arg\,max}
\newcommand{\dist}{\operatorname{dist}}
\newcommand{\norm}[1]{\left\|#1\right\|}
\newcommand{\abs}[1]{\left|#1\right|}
\newcommand{\st}{\,:\,}
\newcommand{\gammato}{\overset{\Gamma}{\to}}
\mathchardef\mhyphen="2D
\newcommand{\pprox}{\operatorname{prox}^p}
\newcommand{\prox}{\operatorname{prox}}
\newcommand{\tex}{T_\mathrm{ex}}
\renewcommand{\div}{\mathrm{div}}
\newtheorem{thm}{Theorem}[section]
\newtheorem{prop}{Proposition}[section]
\newtheorem{lemma}{Lemma}[section]
\theoremstyle{definition}
\newtheorem{definition}{Definition}[section]
\newtheorem{ass}{Assumption}
\newtheorem*{ass*}{Assumption}
\theoremstyle{remark}
\newtheorem{rem}{Remark}[section]
\newtheorem{example}{Example}[section]
\newcommand{\revise}[1]{{\color{orange}#1}}
\renewcommand{\revise}[1]{{#1}}
\title{{Gradient Flows and Nonlinear Power Methods for the Computation of Nonlinear Eigenfunctions}
}
\author{Leon Bungert\thanks{Hausdorff Center for Mathematics, University of Bonn, Endenicher Allee 62, Villa Maria,
53115 Bonn, Germany. Email: \href{mailto:leon.bungert@hcm.uni-bonn.de}{leon.bungert@hcm.uni-bonn.de}} \and Martin Burger\thanks{Friedrich-Alexander University Erlangen-Nürnberg, Department Mathematics, Cauerstr. 11, 91058 Erlangen, Germany. Email: \href{mailto:martin.burger@fau.de}{martin.burger@fau.de}}}
\date{\today}
\let\blx@rerun@biber\relax
\begin{document}

\maketitle

\begin{abstract}
    This chapter describes how gradient flows and nonlinear power methods in Banach spaces can be used to solve nonlinear eigenvector-dependent eigenvalue problems, and how convergence of (discretized) approximations can be verified.
    We review several flows from literature, which were proposed to compute nonlinear eigenfunctions, and show that they all relate to normalized gradient flows.  
    Furthermore, we show that the implicit Euler discretization of gradient flows gives rise to a nonlinear power method of the proximal operator and prove their convergence to nonlinear eigenfunctions.
    Finally, we prove that $\Gamma$-convergence of functionals implies convergence of their ground states, which is important for discrete approximations.
    \\
    \textbf{Keywords:} Gradient Flows, Nonlinear Eigenvalue Problems, Nonlinear Power Methods, Ground States, Gamma-convergence.
\end{abstract}

\tableofcontents

\section{Introduction}

Nonlinear eigenvalue problems appear in different applications in physics, \citep{weinstein1982nonlinear,
cohen2018energy}, mathematics \citep{cances2010numerical,
rabinowitz1971some,
amann1976fixed}, and also in modern disciplines like data science \citep{hein2010inverse,
buhler2009spectral,bungert2019computing,
gilboa2018nonlinear}.
While linear eigenvalue problem have the form $\lambda u = A u$, where $A$ is a linear operator or a matrix, with \emph{nonlinear eigenvalue problems} we refer to equations of the form $\lambda B(u)=A(u)$ which depend non-linearly on the eigenvector.
For instance, solutions of such nonlinear eigenvalue problems can be used to describe non-Newtonian fluids or wave propagation in a nonlinear medium.
Only in the last years it was noticed that nonlinear eigenvalue problems can also be used for solving tasks arising in data science.

Note that eigenfunctions of linear operators have been used for decomposing and filtering data like audio {signals} for decades.
Their most prominent occurrence is the Fourier transform which decomposes signals into trigonometric functions, in other words eigenfunctions of the Laplacian operator {on the unit cube}.
While this is the most popular technique for audio processing, for the filtering of images the Fourier transform is only of limited use since it has a hard time resolving the discontinuities {naturally arising} as edges in images.
Instead, nonlinear operators like the $1$-Laplacian have turned out to be suitable tools for defining nonlinear spectral decompositions and filtering of data \citep{gilboa2018nonlinear,
gilboa2013spectral,
gilboa2014total,
benning2017nonlinear,
burger2016spectral,
fumero2020nonlinear,
burger2015spectral,
gilboa2016nonlinear}).
In these works the total variation flow \citep{andreu2002some} $\partial_t u(t) \in -\partial\tv(u(t))$ with $u(0)=f$ was utilized to define a nonlinear spectral representation of $f$ as $f = \int_0^T \phi(t)\d t$,
where the functions $\phi(t)$ for $t>0$ are computed from derivatives of the solution $u(t)$ and are (at least close to) eigenfunctions of the $1$-Laplacian operator $\Delta_1 u = \div(\nabla u/|\nabla u|)$, see \citet{gilboa2014total}.
\citet{bungert2019nonlinear} investigated these decompositions theoretically and proved conditions for the functions $\phi(t)$ to be nonlinear eigenfunctions.
The representation above can be used for multiple image processing tasks.
For example, in \cref{fig:santa} we show an application to face fusion \citep{benning2017nonlinear}, where the spectral representations of two images are combined.

Besides spectral decompositions, also the computation of ground states, i.e., eigenfunctions with minimal eigenvalue has important applications.
A prototypical example in data science is graph clustering, where eigenfunctions of the graph $p$-Laplacian operators \citep{elmoataz2015p} are used for partitioning graphs.
As observed by \citet{hein2010inverse,
buhler2009spectral,
bungert2019computing},
eigenfunctions of the nonlinear $1$-Laplacian are more suitable for clustering than Laplacian eigenfunctions since they allow for sharp discontinuities, see \cref{fig:graph_clustering_primer}.

\begin{figure}[t!]
    \def\Width{0.24\textwidth}
    \centering
    {\includegraphics[width=\Width]{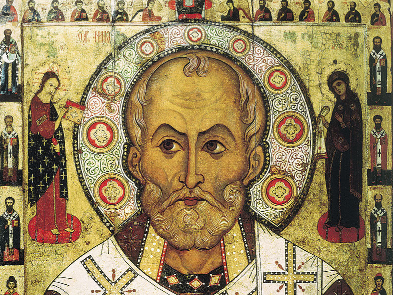}}%
    \hfill%
    {\includegraphics[width=\Width]{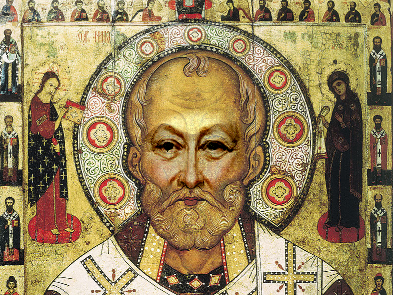}}%
    \hfill%
    {\includegraphics[width=\Width]{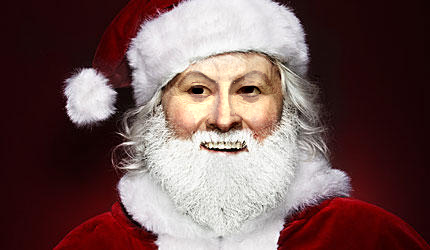}}%
    \hfill%
    {\includegraphics[width=\Width]{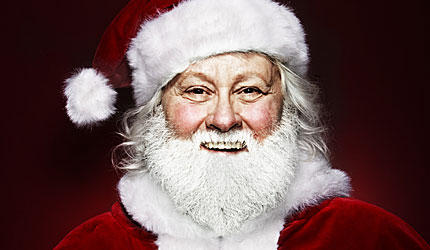}}%
    \caption{Nonlinear spectral face fusion using method by \citet{benning2017nonlinear}. Left to right: Saint Nicholas, Fusion ``Saint Nichoclaus'', Fusion ``Santolas'', Santa Claus}
    \label{fig:santa}
\end{figure}

\begin{figure}[t!]
    \def\Width{0.32\textwidth}
    \centering
    \begin{subfigure}{\Width}
    {\includegraphics[angle=0,width=\textwidth,trim=3cm 4cm 2.5cm 3.5cm,clip]{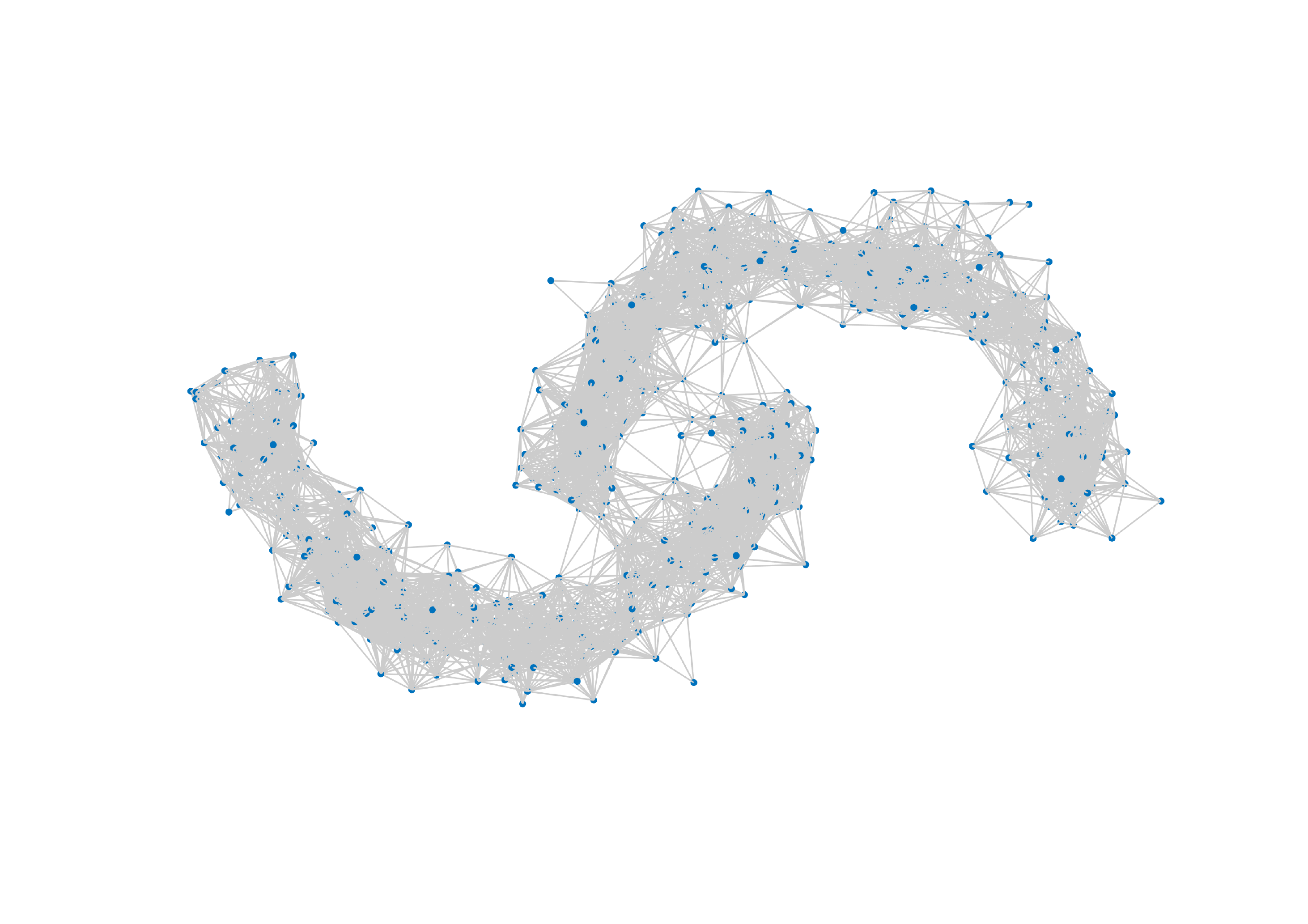}}%
    \caption{Two-moon graph}
    \end{subfigure}
    \hfill%
    \begin{subfigure}{\Width}
    {\includegraphics[angle=0,width=\textwidth,trim=3cm 4cm 2.5cm 3.5cm,clip]{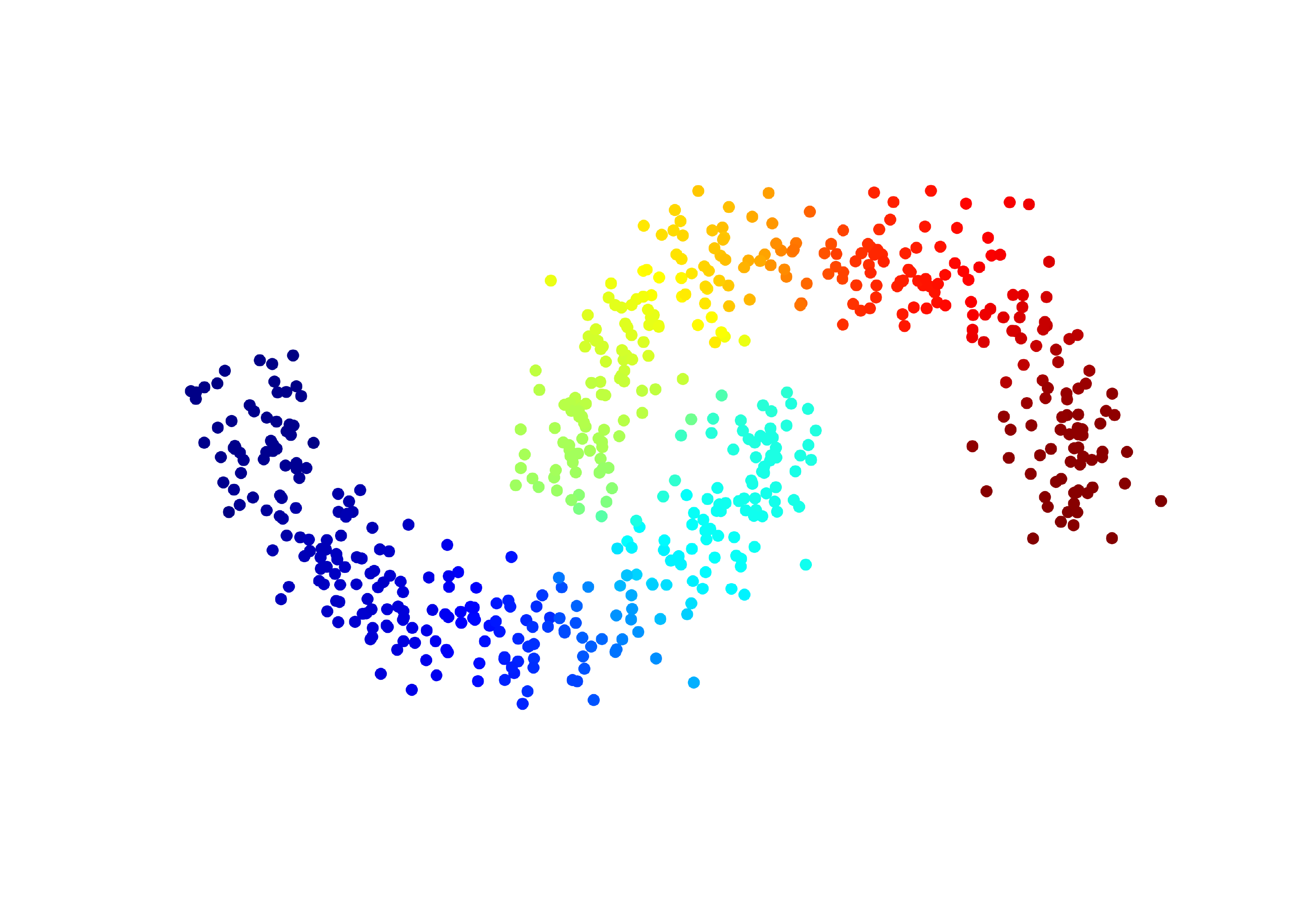}}%
    \caption{$2$-Laplacian eigenfunction}
    \end{subfigure}
    \begin{subfigure}{\Width}
    \hfill%
    {\includegraphics[angle=0,width=\textwidth,trim=3cm 4cm 2.5cm 3.5cm,clip]{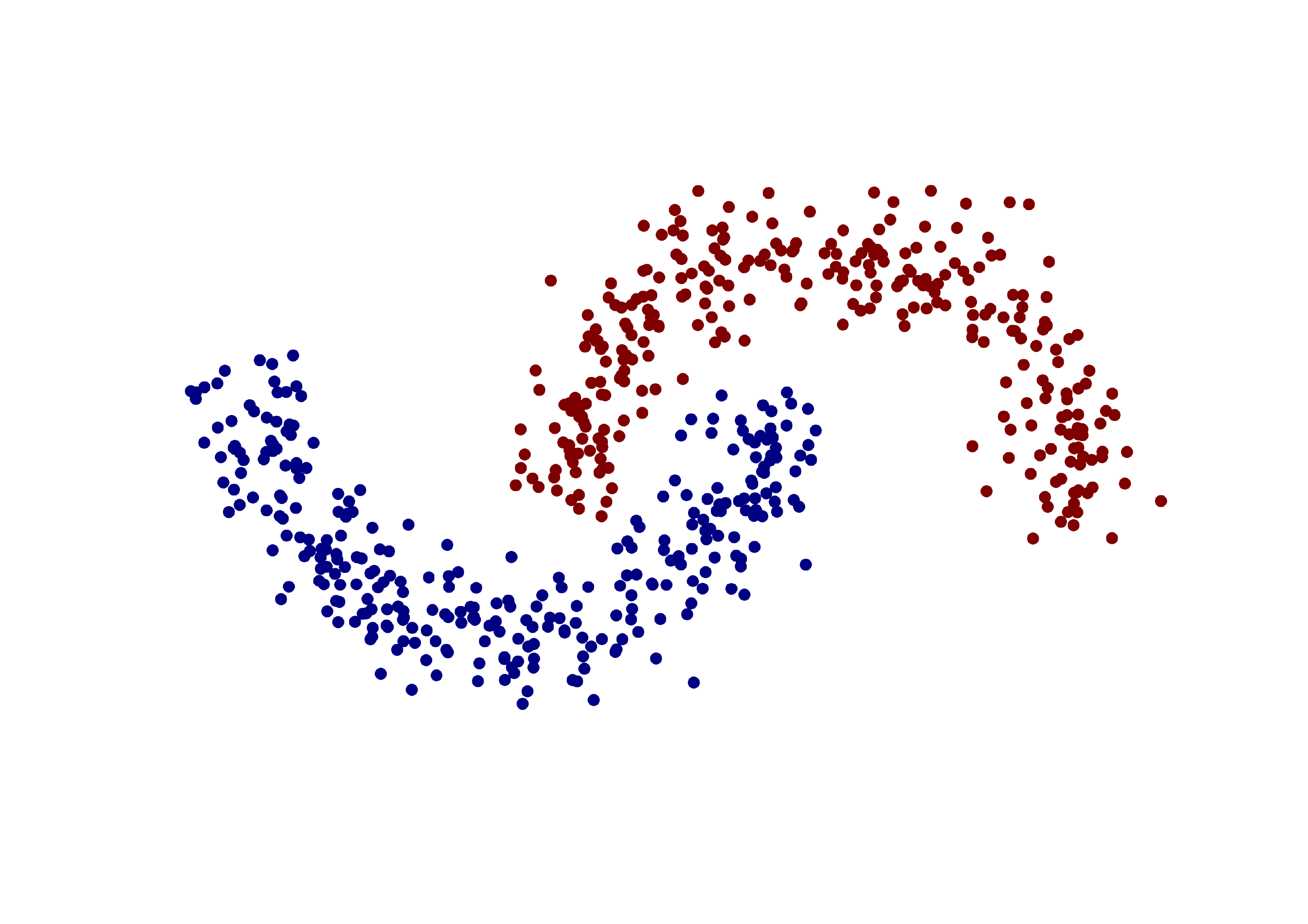}}%
    \caption{$1$-Laplacian eigenfunction}
    \end{subfigure}
    \caption{Spectral graph clustering}
    \label{fig:graph_clustering_primer}
\end{figure}

In this chapter we focus on the following class of variational nonlinear eigenvalue problems on a Banach space $\X$, where the nonlinear operators are subdifferentials.
For a convex function $f:\X\to(-\infty,\infty]$ its subdifferential is defined as
\begin{align}\label{eq:subdiff}
\partial f(u) := \{\sg\in\X^*\st f(u) + \langle\sg,v-u\rangle\leq f(v),\;\forall v\in\X\}.
\end{align}
Let us also define the so-called duality mapping of $\X$ with respect to a number $p\in[1,\infty)$ as $\Phi_\X^p = \partial\frac{1}{p}\norm{\cdot}^p$ (see \cref{sec:preliminaries} for details).

\begin{definition}[Nonlinear eigenvectors]\label{def:eigenvectors}
Let $\func:\X\to(-\infty,\infty]$ be proper and convex, and $p\in[1,\infty)$.
A tuple $(u,\hat{u})\in\X\times\argmin\func$ is called $p$-eigenvector of $\func$ with eigenvalue $\lambda\in\R$ if 
\begin{align}\label{eq:gen_ev_prob}
    \lambda\Phi_\X^p(\hat{u}-u) + \partial\func(u) \ni 0.
\end{align}
{If $\argmin\func$ is a singleton (in particular if $\hat u =0$), we refer simply to $u$ as the $p$-eigenvector.}
\end{definition}
Note that one could replace the duality map with the subdifferential of a different functional, however, for most nonlinear eigenvalue problems which are interesting in applications this is not necessary.
The reason why we define eigenvectors as tuples instead of single vectors will become clear in a second.
Of particular interest in many applications ranging from physics to data science are eigenvectors with minimal eigenvalue which are called \emph{ground states} and defined as follows:
\begin{definition}[Ground states]\label{def:ground_states}
Let $\func:\X\to(-\infty,\infty]$ be proper and $p\in[1,\infty)$.
A tuple $(u_*,\hat{u}_*)\in\X\times\argmin\func$ is called $p$-ground state of $\func$ if 
\begin{align}\label{eq:ground_states}
    (u_*,\hat{u}_*)\in\argmax_{\hat{u}\in\argmin\func}\argmin_{u\in\X}\frac{\func(u)-\func(\hat{u})}{\norm{u-\hat{u}}^p}.
\end{align}
If $\argmin \func$ is a singleton we refer to $u_*$ as the $p$-ground state, and $\argmax$ is not needed.
\end{definition}
Note that throughout the chapter $\argmin$ and $\argmax$ are subsets of $\X$.
\cref{prop:ground_states} below states that ground states are eigenvectors in the sense of \cref{def:eigenvectors} with minimal eigenvalue \citep{benning2012ground}.
Defining eigenvectors and ground states as tuples prevents minimizers of $\func$ from being trivial ground states.
We call the quotient appearing in \labelcref{eq:ground_states} Rayleigh quotient since it is a generalization of the standard Rayleigh quotient ${\langle u,Au\rangle}/{\norm{u}^2}$ from linear algebra.

While \cref{def:ground_states} and \labelcref{eq:gen_ev_prob} are quite abstract, they cover most interesting nonlinear eigenvalue problems from different fields.
To give an example from physics, for the energy $\func(u)=\int_{\R^d}\frac{1}{2}\abs{\nabla u}^2-\frac{1}{p+2}\abs{u}^{p+2}\d x$ on $\X=L^2(\R^d)$ the eigenvalue problem \labelcref{eq:gen_ev_prob} is characterized by $\hat{u}_*=0$ and $u_*$ solving the nonlinear Schrödinger equation \citep{weinstein1982nonlinear} $\lambda u = -\Delta u - \abs{u}^{p}u$.

In this chapter we explain how nonlinear eigenvectors and ground states can be computed.
Note that these problems are hard to solve for multiple reasons. 
Firstly, the eigenvalues in \labelcref{eq:gen_ev_prob} are unknown, secondly, the saddle point problem \labelcref{eq:ground_states} is neither convex in $u$ nor concave in $\hat{u}$, and lastly the functional $\func$ and the Banach space norm $\norm{\cdot}$ are non-smooth in interesting applications.
However, thanks to the variational structure of \cref{def:eigenvectors,def:ground_states} it turns out that gradient flows and their time discretizations are a useful tool.
 
Gradient flows are ubiquitous in mathematics and the natural sciences and in a Hilbert space setting \citep{brezis1973ope,aubin2012differential} they take the form
\begin{align}\label{eq:hilbert_gradflow}
    u'(t) + \partial \func(u(t)) \ni 0,
\end{align}
In the context of the nonlinear eigenvalue problem \labelcref{eq:gen_ev_prob}, we have to {work rather with} gradient flows in a Banach space.
To do this, one observes that the subdifferential \labelcref{eq:subdiff} naturally lives in the dual space and hence one has to ``push'' the time derivative to the dual space. 
This can be achieved by the duality mapping $\Phi_\X^p = \partial\frac{1}{p}\norm{\cdot}^p$ and leads to the equation
\begin{align}\label{eq:p-flow}
    \Phi_\X^p(u'(t)) + \partial\func(u(t)) \ni 0,
\end{align}
which is referred to as $p$-gradient flow or short $p$-flow.
More generally, one could replace $\Phi_\X^p(u')$ by a functional of the form $\partial_{u'}\mathcal{D}(u,u')$ \citep{mielke2016evolutionary}.
The equation \labelcref{eq:p-flow} is a special case of a metric gradient flow, for which a comprehensive theory is laid out in the monograph by \citet{ambrosio2008gradient}.

The relation between the gradient flow \labelcref{eq:p-flow} and the nonlinear eigenvalue problem \labelcref{eq:gen_ev_prob} is not completely obvious although they share structural similarities.
The simplest example for relations between gradient flows and ground states is the linear heat equation $u'(t)=\Delta u(t)$, posed on a bounded domain with Dirichlet boundary conditions. 
Expanding the solution in terms of an eigenbasis of the Laplacian operator one can show that if the initial datum satisfies $u(0)\geq 0$, then the normalized heat flow converges to an Laplacian eigenfunction:
\begin{align*}
    \lim_{t\to\infty}\norm{\frac{u(t)}{\norm{u(t)}_{L^2}} - w}_{L^2} = 0,
    \quad
    \text{where}
    \quad
    \lambda w = -\Delta w.
\end{align*}
The limit $w$ is referred to as \emph{asymptotic profile} or \emph{extinction profile} and describes the shape of the solution just before it converges (or extincts).
The existence of asymptotic profiles which are eigenvectors of the spatial differential operator has been proved for numerous (non)linear PDEs, including the total variation flow and $p$-Laplacian evolution equations \citep{bonforte2012total,andreu2002some,andreu2004parabolic,ghidaglia1991exact,kamin1988fundamental,portilheiro2013degenerate,vazquez2018asymptotic}, porous medium \citep{stan2018porous,vazquez2007porous} and fast diffusion equations \citep{bonforte2012behaviour,bonforte2020sharp}.

These results were generalized to the abstract equations \labelcref{eq:hilbert_gradflow} and \labelcref{eq:p-flow} by \citet{bungert2019nonlinear,bungert2019asymptotic,Bungert2020,hynd2017approximation,varvaruca2004exact} where asymptotic profiles where shown to satisfy \labelcref{eq:gen_ev_prob} or its Hilbertian version $\lambda u\in\partial\func(u)$.
A different branch of research \citep{feld2019rayleigh,aujol2018theoretical,cohen2018energy,nossek2018flows} studied Rayleigh quotient minimizing flows arising as gradient flows of the Rayleigh quotient \revise{$R(u) = {\func(u)}/{\norm{u}^p}$ for $u\neq 0$.}
The flows work well in practice and solve the same nonlinear eigenvalue problems at convergence, however, their rigorous mathematical analysis is more challenging. 
A comprehensive overview of these methods is given in the chapter by \citet{gilboa2020iterative}.
From a numerical perspective two major questions arise: 
\begin{itemize}
    \item Do also the time discretizations of all these flows, in particular of \labelcref{eq:p-flow}, converge to solutions of the eigenvalue problems \labelcref{eq:gen_ev_prob}?
    \item Can the eigenvalue problem~\labelcref{eq:gen_ev_prob} be approximated through discretization?
\end{itemize}
The first question was partially answered by \citet{Bungert2020,bungert2020nonlinear}, where it was observed that a time-discretization of normalized gradient flows yields a power method for the proximal operator of the energy functional, whose convergence was proved for absolutely one-homogeneous functionals on Hilbert spaces.
The second question was addressed for a special case by \citet{roith2020continuum}, where it was proved that $\Gamma$-convergence of a sequence of $L^\infty$-type functionals on weighted graphs implies convergence of their ground states, i.e., eigenvectors with minimal eigenvalue.

In this chapter we provide a comprehensive theory for the two questions  raised above. 
In \cref{sec:rayleigh} we first show that both gradient flows of general convex functionals and their implicit time discretization possess the astonishing property that they decrease the nonlinear Rayleigh quotient in \labelcref{eq:ground_states}.
\cref{sec:flows} then reviews several flows from the literature, which were proposed to solve nonlinear eigenvalue problems associated to homogeneous functionals. 
In particular, we show that all these flows are equivalent to a renormalization of the gradient flow.
In \cref{sec:power_method} we then thoroughly analyze a nonlinear power method based on the proximal operator, naturally arising as time-discretization of the flows from \cref{sec:flows}.
We prove convergence of the power method to a nonlinear eigenvector, show angular convergence, and prove that the power method preserves positivity.
In \cref{sec:gamma-cvgc} we then answer the second question by proving that $\Gamma$-convergence of general convex functionals implies convergence of the corresponding ground states.
We illustrate our theoretical findings in \cref{sec:applications} where we show numerical results of proximal power iterations, study their numerical convergence, and visualize the convergence of eigenvectors under $\Gamma$-convergence. 
Our numerical experiments are carried out both on regular grids and on general weighted graphs.

The appendix contains some of our proofs and provides further details on the phenomena of exact reconstruction and finite extinction which can occur in the evaluation of proximal operators and have to be taken into account for well-defined proximal power methods.

\section{Convex Analysis and Non\-lin\-ear Eigen\-val\-ue Problems}
\label{sec:preliminaries}

If $(\X,\norm{\cdot})$ is a Banach space we denote the dual space of $\X$ by $(\X^*,\norm{\cdot}_*)$ where 
\begin{align}
    \norm{\sg}_*&=\sup_{\substack{u\in\X\\\norm{u}=1}}\left\langle \sg,u\right\rangle,\quad\sg\in\X^*
\end{align}
is the dual norm.
\revise{For $p\in[1,\infty)$} the so-called $p$-proximal operator of a convex functional $\func:\X\to(-\infty,\infty]$ is defined as \revise{the set-valued function}
\begin{align}\label{eq:p-prox}
    \pprox_{\tau\func}(f) = \argmin_{u\in\X} \frac{1}{p}\norm{u-f}^p + \tau\func(u),\quad \tau \geq 0,
\end{align}
where the minimization problem has a solution under suitable conditions on the functional~$\func$.
\revise{For $\X=\H$ being a Hilbert space and $\func$ being convex, the $p$-proximal coincides with the standard proximal operator which is single-valued.}
The $p$-proximal operator has strong relations with the duality map of $\X$, given by
\begin{align}
    \Phi^p_\X(u):=
    \begin{cases}
    \{\sg \in \X^* \st \langle\sg,u\rangle=\norm{\sg}_*\norm{u},\,\norm{\sg}_*=\norm{u}^{p-1}\},\quad&p>1,\\
    \{\sg \in \X^* \st \langle\sg,u\rangle=\norm{u},\, \norm{\sg}_*\leq 1\},\quad&p=1. \\
    \end{cases}
\end{align}
Note that the two definitions differ only in the case $u=0$ where the duality map coincides with $\{0\}$ for $p>1$ and with $\{\sg\in\X^*\st\norm{\sg}\leq 1\}$ for $p=1$.
The following proposition collects some important properties of the duality map and the $p$-proximal operator.
\revise{This operator is in fact a special case of a larger class of proximity operators on Banach spaces studied by \citet{penot1998proximal,combettes2013moreau,wexler1973prox}.}
\begin{prop}[Properties of duality map and $p$-proximal]\label{prop:duality_proximal}
It holds that
\begin{itemize}
    \item $\Phi^p_\X(u)\neq \emptyset$ for all $u\in\X$,
    \item $\Phi^p_\X$ is a $p-1$-homogeneous map in the sense that
    \begin{align}
        \Phi^p_\X(cu) = c|c|^{p-2}\Phi^p_\X(u), \qquad\forall u\in\X,\; c\in\R,
    \end{align}
    \item $\Phi^p_\X = \partial\left(\frac{1}{p}\norm{\cdot}^p\right)$, where $\partial$ denotes the subdifferential,
    \item $u\in\pprox_{\tau\func}(f)$ if and only if
    \begin{align}\label{eq:OC}
        \exists\; \eta\in\Phi^p_\X(u-f),\;\sg\in\partial \func(u)\st 0=\eta+\tau \sg.
    \end{align}
\end{itemize}
\end{prop}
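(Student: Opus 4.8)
The plan is to handle the four items in order, using elementary arguments for the first two and convex conjugation for the last two. Nonemptiness is a direct consequence of Hahn--Banach: for $u\neq 0$ pick a norming functional $\sg_0\in\X^*$ with $\norm{\sg_0}_*=1$ and $\langle\sg_0,u\rangle=\norm{u}$, and note that $\norm{u}^{p-1}\sg_0\in\Phi^p_\X(u)$ for $p>1$ and $\sg_0\in\Phi^1_\X(u)$ for $p=1$; the case $u=0$ is immediate. (Once the third item is available one may instead argue that $\tfrac1p\norm{\cdot}^p$ is everywhere finite and continuous, hence subdifferentiable everywhere.) For the $(p-1)$-homogeneity I would, for $c\neq 0$, substitute the pair $(c\abs{c}^{p-2}\sg,\,cu)$ into the defining relations of $\Phi^p_\X$ and check that they hold, using $\norm{cu}=\abs{c}\norm{u}$ and $\norm{c\abs{c}^{p-2}\sg}_*=\abs{c}^{p-1}\norm{\sg}_*$; the reverse inclusion follows by repeating the computation with $c$ replaced by $1/c$, and $c=0$ is trivial.

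The identity $\Phi^p_\X=\partial\!\left(\tfrac1p\norm{\cdot}^p\right)$ is the crux. Set $j_p:=\tfrac1p\norm{\cdot}^p$. I would first compute its Fenchel conjugate: maximizing the linear part over directions reduces $j_p^*(\sg)=\sup_{u}\langle\sg,u\rangle-j_p(u)$ to the scalar problem $\sup_{r\ge 0}\bigl(r\norm{\sg}_*-\tfrac1p r^p\bigr)$, which equals $\tfrac1{p'}\norm{\sg}_*^{p'}$ for $p>1$ (with $\tfrac1p+\tfrac1{p'}=1$) and the indicator of $\{\norm{\cdot}_*\le 1\}$ for $p=1$. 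Then, by the Fenchel--Young equality, $\sg\in\partial j_p(u)$ iff $j_p(u)+j_p^*(\sg)=\langle\sg,u\rangle$. Comparing this with the chain $\langle\sg,u\rangle\le\norm{\sg}_*\norm{u}\le\tfrac1p\norm{u}^p+\tfrac1{p'}\norm{\sg}_*^{p'}$, whose first step is the duality of the norm and whose second step is Young's inequality for scalars, one sees that the Fenchel--Young equality forces equality in both steps: $\langle\sg,u\rangle=\norm{\sg}_*\norm{u}$ and $\norm{u}^p=\norm{\sg}_*^{p'}$, the latter being $\norm{\sg}_*=\norm{u}^{p-1}$ since $p/p'=p-1$. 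This is exactly the definition of $\Phi^p_\X(u)$, and the $p=1$ case is read off directly (the indicator forces $\norm{\sg}_*\le 1$ and leaves $\langle\sg,u\rangle=\norm{u}$). Proceeding via the conjugate, rather than verifying two inclusions by hand, has the advantage of covering the degenerate case $u=0$ uniformly.

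The last item then follows from Fermat's rule together with the subdifferential sum and translation rules. By definition $u\in\pprox_{\tau\func}(f)$ means $u$ minimizes $v\mapsto\tfrac1p\norm{v-f}^p+\tau\func(v)$, that is $0\in\partial\!\bigl(\tfrac1p\norm{\cdot-f}^p+\tau\func\bigr)(u)$. As $\tfrac1p\norm{\cdot-f}^p$ is everywhere finite and continuous, the Moreau--Rockafellar sum rule applies and the subdifferential splits; combining translation invariance of the subdifferential with the third item gives $\partial\!\bigl(\tfrac1p\norm{\cdot-f}^p\bigr)(u)=\Phi^p_\X(u-f)$, so $0\in\Phi^p_\X(u-f)+\tau\,\partial\func(u)$, which is exactly the claimed existence of $\eta$ and $\sg$ with $\eta+\tau\sg=0$. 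The only real obstacle is the third item --- extracting $\norm{\sg}_*=\norm{u}^{p-1}$ from the equality case of Young's inequality and keeping the $u=0$ and $p=1$ boundary cases consistent --- while in the last item one need only note that the qualification condition for the sum rule holds because the norm-power term is continuous.
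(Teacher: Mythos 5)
Your proof is correct, but there is nothing in the paper to compare it against: the authors state this proposition without proof, treating all four items as standard facts (the $p$-proximal operator is attributed to the references cited just before the proposition, and the identity $\Phi^p_\X=\partial\bigl(\tfrac1p\norm{\cdot}^p\bigr)$ is the classical Asplund-type characterization of duality mappings). Your route supplies exactly the standard argument one would expect: Hahn--Banach norming functionals for nonemptiness, direct substitution for the homogeneity, computation of the conjugate $j_p^*$ plus the equality cases of the norm duality and Young's inequality for the subdifferential identity, and Fermat's rule with the Moreau--Rockafellar sum rule (applicable since the norm-power term is finite and continuous everywhere while $\func$ is proper) for the optimality condition \labelcref{eq:OC}. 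You also handle the boundary cases correctly, in particular why the two branches of the definition of $\Phi^p_\X$ differ only at $u=0$. The single cosmetic caveat is the homogeneity identity at $c=0$ when $1<p<2$, where $c\abs{c}^{p-2}$ must be read as $0$ by convention; that is an imprecision of the statement itself, not of your argument.
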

Many statements in this chapter concern ground states of a functional $\func$, see \cref{def:ground_states}.
As already mentioned in the introduction, ground states of convex functionals are solutions to a doubly nonlinear eigenvalue problem involving the duality map and the subdifferential of~$\func$, as the following proposition states, the proof of which can be found in the appendix.
\begin{prop}\label{prop:ground_states}
Let $\func:\X\to(-\infty,\infty]$ be convex and proper.
Let $(u_*,\hat{u}_*)\in\X\times\argmin\func$ be a $p$-ground state and define $\lambda_p:=p\frac{\func(u_*)-\func(\hat{u}_*)}{\norm{u_*-\hat{u}_*}^p}$.
Then it holds
\begin{align}
    \lambda_p\Phi_\X^p(\hat{u}_*-u_*) + \partial\func(u_*) \ni 0.
\end{align}
\end{prop}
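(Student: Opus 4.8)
The plan is to eliminate the quotient and reduce to an unconstrained first-order condition. Fix the second component $\hat u_*$ in \cref{def:ground_states}: then $u_*$ minimizes $u\mapsto\frac{\func(u)-\func(\hat u_*)}{\norm{u-\hat u_*}^p}$ over $\X$, and its minimal value is $c:=\lambda_p/p$, which is nonnegative since $\hat u_*\in\argmin\func$. I would first note that this minimality is equivalent to the global inequality $\func(u)-\func(\hat u_*)\ge c\,\norm{u-\hat u_*}^p$ for all $u\in\X$ (trivial when $\func(u)=+\infty$ or $u=\hat u_*$, and a rearrangement of the Rayleigh-quotient bound otherwise), with equality at $u=u_*$. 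Hence $u_*$ is a global minimizer over $\X$ of the generally nonconvex functional $F(u):=\func(u)-c\,\norm{u-\hat u_*}^p$.

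Next I would read off the optimality condition. Since $g(u):=c\,\norm{u-\hat u_*}^p$ is convex and finite on $\X$, hence continuous, $\partial g(u_*)\neq\emptyset$; pick $\eta\in\partial g(u_*)$. Adding the subgradient inequality $g(u)\ge g(u_*)+\langle\eta,u-u_*\rangle$ to the global minimality $F(u)\ge F(u_*)$ gives $\func(u)\ge\func(u_*)+\langle\eta,u-u_*\rangle$ for every $u\in\X$, that is, $\eta\in\partial\func(u_*)$. By \cref{prop:duality_proximal} ($\Phi_\X^p=\partial\tfrac1p\norm{\cdot}^p$) together with translation and nonnegative scaling of subdifferentials, $\partial g(u_*)=cp\,\Phi_\X^p(u_*-\hat u_*)=\lambda_p\,\Phi_\X^p(u_*-\hat u_*)$, and the $(p-1)$-homogeneity of $\Phi_\X^p$ applied with the scalar $-1$ gives $\Phi_\X^p(u_*-\hat u_*)=-\Phi_\X^p(\hat u_*-u_*)$. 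Therefore $\eta=-\lambda_p\zeta$ for some $\zeta\in\Phi_\X^p(\hat u_*-u_*)$, and since $\eta\in\partial\func(u_*)$ we obtain $\lambda_p\zeta+\eta=0$, i.e.\ $0\in\lambda_p\Phi_\X^p(\hat u_*-u_*)+\partial\func(u_*)$, as claimed.

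The main obstacle is the step extracting a first-order condition from minimizing the \emph{difference of convex functions} $F=\func-g$: one cannot differentiate such a difference termwise, and the argument goes through only because the subtracted term $g$ is convex, so that one of its subgradients at $u_*$ is a genuine global affine minorant which can be transferred to $\func$. One also has to ensure $\partial g(u_*)\neq\emptyset$, which holds because $\norm{\cdot}^p$ is continuous on a Banach space; the degenerate cases $u_*=\hat u_*$ (excluded, since the Rayleigh quotient must be well-defined) and $c=0$ (then $u_*\in\argmin\func$ and $\lambda_p=0$, so the assertion is immediate) are harmless. Note that the outer $\argmax$ over $\hat u\in\argmin\func$ in \cref{def:ground_states} plays no role here — only the inner minimization over $u$ is used.
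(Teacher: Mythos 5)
Your proof is correct and is essentially the paper's argument in a different packaging: the paper picks $\eta\in\Phi_\X^p(\hat u_*-u_*)$ and verifies $-\lambda_p\eta\in\partial\func(u_*)$ by chaining the subgradient inequality for $\tfrac1p\norm{\cdot}^p$ with the Rayleigh-quotient bound $\func(u)-\func(\hat u_*)\ge\tfrac{\lambda_p}{p}\norm{u-\hat u_*}^p$, which is exactly the content of your step transferring a subgradient of the convex minorant $g$ to $\func$ at a global minimizer of $\func-g$. Your DC-programming framing and your explicit handling of the degenerate cases ($c=0$, $u_*=\hat u_*$) are fine additions but do not change the substance.
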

Note that our definition of ground states and the relation to the nonlinear eigenvalue problem \labelcref{eq:gen_ev_prob} is valid for general convex functionals and does not require any homogeneity, which is typically assumed for nonlinear eigenvalue problems.
Still, some of our results are only valid for \emph{absolutely $\alpha$-homogeneous} functionals for some degree $\alpha\in[1,\infty)$, which by definition satisfy
\begin{align}\label{eq:alpha-hom}
    \func(cu) &= |c|^\alpha\func(u),\quad \forall u \in \dom\func.
\end{align}
Important examples for such functionals are 
\begin{alignat*}{3}
    \func(u)&=\sup_{\substack{\norm{\phi}_\infty=1}}\int_\Omega u \, \mathrm{div} \phi \,\d x, \quad&&\text{the total variation,}&&\alpha=1,\\
    \func(u)&=\int_\Omega|\nabla u|^p\d x, &&\text{the $p$-Dirichlet energy,}\quad&&\alpha=p.
\end{alignat*}
For instance, for the $p$-Dirichlet energy with homogeneous Dirichlet boundary conditions the abstract eigenvalue problem \labelcref{eq:gen_ev_prob} becomes the $p$-Laplacian eigenvalue problem
\begin{align*}
    \lambda_p|u|^{p-2}u = -\Delta_p u.
\end{align*}
For convex $\alpha$-homogeneous functionals we define their nullspace as
\begin{align}\label{eq:nullspace}
    \calN(\func):=\left\lbrace u \in \X \st \func(u) = 0\right\rbrace.
\end{align}
Because of the convexity and the homogeneity this is indeed a vector space, and for lower semicontinuous functionals it is additionally closed.

In the following we state the assumptions which we use in different parts of this chapter. 
For a compact presentation we make the following fundamental assumption which we assume without further notice and reference.
\begin{ass*}
The functional $\func:\X\to(-\infty,\infty]$ is proper, convex, and lower semicontinuous.
\end{ass*}
For studying convergence of rescaled gradient flows and proximal power methods to nonlinear eigenvectors in \cref{sec:flows} and \cref{sec:power_method} we need the following assumption.
\begin{ass}\label{ass:compactness}
The sub-level sets of $\norm{\cdot}+\func(\cdot)$ are relatively compact, in the sense that every sequence $(u_n)\subset\X$ such that $\sup_{n\in\N}\norm{u_n}+\func(u_n)<\infty$ admits a subsequence (which we do not relabel) and an element $u\in\X$ such that $\norm{u_n-u}\to 0$ as $n\to\infty$.
\end{ass}
In \cref{sec:power_method} we pose an assumption concerning absolutely $\alpha$-homogeneous functionals which demands that their nullspace be trivial.
We will see that this is no restriction of generality and makes the presentation more concise.
\begin{ass}\label{ass:hom_nullspace}
The functional $\func:\X\to(-\infty,\infty]$ is absolutely $\alpha$-homogeneous for some $\alpha\in[1,\infty)$ and satisfies $\calN(\func)=\{0\}$.
\end{ass}

\section{Gradient Flows and Decrease of Rayleigh Quotients}
\label{sec:rayleigh}

This section is dedicated to proving that under general conditions the $p$-flow \labelcref{eq:p-flow} and its implicit Euler discretization in time, often referred to as \emph{minimizing movement scheme}, both admit the astonishing property that they do not only decrease the energy $\func$ but also the nonlinear Rayleigh quotient in \labelcref{eq:ground_states}.
This property was already observed for the continuous-time gradient flows of homogeneous functionals, e.g., by \citet{ghidaglia1991exact,bungert2019asymptotic}.
However, for general convex functionals and for the time-discrete flows this statement appears to be novel.
Before we turn to the proofs we recap some basic facts about the $p$-flow and the associated minimizing movement scheme.

\subsection{The \texorpdfstring{$p$}{p}-Gradient Flow and Minimizing Movements}

The equation for a large part of this chapter is the $p$-gradient flow \labelcref{eq:p-flow} of a functional $\func$ on a Banach space $\X$ with initial datum $f\in\X$, which we repeat here for convenience:
\begin{align*}
    \Phi_\X^p(u'(t)) + \partial\func(u(t)) \ni 0,\qquad u(0)=f.
\end{align*}
It is a  doubly-nonlinear generalization of Hilbertian gradient flows~\labelcref{eq:hilbert_gradflow}, which are recovered by choosing $\X$ as a Hilbert space and $p=2$.
The right concept of solutions for the $p$-flow is the one of $p$-curves of maximal slope:
\begin{definition}
Let $1<p<\infty$. A $p$-curve of maximal slope for $\func$ is an absolutely continuous map $u:[0,\infty)\to\X$ that satisfies
\begin{align}\label{ineq:maxim_slope}
    \frac{\d}{\d t}\func(u(t)) \leq -\frac{1}{p}\abs{u'}^p(t) - \frac{1}{q}\abs{\partial\func}^q(u(t)),\quad a.e.\, t>0,
\end{align}
where $\abs{u'}(t):=\lim_{h\to 0}\frac{\norm{u(t+h)-u(t)}}{|h|}$ denotes the \emph{metric derivative} of $u$ in $t>0$ and $\abs{\partial\func}(u):=\inf\left\lbrace\norm{\sg}_*\st\sg\in\partial\func(u)\right\rbrace$ denotes the \emph{local slope} of $\func$ in $u\in\X$.
\end{definition}
Note that the opposite inequality to \labelcref{ineq:maxim_slope} is always satisfied thanks to the Cauchy-Schwarz and Young inequalities. 
This enables one to show that $p$-curves of maximal slope are in one-to-one correspondence to almost everywhere differentiable solutions of \labelcref{eq:p-flow}, see \citet{hynd2017approximation}.
Existence of solutions is discussed, e.g., by \citet{ambrosio2008gradient,santambrogio2017euclidean} (see also \citet{stefanelli2021new} for a new approach) but in this chapter we rather focus on qualitative properties of the flows.

For proving existence, most approaches use the concept of minimizing movements \citep{de1993new,ambrosio1995minimizing,ambrosio2008gradient}, which is an implicit Euler discretization in time of the $p$-flow \labelcref{eq:p-flow}.
Besides being a theoretical tool, it also serves as a numerical scheme for approximating solutions. 
The scheme arises by discretizing the values of $u(t)$ one discrete time steps $k\tau$ where $k\in\N_0$ and $\tau>0$ is a time step size.
Hence, the implicit discretization of \labelcref{eq:p-flow} takes the form
\begin{subequations}\label{eq:minimizing movements}
\begin{align}
    &\Phi_\X^p\left(\frac{u^{k+1}-u^k}{\tau}\right) + \partial\func(u^{k+1}) \ni 0, \\
    \iff
    &u^{k+1} \in \argmin_{u^\in\X} \frac{1}{p}\norm{u-u^k}^p + \tau^{p-1}\func(u), \\
    \iff 
    &u^{k+1} \in \pprox_{\tau^{p-1}\func}(u^k),
\end{align}
\end{subequations}
where we set $u^0:=f$.
As the time step size $\tau$ tends to zero, under generic conditions a time interpolation of the sequence generated by~\labelcref{eq:minimizing movements} converges to a curve of maximal slope which is a solution of \labelcref{eq:p-flow} (see, e.g., \citet{ambrosio2008gradient,santambrogio2017euclidean}).
This yields both existence of solutions and a numerical method to compute them.

\subsection{Decrease of the Rayleigh Quotients}
First, we show that the $p$-flow \labelcref{eq:p-flow} of a convex functional decreases the generalized Rayleigh quotient from~\labelcref{eq:ground_states}.

\begin{prop}\label{prop:decay_rayleigh_p-flow}
Let $u:[0,\infty)\to\X$ be an absolutely continuous solution of~\labelcref{eq:p-flow} and let $\func$ be convex.
Then it holds for all $\hat{u}\in\argmin\func$
\begin{align}
    \frac{\d}{\d t}\frac{\func(u(t))-\func(\hat{u})}{\norm{u(t)-\hat{u}}} 
    \leq 0,\quad\forall 0<t<\infty.
\end{align}
If $\func$ is absolutely $\alpha$-homogeneous for $\alpha\in[1,\infty)$, it even holds
\begin{align}\label{ineq:decrease_rayleigh_p}
    \frac{\d}{\d t}\frac{\alpha\func(u(t))}{\norm{u(t)-\hat{u}}^\alpha} 
    \leq 0,\quad\forall 0<t<\infty.
\end{align}
\end{prop}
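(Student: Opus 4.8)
The plan is to differentiate the quotient directly and reduce both claims to one elementary inequality between the derivatives of its numerator and denominator. Write $v(t):=u(t)-\hat u$, $N(t):=\norm{v(t)}$ and $E(t):=\func(u(t))-\func(\hat u)\ge 0$ (nonnegative since $\hat u\in\argmin\func$); we may assume $N(t)>0$, which is the only case in which the quotient is defined. Because $u$ solves \labelcref{eq:p-flow}, for a.e.\ $t$ there is a selection $\sg(t)\in\partial\func(u(t))$ with $-\sg(t)\in\Phi^p_\X(u'(t))$, and the defining properties of the duality map give $\langle\sg(t),u'(t)\rangle=-\norm{u'(t)}^p$ and $\norm{\sg(t)}_*=\norm{u'(t)}^{p-1}$. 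The chain rule for convex functionals along absolutely continuous curves — applicable since $\func\circ u$ is absolutely continuous along a solution of the flow, see \citet{ambrosio2008gradient} — then yields $E'(t)=\langle\sg(t),u'(t)\rangle=-\norm{u'(t)}^p$. Likewise $N$ is absolutely continuous (composition of the Lipschitz norm with $u$) and $N'(t)=\langle\xi(t),u'(t)\rangle$ for any $\xi(t)\in\partial\norm{\cdot}(v(t))=\Phi^1_\X(v(t))$, so $\abs{N'(t)}\le\norm{\xi(t)}_*\norm{u'(t)}\le\norm{u'(t)}$.

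For the first statement it suffices to show $E'N-EN'\le 0$. The key estimate is the subgradient inequality combined with duality: $E=\func(u)-\func(\hat u)\le\langle\sg,u-\hat u\rangle\le\norm{\sg}_*N=\norm{u'}^{p-1}N$. If $N'(t)\ge 0$ the claim is immediate from $E\ge 0$ and $E'=-\norm{u'}^p\le 0$. If $N'(t)<0$, then $-EN'=E\abs{N'}\le E\norm{u'}\le\norm{u'}^{p-1}N\cdot\norm{u'}=\norm{u'}^pN$, so $E'N-EN'=-\norm{u'}^pN+E\abs{N'}\le 0$; dividing by $N^2$ gives the first inequality.

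For the absolutely $\alpha$-homogeneous case note $\func(\hat u)=0$, so the quotient is $\alpha\func(u)/N^\alpha$ and, after differentiating, it suffices to show $E'N-\alpha E N'\le 0$ with $E(t)=\func(u(t))$. The previous estimate can now be sharpened: Euler's identity for homogeneous convex functionals gives $\langle\sg,u\rangle=\alpha\func(u)=\alpha E$, and $\langle\sg,\hat u\rangle=0$ because $\hat u$ lies in the nullspace, so $\alpha E=\langle\sg,u-\hat u\rangle\le\norm{\sg}_*N=\norm{u'}^{p-1}N$. With $\alpha E$ in place of $E$, the same case distinction on the sign of $N'$ closes the argument.

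The inputs on the duality map, Euler's identity, and the chain rule are routine; the one step deserving a short separate argument is $\langle\sg,\hat u\rangle=0$ for $\sg\in\partial\func(u)$ and $\hat u\in\calN(\func)$. I would first show $\func$ is invariant under translation by $\hat u$: for fixed $u$ the convex function $t\mapsto\func(u+t\hat u)=\abs{t}^\alpha\func(\hat u+t^{-1}u)$ is bounded above on $\R$ by $\abs{t}^\alpha\left(\tfrac12\func(2\hat u)+\tfrac12\func(2t^{-1}u)\right)=2^{\alpha-1}\func(u)$, hence constant; evaluating the subgradient inequality for $\func$ at $u$ in the directions $\pm\hat u$ then forces $\langle\sg,\hat u\rangle=0$. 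This translation-invariance is the only genuinely non-mechanical ingredient, and without it the homogeneous refinement of the key estimate would fail for $\alpha>1$.
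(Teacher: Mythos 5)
Your proof is correct and follows essentially the same route as the paper: differentiate the quotient, use $\frac{\d}{\d t}\func(u(t))=-\norm{u'(t)}^p$, the bound $\abs{\frac{\d}{\d t}\norm{u(t)-\hat u}}\leq\norm{u'(t)}$, the subgradient estimate $\func(u)-\func(\hat u)\leq\norm{\sg}_*\norm{u-\hat u}$ (sharpened to $\alpha\func(u)\leq\norm{\sg}_*\norm{u-\hat u}$ in the homogeneous case), and the duality-map identity $\norm{\sg}_*\norm{u'}=\norm{u'}^p$. The only difference is cosmetic: you derive the homogeneous estimate from Euler's identity and translation invariance along the nullspace, where the paper cites it from \citet{bungert2019asymptotic}, and your explicit case distinction on the sign of $\frac{\d}{\d t}\norm{u(t)-\hat u}$ replaces the paper's one-line bound $-E N'\leq E\abs{N'}$.
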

\begin{proof}
Using the triangle inequality it can be easily seen \citep{hynd2017approximation} that
\begin{align*}
    \left|\frac{\d}{\d t}\norm{u(t)-\hat{u}}\right| \leq \norm{u'(t)}.
\end{align*}
Furthermore, one has by properties of $p$-flows \citep{hynd2017approximation}
\begin{align*}
    \frac{\d}{\d t}\func(u(t)) = -\norm{u'(t)}^p.
\end{align*}
Furthermore, for all $\sg\in\partial\func(u(t))$ one has
\begin{align*}
    \func(u(t))-\func(\hat{u}) \leq \norm{\sg}_*\norm{u(t)-\hat{u}}.
\end{align*}
By definition of the $p$-flow, for every $t>0$ there exists $\eta(t)\in\Phi_\X^p(u'(t))$ and $\sg(t)\in\partial\func(u(t))$ such that $\eta(t)+\sg(t)=0$.
Putting these four statements together, one obtains
\begin{align*}
    \frac{\d}{\d t}\frac{\func(u(t))-\func(\hat{u})}{\norm{u(t)-\hat{u}}} 
    &= - \frac{\left(\func(u(t))-\func(\hat{u})\right)\frac{\d}{\d t}\norm{u(t)-\hat{u}}}{\norm{u(t)-\hat{u}}^2} + \frac{\frac{\d}{\d t}\func(u(t))}{\norm{u(t)-\hat{u}}}  \\
    &\leq \frac{1}{\norm{u(t)-\hat{u}}} \left[ \norm{\sg(t)}_*\norm{u'(t)} - \norm{u'(t)}^p \right] \\
    &= \frac{1}{\norm{u(t)-\hat{u}}} \left[ \norm{\eta(t)}_*\norm{u'(t)} - \norm{u'(t)}^p \right] \\
    &=\frac{1}{\norm{u(t)-\hat{u}}} \left[ \norm{u'(t)}^p - \norm{u'(t)}^p \right] =0,
\end{align*}
where we used the properties of the duality mapping $\Phi_\X^p$.

In the case that $\func$ is absolutely $\alpha$-homogeneous we can use \citep{bungert2019asymptotic} 
\begin{align*}
    \alpha\func(u(t)) \leq \norm{\sg}_*\norm{u(t)-\hat{u}},\quad\forall\sg\in\partial\func(u(t)),
\end{align*}
to compute
\begin{align*}
    \frac{\d}{\d t}\frac{\alpha\func(u(t))}{\norm{u(t)-\hat{u}}^\alpha} 
    &= - \frac{\alpha\func(u(t))\frac{\d}{\d t}\norm{u(t)-\hat{u}}^\alpha}{\norm{u(t)-\hat{u}}^{2\alpha}} + \frac{\alpha\frac{\d}{\d t}\func(u(t))}{\norm{u(t)-\hat{u}}^{\alpha}}  \\
    &= \frac{\alpha\func(u(t))\alpha \norm{u(t)-\hat{u}}^{\alpha-1}\norm{u'(t)}}{\norm{u(t)-\hat{u}}^{2\alpha}} - \frac{\alpha\norm{u'(t)}^p}{\norm{u(t)-\hat{u}}^{\alpha}}  \\
    &\leq \frac{\alpha}{\norm{u(t)-\hat{u}}^\alpha} \left[ \frac{\alpha\func(u(t))\norm{u'(t)}}{\norm{u(t)-\hat{u}}} - \norm{u'(t)}^p \right] \\
    &\leq  \frac{\alpha}{\norm{u(t)-\hat{u}}^\alpha} \left[ {\norm{\sg(t)}_*\norm{u'(t)}} - \norm{u'(t)}^p \right] \leq 0,
\end{align*}
where the last inequality follows again from $-\sg(t)=\eta(t)\in\Phi_\X^p(u'(t))$.
\end{proof}
Next, we show that also the sequence $(u^{k})_{k\in\N}\subset\X$ generated by the minimizing movement scheme \labelcref{eq:minimizing movements} decreases the Rayleigh quotient in the precise same way, which is an important consistency property for this scheme.
\begin{prop}\label{prop:decrease_rayleigh}
Let $\tau>0$, $u^{k+1}\in\pprox_{\tau\func}(u^k)$, and $\hat{u}\in\argmin \func$.
Then it holds that 
\begin{align}\label{ineq:decrease_rayleigh}
    \frac{\func(u^{k+1})-\func(\hat{u})}{\norm{u^{k+1}-\hat{u}}}\leq\frac{\func(u^k)-\func(\hat{u})}{\norm{u^k-\hat{u}}}
\end{align}
with the convention $\frac00:=0$. 
If $\func$ is absolutely $\alpha$-homogeneous for $\alpha\in[1,\infty)$, it even holds
\begin{align}\label{ineq:decrease_rayleigh_p_discrete}
    \frac{\alpha\func(u^{k+1})}{\norm{u^{k+1}-\hat{u}}^\alpha}\leq\frac{\alpha\func(u^k)}{\norm{u^k-\hat{u}}^\alpha},\quad \forall \hat{u} \in \calN(J).
\end{align}
\end{prop}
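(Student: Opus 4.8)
My plan is to prove the discrete statement more directly than its continuous counterpart \cref{prop:decay_rayleigh_p-flow}: rather than imitating the quotient-rule computation with a finite difference (which, in the $\alpha$-homogeneous case, is spoiled by the convexity of $t\mapsto t^\alpha$), I would exploit the \emph{full} minimality of $u^{k+1}$ in \labelcref{eq:p-prox} — not merely the optimality condition \labelcref{eq:OC} — by inserting one carefully rescaled competitor. Write $r:=\norm{u^{k+1}-\hat u}$, $s:=\norm{u^k-\hat u}$, $d:=\norm{u^{k+1}-u^k}$, and recall the reverse triangle inequality $\abs{s-r}\le d$. The degenerate cases are quickly dispatched: if $r=0$ then $u^{k+1}=\hat u$, all numerators vanish and the left-hand sides are $0$; if $\func(u^k)=\infty$ the right-hand sides are $\infty$; and if $s=0$, that is $u^k=\hat u$, then testing \labelcref{eq:p-prox} with $v=\hat u$ gives $\tfrac1p\norm{u^{k+1}-\hat u}^p\le\tau(\func(\hat u)-\func(u^{k+1}))\le0$ (using $\hat u\in\argmin\func$), so $u^{k+1}=\hat u$ and again $r=0$. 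Hence I may assume $r,s>0$ and $\func(u^k)<\infty$, and set $\theta:=\min(r/s,1)\in(0,1]$ and $v:=\hat u+\theta(u^k-\hat u)=\theta u^k+(1-\theta)\hat u$.

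The point of this choice is that $v$ is exactly as far from $u^k$ as the triangle inequality permits: $\norm{v-u^k}=(1-\theta)s$, which equals $s-r\le d$ when $\theta=r/s$ and equals $0\le d$ when $\theta=1$, so in both cases $\norm{v-u^k}^p\le d^p$. Combining this with the minimality of $u^{k+1}$, the two $\tfrac1p\norm{\cdot}^p$-terms drop out and I am left with $\func(u^{k+1})\le\func(v)$. For \labelcref{ineq:decrease_rayleigh} I would then bound $\func(v)\le\theta\func(u^k)+(1-\theta)\func(\hat u)$ by convexity, so $\func(u^{k+1})-\func(\hat u)\le\theta\,(\func(u^k)-\func(\hat u))$; since $\hat u\in\argmin\func$ the factor $\func(u^k)-\func(\hat u)$ is nonnegative and $\theta\le r/s$, and dividing by $r$ yields \labelcref{ineq:decrease_rayleigh}.

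For \labelcref{ineq:decrease_rayleigh_p_discrete} (now $\func(\hat u)=0$ since $\hat u\in\calN(\func)$) the convexity bound $\func(v)\le\theta\func(u^k)$ is too weak, because I need $\func(v)\le\theta^\alpha\func(u^k)$ and $\theta^\alpha\le\theta$ on $(0,1]$. This is where homogeneity enters, through the fact that an absolutely $\alpha$-homogeneous convex functional is invariant under translations by its nullspace: for $n\in\calN(\func)$ the map $t\mapsto\func(u+tn)$ is convex, finite on $\R$, and bounded above (by convexity and homogeneity, $\func(u+tn)\le2^{\alpha-1}(\func(u)+\abs t^\alpha\func(n))=2^{\alpha-1}\func(u)$ since $\func(n)=0$), hence constant, so $\func(u+n)=\func(u)$. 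Applying this with $n=\pm\hat u$ and using $\alpha$-homogeneity gives $\func(v)=\func(\theta(u^k-\hat u))=\theta^\alpha\func(u^k)$. The same minimality argument as above then gives $\func(u^{k+1})\le\theta^\alpha\func(u^k)\le(r/s)^\alpha\func(u^k)$ (using $\theta\le r/s$ and $\func\ge0$), and multiplying by $\alpha$ and dividing by $r^\alpha$ produces \labelcref{ineq:decrease_rayleigh_p_discrete}.

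I expect the only genuinely non-obvious step to be the choice of competitor: rescaling $u^k$ toward $\hat u$ so that $\norm{v-\hat u}=\norm{u^{k+1}-\hat u}$ is precisely what makes the $p$-th-power terms cancel and collapses the whole estimate to a one-line consequence of convexity (in the general case) or of homogeneity together with nullspace-translation invariance (in the homogeneous case). Everything else — the reverse triangle inequality, the degenerate cases, and recording the translation-invariance lemma — is routine.
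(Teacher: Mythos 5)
Your proof is correct and is essentially the paper's own argument: the competitor $v=\hat u+\theta(u^k-\hat u)$ with $\theta=\min(r/s,1)$ merges the paper's two cases ($v=u^k$ when $r\ge s$, the rescaled point when $r\le s$) into one, and the remaining steps — full minimality killing the $p$-th-power terms, convexity in the general case, nullspace-translation invariance plus homogeneity in the homogeneous case — are identical. Your explicit treatment of the degenerate cases and your self-contained proof of translation invariance (which the paper delegates to a citation) are welcome refinements but do not change the route.
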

\begin{proof}
Let us fix $\hat{u}\in\argmin\func$.
From the minimality of $u^{k+1}$ we infer
\begin{align}\label{ineq:optimality}
    \frac{1}{p}\norm{u^{k+1}-u^k}^p+\tau\func(u^{k+1}) \leq \frac{1}{p}\norm{v-u^k}^p+\tau\func(v),\quad \forall v\in\X.
\end{align}
We now distinguish two cases, starting with the easier one.
\\
\textbf{Case 1:}
Here, we assume that $\norm{u^{k+1}-\hat{u}}\geq\norm{u^k-\hat{u}}$.
Choosing $v=u^k$ in \labelcref{ineq:optimality} one obtains $\func(u^{k+1})\leq\func(u^k)$.
We can combine these two estimates to
\begin{align*}
    \frac{\func(u^{k+1})-\func(\hat{u})}{\norm{u^{k+1}-\hat{u}}} \leq \frac{\func(u^{k})-\func(\hat{u})}{\norm{u^{k}-\hat{u}}}.
\end{align*}
\\
\textbf{Case 2:}
Now, we assume that $\norm{u^{k+1}-\hat{u}}\leq\norm{u^k-\hat{u}}$.
Choosing 
$$v:=\frac{\norm{u^{k+1}-\hat{u}}}{\norm{u^k-\hat{u}}}(u^k-\hat{u})+\hat{u},$$ 
which satisfies 
\begin{align*}
    \norm{v-u^k}&=\norm{\frac{\norm{u^{k+1}-\hat{u}}}{\norm{u^k-\hat{u}}}(u^k-\hat{u})-(u^k-\hat{u})} 
    = \left|\frac{\norm{u^{k+1}-\hat{u}}}{\norm{u^k-\hat{u}}}-1\right|\norm{u^k-\hat{u}} \\
    &= \left| \norm{u^{k+1}-\hat{u}} - \norm{u^k-\hat{u}} \right| 
    \leq \norm{u^{k+1}-u^k},
\end{align*}
we obtain
\begin{align*}
    \func(u^{k+1}) &\leq  \func(v) = \func\left(\frac{\norm{u^{k+1}-\hat{u}}}{\norm{u^k-\hat{u}}}(u^k-\hat{u})+\hat{u}\right) \\
    &= \func\left(\frac{\norm{u^{k+1}-\hat{u}}}{\norm{u^k-\hat{u}}}u^k + \left(1-\frac{\norm{u^{k+1}-\hat{u}}}{\norm{u^k-\hat{u}}}\right)\hat{u}\right).
\end{align*}
Since $\norm{u^{k+1}-\hat{u}}\leq\norm{u^k-\hat{u}}$, we obtain using the convexity of $\func$
\begin{align*}
    \func(u^{k+1}) &\leq \frac{\norm{u^{k+1}-\hat{u}}}{\norm{u^k-\hat{u}}}\func(u^k) + \left(1-\frac{\norm{u^{k+1}-\hat{u}}}{\norm{u^k-\hat{u}}}\right) \func(\hat{u}).
\end{align*}
This inequality can be reordered to~\labelcref{ineq:decrease_rayleigh}.

If $\func$ is absolutely $\alpha$-homogeneous, we can use that $\func(u^{k+1}+\hat{u})=\func(u^{k+1})$ for all $\hat{u}\in\argmin\func=\calN(J)$ (see~\citet{bungert2019asymptotic}) and obtain
\begin{align*}
    \func(u^{k+1}) &\leq \left(\frac{\norm{u^{k+1}-\hat{u}}}{\norm{u^k-\hat{u}}}\right)^\alpha\func(u^k),
\end{align*}
which can be reordered to~\labelcref{ineq:decrease_rayleigh_p_discrete}.
\end{proof}

\section{Flows for Solving Nonlinear Eigenproblems}
\label{sec:flows}

In this section we review several flows from the literature whose large time behavior has been brought into correspondence with ground states according to \cref{def:ground_states} or---more generally---nonlinear eigenfunctions solving~\labelcref{eq:gen_ev_prob}. 
The relation between gradient flows and eigenvectors of general subdifferential operators was studied by \citet{bungert2019asymptotic,bungert2019computing,bungert2019nonlinear,varvaruca2004exact,hynd2017approximation}, see also \citet{Bungert2020}.
Rayleigh quotient minimizing flows were investigated by \citet{feld2019rayleigh,cohen2018energy,aujol2018theoretical,nossek2018flows}, see also \citet{gilboa2020iterative,gilboa2018nonlinear}.
In the following we review these flows and discover that they have strong relations and are connected through time reparametrizations and normalizations.

The first three approaches below are all based on the $p$-flow \labelcref{eq:p-flow} or gradient flows on Hilbert spaces \labelcref{eq:hilbert_gradflow}.
The fourth class of flows we study here are Rayleigh quotient minimizing flows on Hilbert spaces which are seemingly different from gradient flows.
However, we show that they all arise as time rescalings of normalized gradient flows.
Note that all approaches utilize absolutely $\alpha$-homogeneous functionals (see \labelcref{eq:alpha-hom}).

\subsection{Rescaled \texorpdfstring{$p$}{p}-Flows}

The first approach was investigated by \citet{hynd2017approximation} and considers the $p$-flow \labelcref{eq:p-flow} of absolutely $p$-homogeneous functionals on Banach spaces, i.e., the homogeneity of the functional coincides with~$p$.
The authors showed that in this case solutions of the $p$-flow decay exponentially and that their exponential rescalings converge to a limit, which was characterized to be zero or a nonlinear eigenfunction in the sense of \labelcref{eq:gen_ev_prob}.

To set the scene, let $\func:\X \to (-\infty,\infty]$ be absolutely $p$-homogeneous and define
\begin{align}
    \lambda_p := \inf_{u\neq 0} \frac{p\func(u)}{\norm{u}^p}.
\end{align}
The value $\lambda_p$ is called simple if any two minimizers are linearly dependent. 
Furthermore, from \cref{prop:ground_states} we see that $\lambda_p$ is the smallest eigenvalue of the eigenvalue problem~\labelcref{eq:gen_ev_prob}.

\begin{thm}[Theorem 1.3 by \citet{hynd2017approximation}]
Let $p>1$ and $u:[0,\infty)\to\X$ be a solution of~\labelcref{eq:p-flow} with $\func$ being absolutely $p$-homogeneous.
Assume that $\lambda_p>0$ and simple and denote $\mu_p:=\lambda_p^\frac{1}{p-1}$. 
Then it holds
\begin{itemize}
    \item The limit $w:=\lim_{t\to\infty}e^{\mu_p t}u(t)$ exists.
    \item If $w\neq 0$, then $w$ is a $p$-ground state and it holds $\lambda_p=\lim_{t\to\infty}\frac{p\func(u(t))}{\norm{u(t)}^p}$.
\end{itemize}
\end{thm}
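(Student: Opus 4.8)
The plan is to extract from the absolute $p$-homogeneity of $\func$ a sharp exponential decay of $\func(u(t))$ and $\norm{u(t)}$, so that the rescaled curve $v(t):=e^{\mu_p t}u(t)$ is bounded, and then to promote this boundedness to genuine convergence by combining the compactness in \cref{ass:compactness} with the monotonicity of the Rayleigh quotient from \cref{prop:decay_rayleigh_p-flow}. First I would observe that we may assume $u(t)\neq 0$ for all $t$: otherwise $u$ vanishes identically after a finite time, so $w=0$ and both claims are trivial. For the decay, set $R(t):=p\func(u(t))/\norm{u(t)}^p$ and pick, as in \labelcref{eq:p-flow}, a selection $\sg(t)\in\partial\func(u(t))$ with $-\sg(t)\in\Phi_\X^p(u'(t))$, so that $\norm{\sg(t)}_*=\norm{u'(t)}^{p-1}$. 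Combining the dissipation identity $\frac{\d}{\d t}\func(u(t))=-\norm{u'(t)}^p$, the homogeneity estimate $p\func(u(t))\leq\norm{\sg(t)}_*\norm{u(t)}$ (already used in the proof of \cref{prop:decay_rayleigh_p-flow} with $\alpha=p$, $\hat u=0$), and $\norm{u(t)}^p=p\func(u(t))/R(t)$, I get $\norm{u'(t)}^p\geq pR(t)^{1/(p-1)}\func(u(t))$, hence $\frac{\d}{\d t}\func(u(t))\leq -pR(t)^{1/(p-1)}\func(u(t))$. Since $R$ is non-increasing by \cref{prop:decay_rayleigh_p-flow}, $R(t)\geq R_\infty:=\lim_{t\to\infty}R(t)\geq\lambda_p>0$, and Grönwall gives $\func(u(t))\leq\func(f)e^{-pR_\infty^{1/(p-1)}t}$ and $\norm{u(t)}^p\leq(p/R_\infty)\func(f)e^{-pR_\infty^{1/(p-1)}t}$. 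In particular, using only $R(t)\geq\lambda_p$, the curve $\norm{v(t)}$ is bounded, $\func(v(t))=e^{p\mu_p t}\func(u(t))$ is bounded by $\func(f)$, and differentiating it shows it is non-increasing, hence convergent to some $\ell\geq 0$.

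Next I would bring in compactness. Since $\sup_t\norm{v(t)}+\func(v(t))<\infty$, \cref{ass:compactness} makes the trajectory of the (norm-)continuous curve $v$ relatively compact, so its $\omega$-limit set $W$ is nonempty, compact and connected. From $\norm{v(t)}^p=p\func(v(t))/R(t)\to p\ell/R_\infty$ every point of $W$ has the same norm $\rho:=(p\ell/R_\infty)^{1/p}$. If $\rho=0$, then $W=\{0\}$ and relative compactness forces $v(t)\to 0$, so $w:=\lim_{t\to\infty}v(t)=0$ exists and both assertions hold (the second vacuously).

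The substantive case is $\rho>0$, and this is the step I expect to be the main obstacle, since here one must upgrade relative compactness of $v$ into an actual limit. I would first show $R_\infty=\lambda_p$: if $R_\infty>\lambda_p$ then $R_\infty^{1/(p-1)}>\mu_p$, and the decay bound above gives $\norm{v(t)}\leq C e^{(\mu_p-R_\infty^{1/(p-1)})t}\to 0$, contradicting $\rho>0$; consequently $\ell=\lambda_p\rho^p/p$. For any $w_0\in W$, taking $v(t_n)\to w_0$ along $t_n\to\infty$, lower semicontinuity yields $\func(w_0)\leq\liminf_n\func(v(t_n))=\ell=\lambda_p\norm{w_0}^p/p$, while the definition of $\lambda_p$ gives the reverse inequality; hence $p\func(w_0)/\norm{w_0}^p=\lambda_p$, i.e.\ $w_0\neq 0$ minimizes $\func(\cdot)/\norm{\cdot}^p$ and is a $p$-ground state by \cref{def:ground_states} (note $\argmin\func=\{0\}$ because $\lambda_p>0$). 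By simplicity of $\lambda_p$ all such minimizers lie on a single line, so $W\subseteq\{w^+,-w^+\}$ for one vector $w^+$ of norm $\rho$; being connected, $W$ must be a singleton, so $v(t)\to w$ for that point $w$. The delicate ingredients here are the common limiting norm, the sharpened decay estimate, the interplay of lower semicontinuity with the variational definition of $\lambda_p$, and the connectedness of $\omega$-limit sets of continuous curves.

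Finally, for the identification: if $w\neq 0$ then $\rho>0$, so by the previous step $w$ minimizes $\func(\cdot)/\norm{\cdot}^p$ and is a $p$-ground state in the sense of \cref{def:ground_states}; by \cref{prop:ground_states} it solves \labelcref{eq:gen_ev_prob} with eigenvalue $\lambda_p$, which is the minimal one by construction. Moreover $R(t)=p\func(u(t))/\norm{u(t)}^p\to R_\infty=\lambda_p$, giving the remaining claim.
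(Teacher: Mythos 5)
This theorem is stated in the paper as a quoted literature result (Theorem 1.3 of \citet{hynd2017approximation}); the paper itself contains no proof of it, so there is nothing to compare line by line. Judged on its own, your argument is correct and is a clean, self-contained reconstruction built entirely from tools the paper does develop: the dissipation identity and the Euler-type bound $p\func(u)\leq\norm{\sg}_*\norm{u}$ from the proof of \cref{prop:decay_rayleigh_p-flow}, the monotonicity of the Rayleigh quotient $R(t)$ from that same proposition (with $\hat u=0$, legitimate since $\lambda_p>0$ forces $\argmin\func=\calN(\func)=\{0\}$), the compactness of \cref{ass:compactness} (not listed in the theorem's hypotheses but declared in force for \cref{sec:flows}), and the connectedness of the $\omega$-limit set combined with simplicity to collapse it to a single point. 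The computation $\norm{u'}^p\geq pR(t)^{1/(p-1)}\func(u)$ checks out, the dichotomy $\rho=0$ versus $\rho>0$ is handled correctly, and the identification $R_\infty=\lambda_p$ in the nontrivial case is forced exactly as you say. Two minor points of care, both at the same level of informality as the paper's own \cref{prop:decay_rayleigh_p-flow}: the Grönwall step uses a.e.\ differentiability of $t\mapsto\func(u(t))$ along an absolutely continuous curve of maximal slope, and in the extinction case one should note that $\func\geq 0$ (a consequence of $\lambda_p>0$) together with the energy-dissipation inequality forces $u$ to remain at $0$ once it hits it. Neither affects the validity of the argument.
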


\begin{example}
In this example we let $\X=L^p(\Omega)$ for $p>1$ and consider the functional $\func(u)=\int_\Omega|\nabla u|^p\d x$ if $u\in W^{1,p}_0(\Omega)$ and $\func(u)=\infty$ else.
If the initial datum of~\labelcref{eq:p-flow} is positive, i.e., $f(x)\geq 0$ for almost all $x\in\Omega$, then $w$ is non-zero and is a solution of the $p$-Laplacian eigenvalue problem $\lambda_p|u|^{p-2}u = -\Delta_p u$.
\end{example}

\subsection{Rescaled Gradient Flows}

The second approach we review here was investigated by \citet{bungert2019nonlinear,bungert2019asymptotic,Bungert2020} and deals with Hilbertian gradient flows of absolutely $\alpha$-homogeneous functionals for $\alpha\in[1,\infty)$, meaning that the homogeneity of the functional not necessarily coincides $p=2$, which is the appropriate parameter for the duality map in \labelcref{eq:p-flow} in the case of Hilbert space gradient flows.
It was shown that, depending the homogeneity $\alpha$, solutions decay either in finite time, exponentially, or algebraically (see also \citet{hauer2019kurdyka} for analogous statements for metric gradient flows) and that suitable rescalings converge. 
For all $\alpha\neq 2$ the limit was shown to be a non-zero solution of the eigenvalue problem $\lambda u \in \partial \func(u)$ which is the Hilbert space version of the eigenvalue problem \labelcref{eq:gen_ev_prob}.
For $\alpha=2$ which coincides with $p=2$ the limit can be zero, just as for the previous approach where the homogeneity of the functional coincides with the duality parameter~$p$.

We now adopt the following setting: Let $\X = \H$ be a Hilbert space and $\func:\H \to \R\cup\{\infty\}$ be absolutely $\alpha$-homogeneous.
Setting $p=2$ in \labelcref{eq:p-flow} yields the gradient flow
\begin{align}\label{eq:gradient_flow}
    u'(t) + \partial \func(u(t)) \ni 0, \qquad u(0) = f.
\end{align}
In order to characterize the decay of solution to this flow one consider the following initial value problem, depending on $\lambda>0$
\begin{align}\label{eq:ODE}
    a'(t) = -\lambda a(t)^{\alpha-1}, \qquad a(0) = 1,
\end{align}
which arises as special case of \labelcref{eq:gradient_flow} by setting $\H=\R$, $\func(x) = \frac{\lambda}{\alpha}|x|^\alpha$, and $f=1$.

Slightly more general as above we define the minimal eigenvalue as
\begin{align}\label{eq:eigenvalue_hilbert}
    \lambda_\alpha := \inf_{u\in\calN(\func)^\perp} \frac{\alpha\func(u)}{\norm{u}^\alpha} = \inf_{u\notin\calN(\func)}\frac{\alpha\func(u)}{\norm{u-\overline{u}}^\alpha},
\end{align}
where the \revise{unique} orthogonal projection of $u\in\H$ onto the nullspace $\calN(\func)$ is defined by 
\begin{align}
    \overline{u}:=\argmin_{v\in\calN(\func)}\norm{u-v}.
\end{align}
Since for absolutely homogeneous functionals it holds $\argmin\func=\calN(\func)$ and $\min\func=0$, problem \labelcref{eq:eigenvalue_hilbert} coincides precisely the eigenvalue of ground states in the sense of \cref{def:ground_states}.

As shown by \citet{bungert2019asymptotic} the dynamics of~\labelcref{eq:ODE} completely describe the asymptotic behavior of the much more general gradient flow~\labelcref{eq:gradient_flow}.
Furthermore, rescaling the gradient flow with a solution of~\labelcref{eq:ODE}, yields convergence to an eigenfunction.

\begin{thm}[Theorems 2.3, 2.4 by \citet{bungert2019asymptotic}]
Let $u:[0,\infty)\to\H$ be a solution of~\labelcref{eq:gradient_flow} with $\func$ being absolutely $\alpha$-homogeneous.
Let furthermore $\tex:=\inf\{t > 0 \st u(t) = \overline{f}\}$ denote the extinction time of~\labelcref{eq:gradient_flow} and
\begin{align*}
    \lambda:=
    \begin{cases}
    \frac{1}{(2-\alpha)\tex},\qquad &1\leq \alpha < 2, \\
    \lambda_\alpha\norm{f}^{\alpha-2},\qquad &2 \leq \alpha < \infty.
    \end{cases}
\end{align*}
Then there exists an increasing sequence $(t_k)_{k\in\N}$ with $t_k \nearrow \tex$, and an element $w\in\H$ with 
\begin{align*}
    \lim_{k\to\infty}\frac{u(t_k)-\overline{f}}{a_\lambda(t_k)} = w, \qquad
    \lambda w \in \partial\func(w),
\end{align*}
where $a_\lambda:[0,\infty)\to[0,1]$ denotes the solution of \labelcref{eq:ODE}.
Furthermore, it holds:
\begin{itemize}
    \item If $\alpha\neq 2$ then $w \neq 0$.
    \item If $\alpha=2$ and $w\neq 0$, then $w$ is a $2$-ground state and it holds $\lambda_2=\lim_{t\to\infty}\frac{2\func(u(t))}{\norm{u(t)-\overline{f}}^2}$.
\end{itemize}
\end{thm}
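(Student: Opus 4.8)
The plan is to reduce to the case $\overline f=0$, use the monotonicity of the Rayleigh quotient to pin down the exact decay rate of $\norm{u(t)-\overline f}$, and then run a Lyapunov-plus-compactness argument on a rescaled, time-changed flow. For the reduction, observe that $\func(u+v)=\func(u)$ for $v\in\calN(\func)$, so testing the subgradient inequality against $\pm v$ gives $\partial\func(u)\subseteq\calN(\func)^\perp$ for every $u$; hence the orthogonal projection of $u(t)$ onto $\calN(\func)$ has vanishing derivative and stays equal to $\overline f$, and $\tilde u(t):=u(t)-\overline f$ solves the same flow in $\calN(\func)^\perp$ started at $f-\overline f$. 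Assume then $\overline f=0$, $u(t)\in\calN(\func)^\perp$. For the Hilbertian flow one has $u'(t)=-\sg(t)$ with $\sg(t)\in\partial\func(u(t))$, $\tfrac{\d}{\d t}\func(u(t))=-\norm{u'(t)}^2$, and Euler's identity $\langle\sg(t),u(t)\rangle=\alpha\func(u(t))$; combining these gives, where $u(t)\neq0$,
\begin{align*}
    \frac{\d}{\d t}\norm{u(t)} = -\frac{\alpha\func(u(t))}{\norm{u(t)}} = -R(t)\norm{u(t)}^{\alpha-1},\qquad R(t):=\frac{\alpha\func(u(t))}{\norm{u(t)}^\alpha}.
\end{align*}
By \cref{prop:decay_rayleigh_p-flow} (with $p=2$), $R$ is non-increasing, so $R(t)\downarrow\Lambda$ with $\Lambda\in[\lambda_\alpha,R(0)]$; moreover \cref{ass:compactness} forces $\lambda_\alpha>0$, since a normalized minimizing sequence for $\lambda_\alpha$ has a strongly convergent subsequence whose limit would otherwise lie in $\calN(\func)\cap\calN(\func)^\perp=\{0\}$.

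Next I would pin down the decay rate and the rescaling. Integrating $\tfrac{\d}{\d t}\norm{u(t)}^{2-\alpha}=-(2-\alpha)R(t)$ and comparing with \labelcref{eq:ODE} shows that, for the $\lambda$ in the statement, $\norm{u(t)}$ extinguishes at finite $\tex$ when $\alpha<2$ and decays exponentially ($\alpha=2$) or algebraically ($\alpha>2$) with $\tex=\infty$. Since $R(t)\to\Lambda$, averaging gives $\norm{u(t)}^{2-\alpha}/a_\lambda(t)^{2-\alpha}\to\ell^{2-\alpha}\in(0,\infty)$ as $t\nearrow\tex$ when $\alpha\neq2$: for $\alpha<2$ one uses $a_\lambda(t)^{2-\alpha}=(\tex-t)/\tex$ and $\Lambda\ge\lambda_\alpha>0$, and for $\alpha>2$ the factor $\norm f^{\alpha-2}$ in $\lambda$ absorbs the mismatch $a_\lambda(0)=1\neq\norm f$. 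Hence $v(t):=u(t)/a_\lambda(t)$ satisfies $\sup_t\norm{v(t)}<\infty$, and $\norm{v(t)}$ is bounded away from $0$ when $\alpha\neq2$; for $\alpha=2$ only the bound $\norm{v(t)}\leq\norm f$ holds and $\norm{v(t)}$ may tend to $0$.

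The core step is a change of time: put $\d s=a_\lambda(t)^{\alpha-2}\,\d t$, which (one checks the total new time is infinite in all three regimes) maps $[0,\tex)$ bijectively onto $[0,\infty)$. Using that $\partial\func$ is $(\alpha-1)$-homogeneous, the rescaled curve obeys
\begin{align*}
    \frac{\d v}{\d s}(s) + \partial\func(v(s)) \ni \lambda\, v(s).
\end{align*}
For this equation $G(s):=\func(v(s))-\tfrac{\lambda}{2}\norm{v(s)}^2$ is a Lyapunov functional with $\tfrac{\d}{\d s}G(s)=-\norm{v_s(s)}^2$; since $\norm{v(s)}$ is bounded, $G$ is bounded below, so $\int_0^\infty\norm{v_s}^2\,\d s<\infty$ and there is $s_k\to\infty$ with $v_s(s_k)\to0$. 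Along $(s_k)$, $\norm{v(s_k)}+\func(v(s_k))$ is bounded (as $\func(v)=\tfrac{R}{\alpha}\norm v^\alpha$ with $R$ bounded), so \cref{ass:compactness} gives a subsequence with $v(s_k)\to w$ strongly, while $\sg(s_k):=\lambda v(s_k)-v_s(s_k)\in\partial\func(v(s_k))$ converges strongly to $\lambda w$; sequential closedness of the graph of $\partial\func$ then yields $\lambda w\in\partial\func(w)$, and with $t_k:=t(s_k)\nearrow\tex$ we get $(u(t_k)-\overline f)/a_\lambda(t_k)=v(s_k)\to w$. I expect this to be the main obstacle: making the time change and the chain rule $\tfrac{\d}{\d s}\func(v(s))=\langle\sg(s),v_s(s)\rangle$ rigorous (absolute continuity of $s\mapsto\func(v(s))$ and the measurable selection), and controlling $a_\lambda(t)^{\alpha-2}$, which blows up for $\alpha<2$ and degenerates for $\alpha>2$.

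Finally, for the additional claims: if $\alpha\neq2$ then $\norm w=\lim_s\norm{v(s)}=\ell>0$, so $w\neq0$. If $\alpha=2$ and $w\neq0$, the eigenvalue in $\lambda w\in\partial\func(w)$ is the chosen value $\lambda=\lambda_\alpha\norm f^{\alpha-2}=\lambda_2$; pairing with $w$ and using Euler's identity gives $2\func(w)/\norm w^2=\lambda_2$, while $v(s_k)\to w$ strongly together with $\func(v(s_k))\to\func(w)$ (from the subgradient inequality and lower semicontinuity) shows $R(s_k)\to2\func(w)/\norm w^2=\lambda_2$, hence $\Lambda=\lambda_2$. Thus $w$ attains the infimum $\lambda_\alpha$ in \labelcref{eq:eigenvalue_hilbert}, so by \cref{prop:ground_states} (and the fact that for homogeneous functionals ground states are exactly the Rayleigh-quotient minimizers, here with $\hat u_*=\overline w=0$) $w$ is a $2$-ground state, and $\lim_{t\to\infty}2\func(u(t))/\norm{u(t)-\overline f}^2=\lim_t R(t)=\Lambda=\lambda_2$.
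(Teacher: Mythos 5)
This theorem is quoted in the chapter from \citet{bungert2019asymptotic} without proof, so there is no in-paper argument to compare against; your proposal has to be measured against the strategy of that reference. Your outline is essentially a correct reconstruction of it: the reduction to $\overline f=0$ via $\partial\func(u)\subset\calN(\func)^\perp$, the identity $\frac{\d}{\d t}\norm{u}=-\alpha\func(u)/\norm{u}$ combined with the monotonicity of the Rayleigh quotient (\cref{prop:decay_rayleigh_p-flow}) to pin down the exact extinction or decay rate and the boundedness (and, for $\alpha\neq2$, non-degeneracy) of $u(t)/a_\lambda(t)$, and the extraction of the limit by compactness plus closedness of the graph of $\partial\func$ are precisely the ingredients of the original proof. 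Where you differ is in packaging: the reference stays in the original time variable and works with a weighted energy estimate for the rescaled flow, whereas you reparametrize time by $\d s=a_\lambda^{\alpha-2}\d t$ to obtain the autonomous inclusion $v_s+\partial\func(v)\ni\lambda v$ with the Lyapunov functional $G=\func(v)-\tfrac{\lambda}{2}\norm{v}^2$. This buys a transparent dissipation identity $\frac{\d}{\d s}G=-\norm{v_s}^2$ at the price of having to verify that the new time variable exhausts $[0,\infty)$ in all three regimes (which you correctly check) and that the chain rule survives the singular, respectively degenerate, factor $a_\lambda^{\alpha-2}$; in the Hilbertian Brezis theory these points are standard.

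Two points you should make explicit. First, the theorem as transcribed in the chapter omits the compactness hypothesis, but your proof (rightly) cannot do without \cref{ass:compactness}: it is needed both to conclude $\lambda_\alpha>0$ (hence $\tex<\infty$ for $\alpha<2$ and the non-vanishing of $\norm{v}$ for $\alpha\neq 2$) and to extract the strongly convergent subsequence $v(s_k)\to w$; this hypothesis is present in the cited source and should be stated alongside the theorem. Second, your decay computation uses $R(0)=\alpha\func(f)/\norm{f-\overline f}^\alpha<\infty$; for $f\notin\dom\func$ one must first invoke the regularizing effect of the flow, namely $u(t_0)\in\dom\partial\func$ for every $t_0>0$, and restart the argument at a positive time. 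Neither point is a gap in the mathematics, only in the bookkeeping.
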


\begin{example}
Here we let $\H=L^2(\Omega)$ and define the absolutely one-homogeneous functional $\func(u)=\esssup_{x\in\Omega} |\nabla u(x)|$ if $u\in W^{1,\infty}_0(\Omega)$ and $\func(u)=\infty$ else.
If the initial datum of~\labelcref{eq:gradient_flow} is positive, i.e., $f(x)\geq 0$ for almost all $x\in\Omega$, then $w$ is non-zero and a solution of the eigenvalue problem $\lambda u\in \partial\func(u)$, which was shown by \citet{bungert2020structural} to equal a multiple of the Euclidean distance function $x\mapsto \dist(x,\partial\Omega)$ (see~\citet{roith2020continuum} for the analogous statement for geodesic distance functions).
\end{example}

\subsection{Normalized Gradient Flows}

The previous rescaling factors $e^{\mu_p t}$ and $1/a_\lambda(t)$, respectively, allow one to investigate whether the rescaled solutions converge to ground states, i.e., eigenvectors with minimal eigenvalue.
However, both rescalings depend on numbers which are not available in applications (the eigenvalue $\lambda_p$ and the extinction time $\tex$).
Therefore, one can study rescalings with the norm of the solution itself, i.e.,
\begin{align}\label{eq:normalized_gradflow}
\begin{cases}
    u'(t) + \partial\func(u(t)) \ni 0, \quad u(0) = f, \\
    w(t) = \frac{u(t)- \overline{f}}{\norm{u(t)-\overline{f}}},
\end{cases}
\end{align}
where $\func$ is as before. 
Since the proofs of the previous theorems largely depend on the statement of \cref{prop:decay_rayleigh_p-flow}, it does not matter too much which rescaling one chooses as long as one can make sure that the rescaled gradient flow remains uniformly bounded.

The following result was proven by \citet{varvaruca2004exact} for functionals that are locally subhomogeneous, however, for brevity we state it for homogeneous functionals.
It asserts that the rescalings from \labelcref{eq:normalized_gradflow} converge to eigenvectors, however, does not give conditions for this eigenvector being a ground state. 

\begin{thm}[Theorem 4.1 by \citet{varvaruca2004exact}]
Let $\alpha\geq 2$ and $u:[0,\infty)\to\H$ be a solution of~\labelcref{eq:gradient_flow} with $\func$ being absolutely $\alpha$-homogeneous. 
Then there exists an increasing sequence $(t_k)_{k\in\N}$ with $t_k \to \infty$, and an element $w\in\H$ such that 
\begin{align*}
    \lim_{k\to\infty} \frac{u(t_k)-\overline{f}}{\norm{u(t_k)-\overline{f}}^\alpha} = w, \qquad
    \lim_{k\to\infty} \frac{\alpha\func(u(t_k))}{\norm{u(t_k)-\overline{f}}^\alpha} = \lambda, \qquad
    \lambda w \in \partial \func(w).
\end{align*}
\end{thm}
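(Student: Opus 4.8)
The plan is to normalize the flow to unit norm, perform a time change under which the monotone decay of the Rayleigh quotient controls the \emph{speed} of the normalized curve, extract a sequence of times at which that speed vanishes, and pass to the limit using the compactness assumption together with closedness of the subdifferential. I may assume $f\notin\calN(\func)$, since otherwise $u(t)\equiv\overline f$ and the statement is vacuous; and since $\overline{u(t)}=\overline f$ is conserved along the flow and $\func$ is invariant under translations by $\calN(\func)$, the shifted curve $u(t)-\overline f$ solves the same equation, so I may take $\overline f=0$ and $f\in\calN(\func)^\perp$. Write $n(t):=\norm{u(t)}$, $R(t):=\alpha\func(u(t))/n(t)^\alpha$, $v(t):=u(t)/n(t)$, and $u'(t)=-\xi(t)$ with $\xi(t)\in\partial\func(u(t))$.

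First I would record that $R$ is non-increasing (this is \cref{prop:decay_rayleigh_p-flow}, or its refinement below) and bounded below by $0$, so $R(t)\searrow\lambda$ for some $\lambda\ge 0$, which already yields the second limit. Next, from $\tfrac{\d}{\d t}n^2=-2\alpha\func(u)\ge-2R(0)n^\alpha$ and $\alpha\ge 2$, a comparison of $y=n^2$ with the ODE $y'=-2R(0)y^{\alpha/2}$ (which has $\alpha/2\ge 1$) shows that $n(t)>0$ for all $t$ --- no finite-time extinction, so the normalization makes sense --- and, for $\alpha>2$, the polynomial lower bound $n(t)\gtrsim(1+t)^{-1/(\alpha-2)}$, whence $\int_0^\infty n(t)^{\alpha-2}\,\d t=\infty$. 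Differentiating $u=nv$ and using $n'=-\alpha n^{\alpha-1}\func(v)$, the $(\alpha-1)$-homogeneity $\zeta(t):=n(t)^{1-\alpha}\xi(t)\in\partial\func(v(t))$ of the subdifferential, and Euler's identity $\langle\zeta,v\rangle=\alpha\func(v)$, one obtains $v'(t)=-n(t)^{\alpha-2}\bigl(\zeta(t)-\langle\zeta(t),v(t)\rangle v(t)\bigr)$, the identity $R(t)=\alpha\func(v(t))$, and the dissipation rule $-\tfrac{\d}{\d t}R(t)=\alpha\norm{v'(t)}^2/n(t)^{\alpha-2}$. Introducing the new time $s(t):=\int_0^t n(r)^{\alpha-2}\,\d r$, which by the above is a bijection of $[0,\infty)$ onto itself, the curve $v$ read in the variable $s$ satisfies $\tfrac{\d v}{\d s}=-(\zeta-\langle\zeta,v\rangle v)$ and $\tfrac{\d}{\d s}R=-\alpha\norm{\tfrac{\d v}{\d s}}^2$, so $\int_0^\infty\norm{\tfrac{\d v}{\d s}}^2\,\d s\le R(0)/\alpha<\infty$ and there is $s_k\to\infty$ with $\norm{\tfrac{\d v}{\d s}(s_k)}\to 0$; let $t_k\to\infty$ be the associated times.

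Finally, since $\norm{v(t_k)}=1$ and $\func(v(t_k))=R(t_k)/\alpha$ stays bounded, \cref{ass:compactness} provides a subsequence (not relabeled) with $v(t_k)\to w$ strongly and $\norm w=1$, giving the first limit (up to undoing $\overline f=0$). To identify $w$, note $\zeta(t_k)-\langle\zeta(t_k),v(t_k)\rangle v(t_k)=-\tfrac{\d v}{\d s}(s_k)\to 0$ while $\langle\zeta(t_k),v(t_k)\rangle=\alpha\func(v(t_k))=R(t_k)\to\lambda$ and $v(t_k)\to w$, so $\zeta(t_k)\to\lambda w$ strongly; since $\zeta(t_k)\in\partial\func(v(t_k))$ and the graph of $\partial\func$ is sequentially closed under strong convergence (by lower semicontinuity of $\func$), we conclude $\lambda w\in\partial\func(w)$. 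I expect the crux --- and the reason a naive ``$\norm{v'(t_k)}\to 0$ plus compactness'' argument fails once $\alpha>2$ --- to be the second paragraph: one must simultaneously exclude finite extinction and pick the \emph{correct} reparametrization, so that the decay of $R$ bounds $\norm{\d v/\d s}$ rather than the $n$-weighted quantity $\norm{v'}$; this is precisely what turns the dissipation estimate into a sequence of times at which the tangential part of the subgradient (not merely the velocity $v'$) tends to zero. For $\alpha=2$ the time change is trivial and the argument collapses to the classical one.
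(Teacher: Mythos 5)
The paper does not actually prove this theorem: it is quoted verbatim from \citet{varvaruca2004exact}, with only a remark that the techniques of \citet{bungert2019asymptotic} extend it to $\alpha\geq1$, so there is no in-paper proof to compare against. Your argument is correct and is essentially the standard route behind such results: reduction to $\overline f=0$ via conservation of the nullspace component, the identity $\frac{\d}{\d t}\norm{u}^2=-2\alpha\func(u)$ plus ODE comparison to exclude finite extinction for $\alpha\geq2$ and to get $\int_0^\infty\norm{u(t)}^{\alpha-2}\,\d t=\infty$, the reparametrized dissipation identity $\frac{\d}{\d s}R=-\alpha\norm{\d v/\d s}^2$ with $\frac{\d v}{\d s}=-(\zeta-\langle\zeta,v\rangle v)$, extraction of times at which the tangential part of the subgradient vanishes, and then \cref{ass:compactness} together with strong--strong closedness of the graph of $\partial\func$. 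Your dissipation identity is precisely the quantitative sharpening of \cref{prop:decay_rayleigh_p-flow} needed here, and your diagnosis that the time change is the crux for $\alpha>2$ is on point. One discrepancy you should flag: you prove convergence of the unit-normalized profiles $(u(t_k)-\overline f)/\norm{u(t_k)-\overline f}$, whereas the theorem as printed divides by $\norm{u(t_k)-\overline f}^{\alpha}$; for $\alpha>2$ and $\lambda>0$ the printed quantity diverges (the norm decays algebraically to zero), and only the unit normalization is consistent with $\norm{w}=1$, with $\lambda=\alpha\func(w)$, and with the normalized flow \labelcref{eq:normalized_gradflow} that this theorem is meant to underpin --- so the exponent in the display is presumably a typo and your reading is the correct one. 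The only implicit debts in your write-up are the standard regularity facts for Hilbertian gradient flows of convex functionals (local Lipschitz continuity, a.e.\ differentiability, and the chain rule $\frac{\d}{\d t}\func(u(t))=\langle\xi,u'(t)\rangle$), which is an acceptable level of detail for this argument.
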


\begin{rem}
Using the techniques from \citet{bungert2019asymptotic} and introducing extinction times, it is easy to prove that this theorem also holds true for all $\alpha\geq 1$. 
One only has to replace the sequences $t_k\to\infty$ with sequences $t_k\to\tex$.
\end{rem}

We conclude this section by an interesting observation.
One can show that time rescalings $w(t)$ in \labelcref{eq:normalized_gradflow} also solve a suitable flow, which is the statement of the following proposition.
\begin{prop}
Let $w(t)$ be given by~\labelcref{eq:normalized_gradflow} for $t\in[0,\tex)$ and let $v(t):=w\left(\norm{u(t)}^{2-\alpha}t\right)$.
Then $v$ solves
\begin{align}\label{eq:rescaled_flow}
    v' \in \alpha\func(v) v - \partial\func(v),\quad v(0)=\frac{f-\overline{f}}{\norm{f-\overline{f}}}.
\end{align}
\end{prop}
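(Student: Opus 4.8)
The plan is to verify \labelcref{eq:rescaled_flow} by a direct computation along the given solution: first differentiate the normalized curve $w$ and use the $\alpha$‑homogeneity of $\func$ to show that $w$ already solves \labelcref{eq:rescaled_flow} \emph{up to a pointwise–positive scalar factor} $\norm{u(t)-\overline f}^{\alpha-2}$, and then absorb that factor by the time reparametrization that the proposition records in the compact form $t\mapsto\norm{u(t)}^{2-\alpha}t$. No compactness or existence theory enters; the only facts needed are that $\func$ is absolutely $\alpha$‑homogeneous, that $u$ is absolutely continuous and a.e.\ differentiable with a selection $\sg(t)\in\partial\func(u(t))$ obeying $u'(t)=-\sg(t)$ for a.e.\ $t$ (which is precisely what it means to solve \labelcref{eq:gradient_flow}, cf.\ the discussion after the definition of $p$‑curves of maximal slope), and that $\norm{u(t)-\overline f}>0$ for $t\in[0,\tex)$ by definition of $\tex$.

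First I would pass to the translated curve $y(t):=u(t)-\overline f$. Since $\overline f\in\calN(\func)=\argmin\func$, both $\func$ and $\partial\func$ are invariant under the shift by $\overline f$, so $\partial\func(u(t))=\partial\func(y(t))$ and $y'=u'\in-\partial\func(y)$, with $y(0)=f-\overline f$ and $\rho(t):=\norm{y(t)}>0$ on $[0,\tex)$. Writing $w=y/\rho$ and differentiating by the quotient rule, using $\tfrac{\d}{\d t}\rho=\langle w,y'\rangle$ and Euler's identity $\langle\sg,y\rangle=\alpha\func(y)$ for $\sg\in\partial\func(y)$ (valid because $\func$ is convex and $\alpha$‑homogeneous), one gets $\tfrac{\d}{\d t}\rho=-\alpha\func(y)/\rho$ and hence
\begin{align*}
  w'(t)=-\frac{\sg(t)}{\rho(t)}+\frac{\alpha\func(y(t))}{\rho(t)^2}\,w(t).
\end{align*}
Now invoke homogeneity once more: $\func(y)=\rho^\alpha\func(w)$ and $\partial\func(y)=\partial\func(\rho w)=\rho^{\alpha-1}\partial\func(w)$, so $\sg/\rho=\rho^{\alpha-2}\tilde\sg$ for some $\tilde\sg\in\partial\func(w)$ and $\alpha\func(y)/\rho^2=\alpha\rho^{\alpha-2}\func(w)$. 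Substituting yields the key identity
\begin{align*}
  w'(t)\in\rho(t)^{\alpha-2}\bigl(\alpha\func(w(t))\,w(t)-\partial\func(w(t))\bigr),\qquad t\in[0,\tex).
\end{align*}

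It remains to cancel the factor $\rho(t)^{\alpha-2}=\norm{u(t)-\overline f}^{\alpha-2}$. To this end I would introduce the strictly increasing time change with instantaneous rate $\norm{u-\overline f}^{2-\alpha}$: let $s\mapsto t(s)$ solve $t'(s)=\norm{u(t(s))-\overline f}^{2-\alpha}$, $t(0)=0$ (well defined since $\norm{u-\overline f}$ is continuous and bounded away from $0$ on compact subintervals of $[0,\tex)$), and set $v(s):=w(t(s))$. The chain rule and the key identity give $v'(s)=\rho(t(s))^{\alpha-2}\,t'(s)\,\bigl(\alpha\func(v(s))v(s)-\tilde\sg(t(s))\bigr)=\alpha\func(v(s))v(s)-\partial\func(v(s))$, because $\tilde\sg(t(s))\in\partial\func(w(t(s)))=\partial\func(v(s))$; and $v(0)=w(0)=(f-\overline f)/\norm{f-\overline f}$, which is exactly \labelcref{eq:rescaled_flow}. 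When $\alpha=2$ the reparametrization is the identity and \labelcref{eq:rescaled_flow} follows immediately from the key identity.

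The routine calculus (quotient rule, differentiating $t\mapsto\norm{y(t)}$, composition with the time change) is harmless on the a.e.\ differentiability that solutions of \labelcref{eq:gradient_flow} enjoy. The step that genuinely needs care is the \emph{time reparametrization when $\alpha\neq2$}: one must check that the rescaled time variable sweeps out the full interval claimed for $v$ — equivalently that $\int_0^{\tex}\norm{u(r)-\overline f}^{\alpha-2}\,\d r=\infty$, which for $\alpha<2$ follows from the finite‑extinction asymptotics of $\rho$ near $\tex$ recalled in \cref{sec:flows}, and for $\alpha>2$ from $\tex=\infty$ together with the decay rate of $\rho$ — and that $w$ is absolutely continuous in the original time so that $v=w\circ t(\cdot)$ inherits absolute continuity and the composition rule is licit. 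I would also double‑check the bookkeeping of the nullspace offset $\overline f$ against its omission in the displayed factor $\norm{u(t)}^{2-\alpha}t$, which is immaterial under \cref{ass:hom_nullspace} but should be stated consistently.
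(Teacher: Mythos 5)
Your proposal is correct and follows essentially the same route as the paper's proof: differentiate the normalized curve with the quotient rule, use Euler's identity and the $(\alpha-1)$-homogeneity of $\partial\func$ to obtain $w'\in\norm{u-\overline f}^{\alpha-2}\bigl(\alpha\func(w)w-\partial\func(w)\bigr)$, and absorb the scalar factor by a time reparametrization. You are in fact somewhat more careful than the paper on two points it glosses over — defining the time change as the solution of $t'(s)=\norm{u(t(s))-\overline f}^{2-\alpha}$ rather than via the literal formula $\norm{u(t)}^{2-\alpha}t$ (whose derivative does not produce the clean cancellation), and tracking the offset $\overline f$ throughout — but the substance of the argument is the same.
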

\begin{proof}
Since $t\mapsto u(t)$ is Lipschitz continuous and right differentiable for $t>0$ and $u(t)\neq 0$ for all $t\in[0,\tex)$, the same holds true for $t\mapsto\norm{u(t)}$ and $t\mapsto w(t)$.
Using the quotient rule and letting $\sg(t):=-u'(t)\in\partial\func(u(t))$, one gets
\begin{align*}
    w'(t) &= \frac{\norm{u(t)}u'(t) - \langle u'(t),u(t)\rangle \frac{u(t)}{\norm{u(t)}}}{\norm{u(t)}^2} 
    = \frac{\alpha\func(u(t))}{\norm{u(t)}^2} w(t) -\frac{\sg(t)}{\norm{u(t)}}  \\
    &= \norm{u(t)}^{\alpha-2}\alpha\func(w(t)) w(t) -\frac{\sg(t)}{\norm{u(t)}} 
    = \norm{u(t)}^{\alpha-2}\left[\alpha\func(w(t))w(t) - \frac{\sg(t)}{\norm{u(t)}^{\alpha-1}} \right].
\end{align*}
Now we use that $\partial\func$ is $(\alpha-1)$-homogeneous to obtain that $\sg(t)/\norm{u(t)}^{\alpha-1}\in\partial\func(w(t))$.
Hence, the time rescaling $v(t):=w(\norm{u(t)}^{2-\alpha}t)$ solves the flow \labelcref{eq:rescaled_flow}.
\end{proof}

\subsection{Rayleigh Quotient Minimizing Flows}

Flows similar to~\labelcref{eq:rescaled_flow} have been proposed in the literature a lot, however, without explicitly relating to gradient flows.
A comprehensive overview is given by \citet{gilboa2020iterative}.
For instance, \citet{feld2019rayleigh} studied a Rayleigh quotient minimizing flow which takes the form
\begin{align}\label{eq:FAGP-flow}
    \begin{cases}
        u(0) = f, \\
        u' = R(u) q - \sg,\quad q \in \partial H(u),\,\sg\in\partial \func(u).
    \end{cases}
\end{align}
The flow was derived in order to minimize the Rayleigh quotient
\begin{align}\label{eq:rayleigh_J_R}
    R(u) = \frac{\func(u)}{H(u)},
\end{align}
for absolutely one-homogeneous functionals $\func$ and $H$ defined on a Hilbert space $\H$ and extends previous results from \citet{aujol2018theoretical}.
In applications, often $H(u)=\norm{u}_\X$ where $\X$ is a Banach space larger than $\H$, e.g., $\X=L^1(\Omega)$ and $\H=L^2(\Omega)$.

The flow \labelcref{eq:FAGP-flow} has the property that $\frac{\d}{\d t}\frac{1}{2}\norm{u(t)}^2=0$, hence preserving the norm of the initial condition. 
Consequently, for the choice $H(u)=\norm{u}$ and $\norm{f}=1$ the flow \labelcref{eq:FAGP-flow} reduces to \labelcref{eq:rescaled_flow}, which is equivalent to the normalized gradient flow \labelcref{eq:normalized_gradflow}, as we showed in the previous section.

A similar evolution was initially studied by \citet{nossek2018flows} for the minimization of $\func(u)/\norm{u}$ for one-homogeneous $\func$.
For the minimization of the general Rayleigh quotient~\labelcref{eq:rayleigh_J_R} we can generalize it as follows
\begin{align}\label{eq:nossek-flow}
\begin{cases}
    v(0) = f, \\
    v' = r - \frac{\eta}{H_*(\eta)},\quad r \in \partial H(v),\;\eta\in\partial \func(v).
\end{cases}
\end{align}
Here $H_*$ is the dual seminorm (see, e.g., \citet{bungert2019nonlinear,Bungert2020} for properties) to the absolutely 1-ho\-mo\-ge\-neous functional $H$, defined as
\begin{align}\label{eq:dual_semi-norm}
    H_*(\sg) = \sup_{\substack{u\in\H\\H(u)=1}}\langle\sg,u\rangle,\quad\sg\in\calN(H)^\perp.
\end{align}
Interestingly, as we show in the following proposition, this flow is asymptotically equivalent to~\labelcref{eq:FAGP-flow}.
In particular, for $H(u)=\norm{u}$ the original method from \citet{nossek2018flows} is asymptotically equivalent to the normalized gradient flow \labelcref{eq:normalized_gradflow}.

\begin{prop}
Let $v$ be a solution of \labelcref{eq:nossek-flow} and let $\phi:[0,\infty) \to [0,\infty)$ solve the initial value problem
\begin{align}\label{eq:ivp}
    \phi'(t) = \frac{\func(v(\phi(t)))}{H(v(\phi(t)))},\quad \phi(0) = 0.
\end{align}
Then $u(t):=v(\phi(t))$ solves 
\begin{align}\label{eq:asymptotic-nossek-flow}
    u' = R(u)q - \frac{\func(u)}{H(u)H_*(\sg)}\sg, \quad q\in\partial H(u),\,\sg \in \partial \func(u).
\end{align}
\end{prop}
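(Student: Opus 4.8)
The plan is to perform a change of variables in time. Given the solution $v$ of \labelcref{eq:nossek-flow} and the time reparametrization $\phi$ solving \labelcref{eq:ivp}, I would set $u(t):=v(\phi(t))$ and differentiate using the chain rule, so that $u'(t)=\phi'(t)\,v'(\phi(t))$. The point is that $\phi'(t)=R(v(\phi(t)))=R(u(t))=\func(u(t))/H(u(t))$ by definition of $\phi$, so the factor multiplying $v'$ is exactly the Rayleigh quotient $R(u)$.

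The key steps, in order: First I would record the regularity needed to justify the chain rule --- $v$ is Lipschitz and right-differentiable for $t>0$ (as in the previous proposition, since it solves a flow with a bounded right-hand side on bounded sets, and $v(t)\neq 0$ along the evolution as long as we stay away from the nullspace), and $\phi$ is $C^1$ because its right-hand side $t\mapsto R(v(\phi(t)))$ is continuous; hence $u=v\circ\phi$ is Lipschitz and right-differentiable. Second, I would compute, for $r\in\partial H(v(\phi(t)))$ and $\eta\in\partial\func(v(\phi(t)))$ as in \labelcref{eq:nossek-flow},
\begin{align*}
    u'(t) &= \phi'(t)\,v'(\phi(t)) = \frac{\func(u(t))}{H(u(t))}\left(r - \frac{\eta}{H_*(\eta)}\right) \\
    &= \frac{\func(u(t))}{H(u(t))}\,r - \frac{\func(u(t))}{H(u(t))\,H_*(\eta)}\,\eta.
\end{align*}
Third, I would identify $\func(u)/H(u)=R(u)$ in the first term, and observe that $r\in\partial H(u(t))$ and $\eta\in\partial\func(u(t))$ since $u(t)=v(\phi(t))$; renaming $r=q$ and $\eta=\sg$ yields exactly \labelcref{eq:asymptotic-nossek-flow}.

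The main obstacle --- and the only genuinely non-routine point --- is verifying that $\phi$ is well-defined and does not blow up or terminate prematurely, i.e. that the initial value problem \labelcref{eq:ivp} has a solution on $[0,\infty)$. This hinges on $R(v(t))=\func(v(t))/H(v(t))$ staying bounded and bounded away from pathological behaviour along the flow, which follows from the fact that \labelcref{eq:nossek-flow} decreases the Rayleigh quotient (so $R(v(\phi(t)))\le R(f)$) and preserves $H$ up to controlled factors, keeping $v(\phi(t))$ in a sublevel set where $\func$ and $H$ are comparable; under \cref{ass:compactness} and $\calN(\func)=\{0\}$ this is automatic. I would state this boundedness as the hypothesis under which $\phi$ exists globally, or invoke the decrease of the Rayleigh quotient established earlier, and otherwise treat the identity on the maximal interval of existence of $\phi$. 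Everything else is the chain rule together with the $(\alpha-1)$-homogeneity-free observation that subgradients at $v(\phi(t))$ are subgradients at $u(t)$ by definition.
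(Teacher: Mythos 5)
Your proof is correct and is essentially identical to the paper's: both apply the chain rule to $u=v\circ\phi$, substitute $\phi'(t)=\func(u(t))/H(u(t))=R(u(t))$, and note that the subgradients $r,\eta$ at $v(\phi(t))$ are subgradients at $u(t)$. The well-posedness of the initial value problem for $\phi$, which you flag as the non-routine point, is handled in the paper by a separate remark (via the monotone decrease of $\func(v)/H(v)$ and Peano's theorem) rather than inside the proof itself.
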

\begin{proof}
Defining $\sg(t):=\eta(\phi(t))\in \partial \func(u(t))$ and $q(t):=r(\phi(t))\in\partial H(u(t))$, one computes
\begin{align*}
    u'(t) &= \phi'(t)v'(\phi(t)) 
    = \phi'(t)\left[q(t) - \frac{\sg(t)}{H_*(\sg(t))}\right] \\
    &= \frac{\func(u(t))}{H(u(t))} q(t) -  \frac{\func(u(t))}{H(u(t))H_*(\sg(t))}\sg(t) 
    = R(u(t))q(t) - \frac{\func(u(t))}{H(u(t))H_*(\sg(t))}\sg(t).
\end{align*}
\end{proof}
\begin{rem}
By the definition of the dual functional $H_*$ (cf.~\labelcref{eq:dual_semi-norm}) it holds
\begin{align*}
    \frac{\func(u(t))}{H(u(t))H_*(\sg(t))} = \frac{\langle\sg(t),u(t)\rangle}{H(u(t))H_*(\sg(t))} \leq 1,\quad\forall t>0.
\end{align*}
Since, however, \labelcref{eq:asymptotic-nossek-flow} converges to an eigenvector as $t\to\infty$, one can even show
\begin{align*}
    \lim_{t\to\infty}\frac{\func(u(t))}{H(u(t))H_*(\sg(t))} = 1.
\end{align*}
This makes \labelcref{eq:asymptotic-nossek-flow} asymptotically equivalent to \labelcref{eq:FAGP-flow}.
\end{rem}
\begin{rem}
Note that the initial value problem \labelcref{eq:ivp} admits a unique solution, e.g., if ${\func(v(0))}/{H(v(0))}<\infty$.
As \citet{nossek2018flows} one can show that $\frac{\d}{\d t}\frac{\func(v(t))}{H(v(t))}\leq 0$, such that the right hand side of \labelcref{eq:ivp} is a continuous and non-increasing function, for which existence of the initial value problem is guaranteed by the classical Peano theorem and uniqueness follows from the monotonicity of the right hand side.
\end{rem}

\section{Nonlinear Power Methods for Homogeneous Functionals}
\label{sec:power_method}

In this section we first show that the time discretization of the normalized gradient flow \labelcref{eq:normalized_gradflow} yields a nonlinear power method of the proximal operator.
Afterwards, we analyze general nonlinear power methods of $p$-proximal operators in Banach spaces.

A nonlinear power method for the computation of matrix norms has already been investigated in the early work by \citet{boyd1974power}.
\citet{buhler2009spectral,hein2010inverse} applied nonlinear power methods for 1-Laplacian and $p$-Laplacian eigenvectors to graph clustering.
The convergence of $p$-Laplacian inverse power methods for ground states and second eigenfunctions was investigated by \citet{bozorgnia2016convergence,bozorgnia2020approximation}.
Nonlinear power methods and Perron-Frobenius theory for order-preserving multihomogeneous maps were analyzed by \citet{gautier2019perron,gautier2019unifying,gautier2020computing}.
Finally, \citet{bungert2020nonlinear} first investigated proximal power methods, focusing on absolutely one-homogeneous functionals on Hilbert spaces.
In the following, we extend these results to our general setting.

\subsection{Normalized Gradient Flows and Power Methods}
The normalized gradient flow~\labelcref{eq:normalized_gradflow}, analyzed above, already bears a lot of similarity with a power method.
Indeed, discretizing it in time using the minimizing movement scheme \labelcref{eq:minimizing movements} yields a power method for the proximal operator as the following proposition shows.
\begin{prop}\label{prop:normalized_gradflow_powermethod}
Let \revise{$\func:\H \to (-\infty,\infty]$ be an absolutely $\alpha$-homogeneous functional on a Hilbert space $\H$}, and let the sequences $u^k$ and $w^k$ be generated by the iterative scheme
\begin{align}
\begin{cases}
    u^0 &= f, \\
    u^{k+1} &= \prox_{\tau^k\func}(u^{k}),\quad k\in\N_0, \\
    w^{k+1} &= \frac{u^{k+1}-\overline{f}}{\norm{u^{k+1} - \overline{f}}},
\end{cases}
\end{align}
where $(\tau^k)_{k\in\N} \subset \R_+$ is a sequence of step sizes.
Then it holds that 
\begin{align}
    w^{k+1} = \frac{\prox_{\sigma^k}(w^{k})}{\norm{\prox_{\sigma^k}(w^{k})}},
\end{align}
where the step sizes $\sigma^k$ are given by
\begin{align}
    \sigma^k:=\tau^k\norm{u^{k}-\overline{f}}^{\alpha-2}.
\end{align}
\end{prop}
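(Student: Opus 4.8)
The plan is to deduce the normalized identity from the unnormalized iteration by means of two elementary equivariance properties of the Hilbertian proximal operator: invariance under shifts by elements of $\calN(\func)$, and a dilation rule that converts a rescaling of the argument into a rescaling of the step size. Throughout, $\prox_{\tau\func}$ is single-valued because $\func$ is convex, so every identity below is an equality of points in $\H$; moreover one works in the regime before extinction, i.e., $u^k\neq\overline f$ for all $k$, so that the normalizations and the exponent $\alpha-2$ cause no trouble.

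First I would record the shift rule. Since $\func$ is absolutely $\alpha$-homogeneous, $\func(v+\hat u)=\func(v)$ for every $\hat u\in\calN(\func)=\argmin\func$ (the identity already used in the proof of \cref{prop:decrease_rayleigh}; see \citet{bungert2019asymptotic}), and substituting $u=v+\hat u$ in the definition \labelcref{eq:p-prox} of $\prox_{\tau\func}$ (with $p=2$) yields $\prox_{\tau\func}(g+\hat u)=\prox_{\tau\func}(g)+\hat u$. The same translation invariance forces subgradients of $\func$ to be orthogonal to $\calN(\func)$, so the optimality condition \labelcref{eq:OC} shows that $\prox_{\tau\func}$ maps $\calN(\func)^\perp$ into itself; splitting $u^k=(u^k-\overline{u^k})+\overline{u^k}$ and using the shift rule, the nullspace component is therefore preserved along the iteration, $\overline{u^k}=\overline f$ for all $k$. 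Consequently, setting $\tilde u^k:=u^k-\overline f\in\calN(\func)^\perp$, one gets $\tilde u^{k+1}=\prox_{\tau^k\func}(\tilde u^k)$ and $w^{k+1}=\tilde u^{k+1}/\norm{\tilde u^{k+1}}$ (with the convention $w^0=(f-\overline f)/\norm{f-\overline f}$, i.e., reading the displayed definition of $w^{k+1}$ at $k=-1$).

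Next I would prove the dilation rule: for $c>0$, substituting $u=cz$ in \labelcref{eq:p-prox} and using $\func(cz)=c^\alpha\func(z)$ factors out $c^2$, giving $\prox_{\tau\func}(cv)=c\,\prox_{c^{\alpha-2}\tau\func}(v)$. Writing $\tilde u^k=c_k w^k$ with $c_k:=\norm{u^k-\overline f}>0$, this yields $\tilde u^{k+1}=\prox_{\tau^k\func}(c_kw^k)=c_k\,\prox_{\sigma^k\func}(w^k)$ with precisely $\sigma^k=c_k^{\alpha-2}\tau^k=\tau^k\norm{u^k-\overline f}^{\alpha-2}$, and normalizing (cancelling the positive scalar $c_k$) delivers $w^{k+1}=\prox_{\sigma^k\func}(w^k)/\norm{\prox_{\sigma^k\func}(w^k)}$, as claimed. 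The substitutions themselves are routine; the only genuine bookkeeping is the verification that $\prox_{\tau\func}$ commutes with the shift by $\overline f$ and leaves the nullspace component untouched, which is what legitimizes passing from $u^k$ to $\tilde u^k$ everywhere — together with the standing assumption that the iterates avoid $\overline f$ (finite extinction of the proximal sequence, treated in the appendix, is excluded here).
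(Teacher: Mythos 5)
Your proposal is correct and follows essentially the same route as the paper's proof: the heart of both arguments is the dilation identity $\prox_{\tau\func}(cv)=c\,\prox_{c^{\alpha-2}\tau\func}(v)$ obtained by substituting $u=cz$ in the proximal minimization and using $\alpha$-homogeneity, which the paper carries out as a single chain of $\argmin$ rewritings after assuming $\overline{f}=0$ without loss of generality. The only difference is that you make the shift-equivariance $\prox_{\tau\func}(g+\hat u)=\prox_{\tau\func}(g)+\hat u$ and the preservation of the nullspace component explicit, which is exactly what legitimizes the paper's ``WLOG'' reduction.
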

\begin{proof}
Without loss of generality we can assume $\overline{f}=0$.
Using the homogeneity of $\func$, we can compute
\begin{align*}
    u^{k+1} &= \prox_{\tau^k\func}(u^k) 
    = \prox_{\tau^k\func}\left(\norm{u^k}w^k\right) 
    = \argmin_{u\in\H} \frac{1}{2} \norm{u - \norm{u^k}w^k}^2 + \tau^k \func(u) \\
    &= \argmin_{u\in\H} \frac{1}{2} \norm{\frac{u}{\norm{u^k}} - w^k}^2 + \tau^k\norm{u^k}^{-2} \func(u) \\
    &= \argmin_{u\in\H} \frac{1}{2} \norm{\frac{u}{\norm{u^k}} - w^k}^2 + \tau^k\norm{u^k}^{\alpha-2}\func\left(\frac{u}{\norm{u^k}}\right) 
    = \norm{u^k} \prox_{\sigma^k}(w^k).
\end{align*}
This readily implies
\begin{align*}
    w^{k+1} = \frac{u^{k+1}}{\norm{u^{k+1}}} = \frac{\prox_{\sigma^k\func}(w^k)}{\norm{\prox_{\sigma^k\func}(w^k)}}.
\end{align*}
\end{proof}

\subsection{Analysis of Nonlinear Power Methods}
As we have seen above, basically all flows for the computation of nonlinear eigenfunctions are equivalent to the normalized gradient flow~\labelcref{eq:normalized_gradflow}. 
Since the latter gives rise to a nonlinear power method, as shown in \cref{prop:normalized_gradflow_powermethod}, we thoroughly analyze such power methods in larger generality in the following.
Here, we generalize the proof strategy, which was developed by \citet{bungert2020nonlinear} for proximal power methods of absolutely one-homogeneous functionals in Hilbert spaces.
In particular, we utilize a Banach space framework and investigate the power method of the $p$-proximal operator of an absolutely $\alpha$-homogeneous functional.

To avoid heavy notation we pose \cref{ass:hom_nullspace} for the rest of this section, stating that $\func$ is absolutely $\alpha$-homogeneous with trivial nullspace. 
This is a common assumption for eigenvalue problems of homogeneous functionals, see, e.g., \citet{hynd2017approximation,bungert2019asymptotic}, and can always be achieved by replacing the space $\X$ with the quotient space $\X/\calN(\func)$.

\begin{example}[The $p$-Dirichlet energies]
Let us study the case that $\func(u)=\int_\Omega|\nabla u|^p\d x$ equals the $p$-Dirichlet energy for $p>1$ or $\func(u)=\tv(u)$ equals the total variation for $p=1$. 
By extending to $\infty$ these functionals can be defined for $u\in L^p(\Omega)$, however, the appropriate space to make sure $\calN(\func)=\{0\}$ is given by $\X:=L^p(\Omega)/\{u\equiv const.\}$, equipped with the norm $\norm{u}_\X:=\inf_{c\in\R}\norm{u-c}_{L^p(\Omega)}$.
Here, Poincar\'{e}'s inequality makes sure that 
\begin{align*}
    \lambda_p := \inf_{u\in L^p(\Omega)} \frac{\func(u)}{\norm{u}_\X^p}= \inf_{u\in L^p(\Omega)} \frac{\func(u)}{\inf_{c\in\R}\norm{u-c}_{L^p(\Omega)}^p}>0.
\end{align*}
\end{example}
We now analyze the nonlinear power method
\begin{align}\label{eq:power_it_prox}
\begin{cases}
u^{k+1/2}&\in\pprox_{\sigma^k\func}(u^k),\\
u^{k+1}&=\frac{u^{k+1/2}}{\norm{u^{k+1/2}}},
\end{cases}
\end{align}
where $\sigma^k>0$ are regularization parameters that can depend on $u^k$, and $u^0$ is chosen such that $\norm{u^0}=1$. 
\revise{Note that the iteration \labelcref{eq:power_it_prox} chooses an arbitrary point in the (possibly multivalued) $p$-proximal operator which is reminiscent of subgradient descent. 
If the Banach space $\X$ is strictly convex and $p>1$, this choice is unique \citep{schuster2012regularization}.}

For proving convergence, we use similar arguments as \citet{bungert2020nonlinear} where proximal power iterations on Hilbert spaces were investigated.
To this end we first introduce suitable parameter choice rules for the parameter $\sigma^k$ in \labelcref{eq:power_it_prox}, which make sure that the power method is well-defined and we can controll their convergence.
\begin{definition}[Parameter rules]\label{def:parameter_rule}
Let $0<c<1$ be a constant.
\begin{itemize}
    \item The \emph{constant} parameter rule is given by $\sigma^k=\frac{c}{\func(u^0)}$ for all $k\in\N$.
    \item The \emph{variable} parameter rule is given by $\sigma^k=\frac{c}{\func(u^k)}$ for all $k\in\N$.
\end{itemize}
\end{definition}
Our first statement collects two important properties of the proximal power method \labelcref{eq:power_it_prox}, namely that its iterates are normalize and their energy decreases. 
The proof is based on \cref{prop:decrease_rayleigh} and works as the one by \citet{bungert2020nonlinear}.

\begin{prop}\label{prop:decrease_func}
Let \cref{ass:hom_nullspace} hold and $\sigma^k$ be given by a rule in \cref{def:parameter_rule}.
Then for all $k\in\N_0$ the iterative scheme \labelcref{eq:power_it_prox} is well-defined and satisfies
\begin{enumerate}
\item $\norm{u^k}=1$,
\item $\func(u^{k+1})\leq \func(u^k)$.
\end{enumerate}
\end{prop}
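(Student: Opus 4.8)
The plan is to prove the two claims by induction on $k$, showing simultaneously that the proximal step is well-defined (the argmin is attained) and that the claimed properties propagate. The base case $k=0$ is immediate since $\norm{u^0}=1$ by hypothesis. For the inductive step, assume $\norm{u^k}=1$ and that $\func(u^k)<\infty$ (which we should track as part of the induction, since it is needed to make the parameter rules meaningful and finite).

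First I would verify that $\pprox_{\sigma^k\func}(u^k)$ is nonempty. The functional $u\mapsto \frac1p\norm{u-u^k}^p+\sigma^k\func(u)$ is proper (it is finite at $u=u^k$ because $\func(u^k)<\infty$), convex, and lower semicontinuous by the fundamental assumption; moreover its sublevel sets are relatively compact because $\func\geq 0$ (absolute $\alpha$-homogeneity forces $\func\geq 0$ on its domain, with $\func(0)=0$) and the term $\frac1p\norm{u-u^k}^p$ is coercive, so \cref{ass:compactness} (which controls sublevel sets of $\norm{\cdot}+\func$) applies after noting that boundedness of the objective forces boundedness of $\norm{u-u^k}$ hence of $\norm{u}$. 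Thus a minimizer $u^{k+1/2}$ exists. Since $\calN(\func)=\{0\}$ under \cref{ass:hom_nullspace}, we have $\func(u)>0$ for $u\neq 0$; in particular $u^{k+1/2}\neq 0$ unless the whole objective is minimized at $0$, which cannot happen when $u^k\neq0$ — more carefully, plugging $v=u^k$ into the objective gives value $\sigma^k\func(u^k)$, whereas $v=0$ gives $\frac1p\norm{u^k}^p=\frac1p$; either way one argues $u^{k+1/2}\ne 0$ (the subtle point is ruling out $u^{k+1/2}=0$; this is where $\calN(\func)=\{0\}$ and $u^k\neq0$ enter, and it may require comparing the objective at $0$ with the objective at a small scaling $tu^k$ using $\alpha$-homogeneity). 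Hence $u^{k+1}=u^{k+1/2}/\norm{u^{k+1/2}}$ is well-defined and satisfies $\norm{u^{k+1}}=1$, giving claim (1).

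For claim (2) I would apply \cref{prop:decrease_rayleigh}: with $\hat u=0\in\argmin\func=\calN(\func)=\{0\}$ and the $\alpha$-homogeneous inequality \labelcref{ineq:decrease_rayleigh_p_discrete}, we get
\begin{align*}
\frac{\func(u^{k+1/2})}{\norm{u^{k+1/2}}^\alpha}\leq \frac{\func(u^k)}{\norm{u^k}^\alpha}=\func(u^k).
\end{align*}
But $\func(u^{k+1/2})/\norm{u^{k+1/2}}^\alpha=\func\bigl(u^{k+1/2}/\norm{u^{k+1/2}}\bigr)=\func(u^{k+1})$ by absolute $\alpha$-homogeneity. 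This yields $\func(u^{k+1})\leq\func(u^k)$, which is claim (2); in particular $\func(u^{k+1})<\infty$, closing the induction (and confirming the denominators in the variable parameter rule stay finite; one should also note $\func(u^k)>0$ for the variable rule to be well-defined, which follows from $u^k\ne0$ and $\calN(\func)=\{0\}$, unless $\func(u^0)=0$ in which case $u^0\in\calN(\func)=\{0\}$, contradicting $\norm{u^0}=1$).

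**The main obstacle** I anticipate is the nondegeneracy argument $u^{k+1/2}\neq 0$: one must exclude the possibility that the proximal minimizer collapses to the origin. The clean way is to use absolute $\alpha$-homogeneity to show that along the ray $t\mapsto tu^k$ the objective $\frac1p|t-1|^p+\sigma^k t^\alpha\func(u^k)$ has derivative at $t=0^+$ that is strictly negative (for $p>1$) or handled via the explicit comparison for $p=1$, so the minimizer cannot be at $t=0$, and more globally that $0$ is never a minimizer when $u^k\ne0$; alternatively one cites that this degeneracy is exactly the "finite extinction" phenomenon discussed in the appendix, and the parameter rules in \cref{def:parameter_rule} with $c<1$ are precisely calibrated (via $\sigma^k\func(u^k)=c<1$ or $\sigma^k\func(u^0)=c<1$ together with claim (2)) to prevent it. I would make this quantitative link explicit, since it is the one place where the constant $c<1$ is genuinely used.
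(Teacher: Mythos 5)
Your proposal is correct and follows essentially the route the paper intends: the paper gives no detailed proof but states that the argument rests on \cref{prop:decrease_rayleigh} (applied with $\hat u=0$, since $\argmin\func=\calN(\func)=\{0\}$, and combined with $\alpha$-homogeneity exactly as you do), while the nondegeneracy $u^{k+1/2}\neq 0$ is handled via the extinction-time bound \labelcref{ineq:lower_bd_sigma_**} of \cref{prop:extinct}, which together with $\sigma^k\func(u^k)\leq c<1$ gives $\sigma^k<\sigma_{**}(u^k)$ — precisely the quantitative link you flag at the end. Your identification of the role of $c<1$ and of the induction structure matches the paper's mechanism.
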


Our first statement is concerns the angle between $u^{k+1/2}$ and $u^k$, which necessarily should converge to zero as the proximal power method \labelcref{eq:power_it_prox} converges.
The proof can be found in the appendix.
\begin{prop}[Angular convergence]\label{prop:angle}
The iterates of the proximal power method \labelcref{eq:power_it_prox} satisfy
\begin{align}\label{eq:rate_angle}
    \lim_{k\to\infty}\norm{u^{k+1/2}-u^k}^p - \left|\norm{u^{k+1/2}}-1\right|^p = 0. 
\end{align}
If $\norm{\cdot}$ is a Hilbert norm and $p=2$ this can be simplified to
\begin{align}\label{eq:1-angle}
    \frac{\langle u^{k+1/2},u^k\rangle}{\norm{u^{k+1/2}}\norm{u^k}} =  1,%
\end{align}
which shows that the cosine of the angle converges to 1 and hence the angle to zero.
\end{prop}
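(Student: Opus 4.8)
The plan is to exploit minimality of the proximal step against two well-chosen competitors. Throughout write $t_k:=\norm{u^{k+1/2}}$, so that $u^{k+1/2}=t_k u^{k+1}$ and, by \cref{prop:decrease_func}, $\norm{u^k}=1$ for all $k$ and $k\mapsto\func(u^k)$ is non-increasing (hence convergent, being nonnegative). Since $u^{k+1/2}$ minimizes $v\mapsto\frac1p\norm{v-u^k}^p+\sigma^k\func(v)$, the competitor $v=0$ gives $\frac1p\norm{u^{k+1/2}-u^k}^p+\sigma^k\func(u^{k+1/2})\le\frac1p$, whence $t_k\le 2$ and $\sigma^k\func(u^{k+1/2})\le\frac1p$. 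The competitor $v=t_k u^k$, which satisfies $\norm{v-u^k}=\abs{t_k-1}$ and $\func(v)=t_k^\alpha\func(u^k)$ by absolute $\alpha$-homogeneity, combined with $\func(u^{k+1/2})=t_k^\alpha\func(u^{k+1})$, yields after rearranging
\begin{align*}
0\le\norm{u^{k+1/2}-u^k}^p-\abs{t_k-1}^p\le p\,\sigma^k t_k^\alpha\bigl(\func(u^k)-\func(u^{k+1})\bigr),
\end{align*}
the left inequality being the reverse triangle inequality $\norm{u^{k+1/2}-u^k}\ge\abs{\,\norm{u^{k+1/2}}-\norm{u^k}\,}$.

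It then remains to show that the right-hand side vanishes as $k\to\infty$. For the constant parameter rule $\sigma^k$ is fixed, so since $t_k^\alpha\le 2^\alpha$ and $\sum_k(\func(u^k)-\func(u^{k+1}))\le\func(u^0)<\infty$ one even gets $\sum_k\bigl(\norm{u^{k+1/2}-u^k}^p-\abs{t_k-1}^p\bigr)<\infty$. For the variable rule $\sigma^k=c/\func(u^k)$, one rewrites $\sigma^k t_k^\alpha(\func(u^k)-\func(u^{k+1}))=c\,t_k^\alpha\bigl(1-\func(u^{k+1})/\func(u^k)\bigr)$ and concludes, exactly as in \citet{bungert2020nonlinear}, using $t_k^\alpha\le2^\alpha$ together with the fact that the energies stay bounded away from zero, so that $\func(u^{k+1})/\func(u^k)\to1$. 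This establishes \labelcref{eq:rate_angle}.

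For the Hilbert case with $p=2$, expanding $\norm{u^{k+1/2}-u^k}^2=t_k^2-2\langle u^{k+1/2},u^k\rangle+1$ and subtracting $(t_k-1)^2=t_k^2-2t_k+1$ (using $\norm{u^k}=1$) gives the algebraic identity
\begin{align*}
\norm{u^{k+1/2}-u^k}^2-\bigl(\norm{u^{k+1/2}}-1\bigr)^2=2\norm{u^{k+1/2}}\left(1-\frac{\langle u^{k+1/2},u^k\rangle}{\norm{u^{k+1/2}}\norm{u^k}}\right).
\end{align*}
The right factor equals $1-\langle u^{k+1},u^k\rangle$ with $u^{k+1},u^k$ unit vectors; since $\norm{u^{k+1/2}}\le 2$ and, by well-definedness of the iteration, $\norm{u^{k+1/2}}$ does not degenerate to $0$, \labelcref{eq:rate_angle} forces $\langle u^{k+1/2},u^k\rangle/(\norm{u^{k+1/2}}\norm{u^k})\to1$, i.e.\ the cosine of the angle between successive iterates converges to one.

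The main obstacle is the second step, i.e.\ controlling the right-hand side of the sandwich under the variable parameter rule: whereas for the constant rule the telescoping sum does the job immediately, for the variable rule $\sigma^k$ may grow, so one genuinely has to exclude $\func(u^k)\to0$ in order to turn the vanishing energy gaps into $\func(u^{k+1})/\func(u^k)\to1$; this is where the triviality of the nullspace (\cref{ass:hom_nullspace}) enters. A minor technical point is keeping $\norm{u^{k+1/2}}$ away from $0$ so that the Hilbert-space cosine identity is not of the indeterminate form $0/0$, which is guaranteed by \cref{prop:decrease_func}.
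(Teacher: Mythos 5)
Your proof is correct and follows essentially the same route as the paper's: testing the minimality of $u^{k+1/2}$ against the competitor $\norm{u^{k+1/2}}u^k$, using $\alpha$-homogeneity to reduce the right-hand side to a telescoping energy difference, and the reverse triangle inequality for nonnegativity. Your derivation of the uniform bound $\norm{u^{k+1/2}}\le 2$ from the competitor $v=0$ is a small but welcome refinement (the paper instead invokes boundedness of the convergent sequence $u^{k+1/2}$), and your explicit treatment of the variable parameter rule spells out what the paper leaves implicit when it divides by $\sigma^k$.
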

Now we can show \revise{subsequential convergence of the proximal power method \labelcref{eq:power_it_prox}} to an eigenvector of the $p$-proximal operator.
The proof which is very similar to \citet{bungert2020nonlinear} can be found in the appendix.
\begin{thm}[Convergence of the proximal power method]\label{thm:cvgc_power_method}
Let \cref{ass:compactness} and \cref{ass:hom_nullspace} be fulfilled and assume that $\func$ satisfies the coercivity condition
\begin{align}\label{eq:coercivity}
    \inf_{u\in\X}\frac{\func(u)}{\norm{u}^\alpha}>0.
\end{align}
Let the sequence $(u^k)_{k\in\N}$ be generated by \labelcref{eq:power_it_prox} with a step size rule from \cref{def:parameter_rule}.
Then it holds
\begin{itemize}
    \item the step sizes $(\sigma^k)_{k\in\N}$ are non-decreasing and converge to $\sigma^*\in(0,\infty)$,
    \item the sequence $(u^k)_{k\in\N}$ admits a subsequence converging to some $u^*\in\X\setminus\{0\}$,
    \item there exists $\mu\in(0,1]$ such that 
    \begin{align}\label{eq:eigenvec_prox}
        \mu u^* \in \pprox_{\sigma^*\func}(u^*).
    \end{align}
\end{itemize}
Furthermore, if $p>1$ it holds $\mu<1$.
\end{thm}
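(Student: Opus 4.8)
The plan is to adapt the argument of \citet{bungert2020nonlinear} for Hilbertian one-homogeneous functionals to the present Banach/$p$-proximal setting, combining the monotonicity in \cref{prop:decrease_func} and \cref{prop:angle} with the compactness of \cref{ass:compactness} and a passage to the limit in the optimality condition \labelcref{eq:OC}. For the step sizes: by \cref{prop:decrease_func} the iterates stay normalized and $\func(u^{k+1})\le\func(u^k)$, so $(\func(u^k))_k$ is non-increasing, while coercivity \labelcref{eq:coercivity} together with $\norm{u^k}=1$ gives $\func(u^k)\ge\kappa:=\inf_{u\neq0}\func(u)/\norm{u}^\alpha>0$; hence $\func(u^k)\searrow\func^*\in[\kappa,\func(u^0)]$. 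For the constant rule $\sigma^k$ is constant, and for the variable rule $\sigma^k=c/\func(u^k)$ is non-decreasing with limit $\sigma^*:=c/\func^*$, so in both cases $\sigma^k\nearrow\sigma^*\in(0,\infty)$.

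Next I would extract a subsequential limit and pass to the limit in the proximal problem. Inserting $v=0\in\argmin\func$ into the definition \labelcref{eq:p-prox} of $u^{k+1/2}\in\pprox_{\sigma^k\func}(u^k)$ yields $\tfrac1p\norm{u^{k+1/2}-u^k}^p+\sigma^k\func(u^{k+1/2})\le\tfrac1p$, so $\norm{u^{k+1/2}}\le 2$ and, using $\sigma^k\ge\sigma^0$, $\func(u^{k+1/2})\le(p\sigma^0)^{-1}$ are uniformly bounded. By \cref{ass:compactness}, along a subsequence $u^{k_j}\to u^*$ in norm, and after further extractions $u^{k_j+1/2}\to z^*$, $\norm{u^{k_j+1/2}}\to\mu\ge0$ and $u^{k_j+1}\to u^{**}$ with $\norm{u^{**}}=1$ and $z^*=\mu u^{**}$; since $\norm{u^{k_j}}=1$ we get $\norm{u^*}=1$, and since $\calN(\func)=\{0\}$ and $\func\ge0$ this forces $\func(u^*)>0$. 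Letting $j\to\infty$ in $\tfrac1p\norm{u^{k_j+1/2}-u^{k_j}}^p+\sigma^{k_j}\func(u^{k_j+1/2})\le\tfrac1p\norm{v-u^{k_j}}^p+\sigma^{k_j}\func(v)$ for fixed $v$, using $\sigma^{k_j}\to\sigma^*$, continuity of the norm and lower semicontinuity of $\func$, gives $z^*\in\pprox_{\sigma^*\func}(u^*)$.

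It then remains to identify $z^*$ as a positive multiple of $u^*$. First, $z^*\neq0$: otherwise $0\in\pprox_{\sigma^*\func}(u^*)$, so $t=0$ would minimize $t\mapsto\tfrac1p\abs{t-1}^p+\sigma^*t^\alpha\func(u^*)$ on $[0,\infty)$, whereas this map has right-derivative $-1$ at $0$ for $\alpha>1$ and $-1+\sigma^*\func(u^*)$ for $\alpha=1$, which is negative since the parameter rules force $\sigma^*\func(u^*)\le c<1$; hence $\mu=\norm{z^*}>0$. Now \cref{prop:angle} enters: since $u^{k+1}$ is a unit vector, $\abs{\,\norm{u^{k+1/2}}-1\,}=\norm{u^{k+1/2}-u^{k+1}}$, so \labelcref{eq:rate_angle} expresses that $u^{k+1/2}$ becomes asymptotically equidistant from $u^k$ and from $u^{k+1}$; passing to the limit gives $\norm{\mu u^{**}-u^*}=\abs{\mu-1}=\norm{\mu u^{**}-u^{**}}$, i.e.\ equality in the reverse triangle inequality for $\mu u^{**}$ against each of the unit vectors $u^*$ and $u^{**}$. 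In a Hilbert space this is the equality case of Cauchy--Schwarz and forces $u^{**}=u^*$, whence $z^*=\mu u^*$ and $\mu u^*\in\pprox_{\sigma^*\func}(u^*)$.

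Finally I would bound $\mu$. Testing the defining inequality of $\pprox_{\sigma^*\func}(u^*)$ with $v=u^*$ and using $\func(\mu u^*)=\mu^\alpha\func(u^*)$ gives $\sigma^*\mu^\alpha\func(u^*)\le\sigma^*\func(u^*)$, hence $\mu\le1$ because $\func(u^*)>0$; and if $p>1$ and $\mu=1$, then \labelcref{eq:OC} provides $\eta\in\Phi^p_\X(0)=\{0\}$ and $\sg\in\partial\func(u^*)$ with $\eta+\sigma^*\sg=0$, so $\sg=0$, whence $\func(u^*)\le\func(v)$ for all $v$, i.e.\ $u^*\in\argmin\func=\calN(\func)=\{0\}$, contradicting $\norm{u^*}=1$, so $\mu<1$. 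The step I expect to be the main obstacle is the identification $z^*=\mu u^*$: in a non-strictly-convex Banach space equality in the reverse triangle inequality is strictly weaker than colinearity, so beyond \cref{prop:angle} one additionally exploits the structure of the duality map $\Phi^p_\X$ in the optimality condition \labelcref{eq:OC} for $z^*$ to close this gap.
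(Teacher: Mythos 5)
Your proposal follows the same skeleton as the paper's proof (monotone convergence of the step sizes, compactness of $(u^k)$ and $(u^{k+1/2})$, passage to the limit in the minimization problem defining the $p$-proximal, nontriviality of the limit, and the bound on $\mu$), but several sub-steps are carried out differently, and in each case your version is at least as sound. For nontriviality of $z^*$ the paper invokes \cref{prop:extinct} via the chain $\sigma^*\leq\sigma(u^*)<\sigma_{**}(u^*)$ (using \labelcref{ineq:lower_bd_sigma_**}), whereas you restrict the proximal objective to the ray $\{tu^*\st t\geq 0\}$ and compute a negative right derivative at $t=0$; both arguments hinge on $\sigma^*\func(u^*)\leq c<1$, but yours is self-contained and avoids the appendix machinery. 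Your derivation of $\mu\leq 1$ by testing with $v=u^*$ and using $\alpha$-homogeneity works uniformly in $p\geq1$, which is cleaner than the paper's $p=1$-specific contradiction argument; your exclusion of $\mu=1$ for $p>1$ via $\Phi^p_\X(0)=\{0\}$ in \labelcref{eq:OC} and triviality of $\calN(\func)$ is equivalent to the paper's computation with the homogeneity of $\Phi^p_\X$ and $\partial\func$. The most valuable difference is that you make explicit the identification step $z^*=\mu u^*$: the paper simply asserts ``from the scheme and the convergence of $u^k$ and $u^{k+1/2}$ it now follows that $u^*=\tilde u/\norm{\tilde u}$,'' which silently requires that the subsequential limits of $u^{k_j}$ and $u^{k_j+1}$ coincide. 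Your use of \cref{prop:angle} closes this in the Hilbert case ($p=2$) via the equality case of Cauchy--Schwarz, and the obstacle you flag for general Banach spaces---that equality in the reverse triangle inequality does not imply colinearity without strict convexity---is genuine and is not addressed in the paper's own proof either. So your write-up is correct where the paper's is correct, and more honest about the one step where both need an additional hypothesis (strict convexity of $\X$, or the Hilbert structure) to be fully rigorous.
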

\begin{rem}
For $p=1$ it can happen that $u^*\in\pprox_{\sigma^*\func}(u^*)$ although $u^*\neq 0$ is not a minimizer of $\func$, which is known as exact penalization \citep{bungert2019solution,bungert2020variational}.
It occurs for positive $\sigma^*>0$ smaller than the value $\sigma_*(u^*)$ (cf. \cref{prop:exact_recon}).
The underlying reason that in this case the proximal power iteration may converge to eigenvectors with eigenvalue one is that for $p=1$ the upper bound of the exact reconstruction time \labelcref{ineq:upper_bound_alpha*} coincides with the lower bound for the extinction time \labelcref{ineq:lower_bd_sigma_**} and both become sharp for eigenvectors.
\end{rem}

For $p>1$ eigenvectors of the $p$-proximal are in one-to-one correspondence to eigenvectors in the sense of \labelcref{eq:gen_ev_prob}.
Hence, in this case the limit of the proximal power method is a solution to this nonlinear eigenvalue problem.

\begin{thm}[Convergence to subdifferential eigenvector]\label{thm:cvgc_to_subdiff_ev}
Assume that $p>1$.
Then under the conditions of \cref{thm:cvgc_power_method} there exists $\lambda>0$ and $u^*\in\X$ such that, up to a subsequence, the sequence $(u^k)_{k\in\N}$ generated by \labelcref{eq:power_it_prox} converges to $u^*$ which satisfies
\begin{align*}
    -\lambda\Phi_\X^p(u^*) + \partial\func(u^*) \ni 0.
\end{align*}
\end{thm}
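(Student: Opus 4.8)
The plan is to deduce the statement directly from \cref{thm:cvgc_power_method} together with the optimality characterization of the $p$-proximal operator in \cref{prop:duality_proximal} and the homogeneity of the duality map. \cref{thm:cvgc_power_method} already delivers, up to a subsequence, a limit $u^*\in\X\setminus\{0\}$, a limiting step size $\sigma^*\in(0,\infty)$, and a scalar $\mu$ with $\mu u^*\in\pprox_{\sigma^*\func}(u^*)$; moreover, since $p>1$, we have $\mu\in(0,1)$ (strictly less than $1$). Hence the only task is to rewrite this ``proximal eigenvector'' relation as the subdifferential eigenvalue relation \labelcref{eq:gen_ev_prob}. Recall also that under \cref{ass:hom_nullspace} one has $\argmin\func=\calN(\func)=\{0\}$, so the relevant instance of \labelcref{eq:gen_ev_prob} with $\hat u=0$ reads $-\lambda\Phi_\X^p(u^*)+\partial\func(u^*)\ni 0$, using the $(p-1)$-homogeneity of $\Phi_\X^p$ with $c=-1$.

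First I would apply the optimality condition \labelcref{eq:OC}: from $\mu u^*\in\pprox_{\sigma^*\func}(u^*)$ there exist $\eta\in\Phi_\X^p(\mu u^*-u^*)=\Phi_\X^p((\mu-1)u^*)$ and $\sg\in\partial\func(\mu u^*)$ with $0=\eta+\sigma^*\sg$. Next I would strip off the scalings. Since $\Phi_\X^p(cu)=c\abs{c}^{p-2}\Phi_\X^p(u)$ and $\mu-1<0$, we get $\Phi_\X^p((\mu-1)u^*)=-(1-\mu)^{p-1}\Phi_\X^p(u^*)$, so $\eta=-(1-\mu)^{p-1}\tilde\eta$ for some $\tilde\eta\in\Phi_\X^p(u^*)$. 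Because $\func$ is absolutely $\alpha$-homogeneous, $\partial\func$ is $(\alpha-1)$-homogeneous, hence $\partial\func(\mu u^*)=\mu^{\alpha-1}\partial\func(u^*)$ for $\mu>0$, giving $\sg=\mu^{\alpha-1}\tilde\sg$ with $\tilde\sg\in\partial\func(u^*)$. Substituting into $0=\eta+\sigma^*\sg$ yields $(1-\mu)^{p-1}\tilde\eta=\sigma^*\mu^{\alpha-1}\tilde\sg$, that is, $-\lambda\tilde\eta+\tilde\sg=0$ with
\begin{align*}
  \lambda:=\frac{(1-\mu)^{p-1}}{\sigma^*\mu^{\alpha-1}},
\end{align*}
which is $>0$ because $\mu\in(0,1)$ makes $(1-\mu)^{p-1},\mu^{\alpha-1}>0$ and $\sigma^*\in(0,\infty)$. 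Since $\tilde\eta\in\Phi_\X^p(u^*)$ and $\tilde\sg\in\partial\func(u^*)$, this is precisely $-\lambda\Phi_\X^p(u^*)+\partial\func(u^*)\ni 0$.

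I do not anticipate a real obstacle, because all the analytic difficulty---compactness, monotonicity and boundedness of the step sizes, and the existence of a proximal eigenvector---is already contained in \cref{thm:cvgc_power_method}. The only point demanding care is the sign-and-exponent bookkeeping in the identity $\Phi_\X^p(cu)=c\abs{c}^{p-2}\Phi_\X^p(u)$ when $1<p<2$, where $p-2<0$: here it is crucial that $c=\mu-1\neq 0$ (guaranteed by $\mu<1$, which is where the hypothesis $p>1$ enters) and that $u^*\neq 0$, so that the homogeneity identity applies; and one should note that these identities are equalities of sets, so rescaling the particular selected elements $\eta$ and $\sg$ preserves the single relation $0=\eta+\sigma^*\sg$.
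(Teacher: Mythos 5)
Your proposal is correct and follows essentially the same route as the paper: the paper's proof of \cref{thm:cvgc_to_subdiff_ev} is a one-liner that sets $\lambda=(1-\mu)\abs{1-\mu}^{p-2}/(\sigma^*\mu^{\alpha-1})$, relying on exactly the computation you carry out — applying the optimality condition \labelcref{eq:OC} to $\mu u^*\in\pprox_{\sigma^*\func}(u^*)$ and stripping the scalings via the $(p-1)$-homogeneity of $\Phi_\X^p$ and the $(\alpha-1)$-homogeneity of $\partial\func$ (that computation appears in the paper inside the proof of \cref{thm:cvgc_power_method}). Your bookkeeping, including the observation that $\mu<1$ strictly for $p>1$ is what makes $\lambda>0$, matches the paper's argument.
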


We conclude this section by proving that under generic conditions the proximal power method \labelcref{eq:power_it_prox} preserves positivity. 
In many scenarios ground states in the sense of \cref{def:ground_states} can be characterized as unique non-negative eigenvectors \citep{bungert2020structural,roith2020continuum,boyd1974power,bungert2019asymptotic}, in which case \cref{thm:cvgc_to_subdiff_ev} implies convergence to \revise{the unique} ground state.

\begin{prop}
Assume that $\X$ is a strictly convex Banach lattice and $\func(|u|)\leq \func(u)$ for all $u\in\X$.
If the proximal power method \labelcref{eq:power_it_prox} is initialized with $u^0\geq 0$, then all iterates of \labelcref{eq:power_it_prox} are non-negative and hence also the eigenvector $u^*$ from \cref{thm:cvgc_power_method} is non-negative.
\end{prop}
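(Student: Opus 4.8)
The plan is to propagate non-negativity through the iteration by showing that each proximal half-step is forced to return a nonnegative element, and then to pass to the limit using closedness of the positive cone. Throughout I will use two elementary Banach-lattice facts: the norm is monotone, i.e. $0\le x\le y$ implies $\norm{x}\le\norm{y}$, and the pointwise estimate $\bigl|\,|a|-|b|\,\bigr|\le|a-b|$ holds for all $a,b\in\X$.

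First I would carry out the inductive step. Suppose $u^k\ge 0$ (the base case $u^0\ge 0$ being the hypothesis), so that $|u^k|=u^k$, and let $u^{k+1/2}\in\pprox_{\sigma^k\func}(u^k)$. Put $w:=|u^{k+1/2}|\ge 0$. By the standing assumption $\func(|u|)\le\func(u)$ we have $\func(w)\le\func(u^{k+1/2})$, while the lattice estimate combined with monotonicity of the norm gives
\begin{align*}
    \norm{w-u^k} &= \norm{\,\bigl|\,|u^{k+1/2}|-|u^k|\,\bigr|\,} \\
    &\le \norm{\,|u^{k+1/2}-u^k|\,} = \norm{u^{k+1/2}-u^k}.
\end{align*}
Hence $w$ attains an objective value in \labelcref{eq:p-prox} no larger than that of the minimizer $u^{k+1/2}$, so $w\in\pprox_{\sigma^k\func}(u^k)$ as well. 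Since $\X$ is strictly convex, for $p>1$ the $p$-proximal operator is single-valued (cf. the discussion after \labelcref{eq:power_it_prox}), which forces $u^{k+1/2}=w\ge 0$; for $p=1$ one simply selects the nonnegative minimizer $|u^{k+1/2}|$. Dividing a nonnegative vector by the positive scalar $\norm{u^{k+1/2}}$ preserves the sign, so $u^{k+1}\ge 0$, completing the induction.

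It then remains to pass to the limit: by \cref{thm:cvgc_power_method} the eigenvector $u^*$ is the norm-limit of a subsequence of $(u^k)$, each term of which lies in the positive cone $\X_+$; since $\X_+$ is norm-closed in a Banach lattice, $u^*\ge 0$.

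The one place that requires care is the passage from the fact that $|u^{k+1/2}|$ is a minimizer to the conclusion that $u^{k+1/2}$ itself is nonnegative: this is exactly where strict convexity of $\X$ (and, for $p>1$, the resulting single-valuedness of the $p$-proximal map) is needed, and it is the only point at which the lattice structure and the proximal operator genuinely interact. The supporting facts — monotonicity of the lattice norm, the inequality $\bigl|\,|a|-|b|\,\bigr|\le|a-b|$, strict convexity of $\norm{\cdot-u^k}^p$ for $p>1$, and norm-closedness of $\X_+$ — are all standard and I would only cite them.
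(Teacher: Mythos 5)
Your inductive skeleton, the two lattice facts you invoke, and the comparison of objective values at $|u^{k+1/2}|$ versus $u^{k+1/2}$ are all correct and match the ingredients of the paper's proof. For $p>1$ your argument is complete: $|u^{k+1/2}|$ attains the same (minimal) objective value, and strict convexity of $\X$ together with $p>1$ makes $u\mapsto\frac1p\norm{u-u^k}^p+\sigma^k\func(u)$ strictly convex, so the minimizer is unique and $u^{k+1/2}=|u^{k+1/2}|\geq 0$.

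The gap is at $p=1$, and it is a real one: there the $1$-proximal operator is genuinely multivalued, and the proposition claims that \emph{all} iterates of \labelcref{eq:power_it_prox} are non-negative, where the iteration picks an \emph{arbitrary} element of $\pprox_{\sigma^k\func}(u^k)$. Your fallback ``one simply selects the nonnegative minimizer'' proves a weaker statement (existence of a non-negative selection), not the proposition as written, and it silently changes the algorithm. The paper closes this by a midpoint argument that works uniformly for all $p\geq 1$: it tests the competitor $\tilde u:=\tfrac12\bigl(u^{k+1/2}+|u^{k+1/2}|\bigr)$, writes $u^k=\tfrac12(u^k+|u^k|)$, and uses convexity of $\tfrac1p\norm{\cdot}^p$ and of $\func$ together with your two inequalities to show $\tilde u$ is also a minimizer; consequently every inequality in that chain is an equality, and in particular the triangle inequality $\norm{\tilde u-u^k}\leq\tfrac12\norm{u^{k+1/2}-u^k}+\tfrac12\norm{|u^{k+1/2}|-u^k}$ is saturated with both summands equal. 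Strict convexity of the \emph{norm} (not single-valuedness of the prox) then forces $u^{k+1/2}-u^k=|u^{k+1/2}|-u^k$, i.e.\ $u^{k+1/2}=|u^{k+1/2}|\geq 0$, for every admissible choice of $u^{k+1/2}$ and every $p\geq 1$. If you run this equality-case analysis on your own chain of inequalities (after noting that the midpoint of two minimizers of a convex objective is again a minimizer), your proof is repaired; the final passage to the limit via norm-closedness of the positive cone is fine as you wrote it.
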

\begin{proof}
The proof works inductively.
Assuming that $u^k\geq 0$ we would like to show that $u^{k+1/2}\geq 0$ which implies the same for $u^{k+1}$.
To this end, we define a competitor for $u^{k+1/2}$ by setting $\tilde{u}:=({u^{k+1/2}+|u^{k+1/2}|})/{2} \geq 0$.
Using the assumptions, we can calculate
\begin{align*}
    &\phantom{=}\;\frac{1}{p}\norm{\tilde{u}-u^{k}}^p + \sigma^k\func(\tilde{u}) \\
    &=\frac{1}{p}\norm{\frac{u^{k+1/2}+|u^{k+1/2}|}{2}-\frac{u^{k}+|u^{k}|}{2}}^p + \sigma^k\func\left(\frac{u^{k+1/2}+|u^{k+1/2}|}{2}\right) \\
    &\leq \frac{1}{2p}\norm{u^{k+1/2}-u^k}^p + \frac{\sigma^k}{2}\func(u^{k+1/2}) +
    \frac{1}{2p}\norm{|u^{k+1/2}|-|u^k|}^p + \frac{\sigma^k}{2}\func(|u^{k+1/2}|) \\
    &\leq \frac{1}{p}\norm{u^{k+1/2}-u^k}^p + \sigma^k\func(u^{k+1/2}) 
    \leq \frac{1}{p}\norm{u-u^k}^p + \sigma^k\func(u),\qquad\forall u\in\X.
\end{align*}
Hence, $\tilde{u}\in\pprox_{\sigma^k\func}(u^k)$ and using the strict convexity of $\X$ we infer $u^{k+1/2}=\tilde{u}\geq 0$.
\end{proof}

\section{\texorpdfstring{$\Gamma$}{Gamma}-Convergence implies Convergence of Ground States}
\label{sec:gamma-cvgc}

Having studied the both flows and nonlinear power methods which converge to solutions of nonlinear eigenvalue problems, we now study how eigenfunctions behave under approximation.
More precisely, we consider the convergence of ground states of $\Gamma$-converging functionals, as defined in \cref{def:ground_states}.

An important field of application for our results are continuum limits, which study $\Gamma$-convergence of functionals defined on grids or weighted graphs towards their continuum versions as the grids / graphs become denser.
Because of their applicability in graph clustering and other data science tasks, energies depending on the gradient of a function have been investigated a lot in the last years.
E.g., these studies proved continuum limits for the graph and grid total variation \citep{trillos2015continuum,chambolle2020approximating}, the $p$-Dirichlet energies \citep{slepcev2019analysis}, and a functional related to the Lipschitz constant \citep{roith2020continuum}.

While most of these works applied their results to minimization problems featuring the respective functionals, \citet{roith2020continuum} observed that $\Gamma$-convergence of the investigated absolutely one-homogeneous functional implies also convergence of its ground states.
Apart from this paper, there is not much literature on the stability of ground states or more general eigenvector problems under Gamma convergence; the only other references we found is \citet{alvarez2010asymptotic}, which studies linear eigenvalue problems of a specific form and \citet{brasco2015stability} which deal with the convergence of fractional to local $p$-Laplacian eigenfunctions.

In this section we generalize the result from \citet{roith2020continuum} to general convex functionals.
We start with recapping the definition of $\Gamma$-convergence \citep{braides2002gamma} and then prove the convergence result.

\begin{definition}[$\Gamma$-convergence]
A sequence of functionals $(\func_k)_{k\in\N}$ on a metric space $\X$ is said to $\Gamma$-converge to a functional $\func$ on $\X$ (written as $\func_k\gammato\func$) if the following two conditions hold.
\begin{itemize}
    \item \textbf{liminf inequality:} For all sequences $(u_k)_{k\in\N}\subset\X$ which converge to $u\in\X$ it holds
    \begin{align*}
        \func(u) \leq \liminf_{k\to\infty} \func_k(u_k).
    \end{align*}
    \item \textbf{limsup inequality:} For all $u\in\X$ there exists a sequence $(u_k)_{k\in\N}\subset\X$ (called recovery sequence) which converges to $u\in\X$ and satisfies
    \begin{align*}
        \func(u) \geq \limsup_{k\to\infty} \func_k(u_k).
    \end{align*}
\end{itemize}
\end{definition}

We can now prove the main theorem of this section, stating that a convergent sequence of ground states converges to a ground state of the limiting functional.
For this we need to assume that the minimizers of the limiting functional can be approximated by the ones of the approximating functionals in the following distance
\begin{align*}
    \d(A,B) = \sup_{x\in A}\inf_{y\in B}\norm{x-y},\quad A,B\subset\X,
\end{align*}
which constitutes a part of the Hausdorff distance.
Note that this is quite a weak condition and in typical applications, as the ones described above, the set of minimizers of the approximating and limiting functionals even coincide.

\begin{thm}\label{thm:cvgc_ground_states}
Assume that 
\begin{subequations}
\begin{alignat}{2}
    \label{eq:ass_gamma}
    \func_k&\gammato \func,\quad &k\to\infty,\\
    \label{eq:ass_argmins}
    \d(\argmin\func,\argmin\func_k) &\to 0,\quad &k\to\infty.
\end{alignat}
\end{subequations}
Let $(u_k,\hat{u}_k)\subset\X\times\argmin\func_k$ be a sequence of $p$-ground states of $\func_k$ and assume that $u_k\to u_*$ and $\hat{u}_k\to\hat{u}_*$. 
Then $(u_*,\hat{u}_*)\subset\X\times\argmin\func$ is a $p$-ground state of $\func$ and it holds
\begin{align}\label{eq:approx_ground_states}
    \frac{\func(u_*)-\func(\hat{u}_*)}{\norm{u_*-\hat{u}_*}^p} 
    = \lim_{k\to\infty} \frac{\func_k(u_k)-\func_k(\hat{u}_k)}{\norm{u_k-\hat{u}_k}^p}.
\end{align}
\end{thm}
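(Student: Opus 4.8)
The plan is to show both the "liminf-type" and "limsup-type" bounds on the Rayleigh quotient along the sequence, thereby identifying the limit as a ground state and establishing the convergence of the quotients. Write $R_k(u,\hat u) := \frac{\func_k(u)-\func_k(\hat u)}{\norm{u-\hat u}^p}$ and $R(u,\hat u) := \frac{\func(u)-\func(\hat u)}{\norm{u-\hat u}^p}$, and let $\lambda_k := R_k(u_k,\hat u_k)$ denote the optimal Rayleigh value of $\func_k$ (the $\argmax$-$\argmin$ value from \cref{def:ground_states}), and $\lambda := R(u_*,\hat u_*)$. First I would establish the lower bound $\liminf_{k\to\infty}\lambda_k \geq \lambda$: this is really just a statement that $R$ is ``lower semicontinuous from below'' along the ground-state sequence. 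From the $\Gamma$-liminf inequality applied to $u_k\to u_*$ we get $\func(u_*)\leq\liminf_k\func_k(u_k)$, and similarly $\func(\hat u_*)\leq\liminf_k\func_k(\hat u_k)$. Here the subtlety is that we need an \emph{upper} bound on $\func_k(\hat u_k)$ in the numerator; since $\hat u_k\in\argmin\func_k$, we have $\func_k(\hat u_k) = \min\func_k$, and $\Gamma$-convergence together with \labelcref{eq:ass_argmins} and the existence of recovery sequences yields $\min\func_k\to\min\func = \func(\hat u_*)$ (I would prove this small lemma: a recovery sequence for any $\hat u\in\argmin\func$ gives $\limsup_k\min\func_k\leq\limsup_k\func_k(u_k^{\mathrm{rec}})\leq\func(\hat u) = \min\func$, and the $\Gamma$-liminf on any sequence of near-minimizers gives the reverse). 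Combining, $\liminf_k\big(\func_k(u_k)-\func_k(\hat u_k)\big)\geq\func(u_*)-\func(\hat u_*)$, and since $\norm{u_k-\hat u_k}^p\to\norm{u_*-\hat u_*}^p$ by continuity of the norm, we obtain $\liminf_k\lambda_k\geq\lambda$ — provided $\norm{u_*-\hat u_*}>0$, a point I return to below.

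Next I would establish the upper bound $\limsup_{k\to\infty}\lambda_k\leq R(v,\hat v)$ for \emph{every} competitor pair $(v,\hat v)\in\X\times\argmin\func$, which simultaneously shows $\limsup_k\lambda_k\leq\lambda$ (taking $(v,\hat v)=(u_*,\hat u_*)$) and that $(u_*,\hat u_*)$ is optimal for $R$, i.e.\ a ground state. Fix such a pair and let $v_k\to v$ be a recovery sequence for $v$ with $\limsup_k\func_k(v_k)\leq\func(v)$. By assumption \labelcref{eq:ass_argmins}, I can pick $\hat v_k\in\argmin\func_k$ with $\norm{\hat v_k-\hat v}\to 0$; then $\func_k(\hat v_k)=\min\func_k\to\min\func=\func(\hat v)$ by the lemma above. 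Since $(u_k,\hat u_k)$ is a ground state of $\func_k$, by the $\argmin$ part of \cref{def:ground_states} it minimizes $R_k(\cdot,\hat u_k)$ over $\X$, and by the $\argmax$ part $\lambda_k=R_k(u_k,\hat u_k)\leq R_k(v_k,\hat u_k)$ is not immediately comparable to $R_k(v_k,\hat v_k)$ — so I would instead use the maximality in the second slot more carefully: $\lambda_k = \max_{\hat u\in\argmin\func_k}\min_{u}R_k(u,\hat u) \geq \min_u R_k(u,\hat v_k)$ gives a lower bound, which is the wrong direction. The correct route: $\lambda_k = \min_u R_k(u,\hat u_k)$ for the \emph{specific} maximizing $\hat u_k$, hence $\lambda_k\leq R_k(v_k,\hat u_k)$; now I use that $\hat u_k\to\hat u_*\in\argmin\func$ so $\func_k(\hat u_k)\to\func(\hat u_*)=\min\func$, and estimate $\limsup_k\lambda_k\leq\limsup_k\frac{\func_k(v_k)-\func_k(\hat u_k)}{\norm{v_k-\hat u_k}^p}\leq\frac{\func(v)-\func(\hat u_*)}{\norm{v-\hat u_*}^p}$. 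Since $\func(\hat u_*)=\min\func=\func(\hat v)$, the numerator is $\func(v)-\func(\hat v)$; and since $\hat u_*\in\argmin\func$ is a legitimate element over which the outer $\argmax$ in \labelcref{eq:ground_states} ranges, we have $R(v,\hat u_*)\le R(v,\hat v)$ is again not automatic. The cleanest formulation is therefore: $\limsup_k\lambda_k\leq\inf_{v\in\X}R(v,\hat u_*)\leq R(u_*,\hat u_*)=\lambda$, using $\hat u_*\in\argmin\func$. Combined with the liminf bound this forces $\lambda_k\to\lambda$, which is \labelcref{eq:approx_ground_states}.

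Finally, to conclude that $(u_*,\hat u_*)$ is a ground state of $\func$, I would argue that $\lambda = \inf_{v\in\X}R(v,\hat u_*)$ (from the chain above, since $\lambda\leq\inf_v R(v,\hat u_*)\leq R(u_*,\hat u_*)=\lambda$), i.e.\ $u_*$ minimizes the inner problem for $\hat u_*$; and that $\hat u_*$ achieves the outer maximum, for which I would need: for any $\hat u\in\argmin\func$, $\min_v R(v,\hat u)\leq\lambda$. This follows by running the same recovery-sequence argument with $\hat u$ in place of $\hat u_*$: pick $\hat u^{(k)}\in\argmin\func_k$ with $\hat u^{(k)}\to\hat u$, then for any recovery sequence $v_k\to v$, $\lambda_k = \min_u R_k(u,\hat u^{(k)}) \leq R_k(v_k,\hat u^{(k)})\to R(v,\hat u)$, so $\lambda=\lim_k\lambda_k\leq R(v,\hat u)$ for all $v$, giving $\min_v R(v,\hat u)\ge\lambda$ — wait, this is the reverse; the point is $\lambda_k=\min_u R_k(u,\hat u^{(k)})$ is a true minimum \emph{only when $\hat u^{(k)}$ is the maximizer for $\func_k$}, which $\hat u^{(k)}$ need not be. So here I would instead invoke $\lambda_k\geq\min_u R_k(u,\hat u^{(k)})$ (maximality in the outer slot) and pass to the $\liminf$ using the $\Gamma$-liminf on a minimizing sequence for $R_k(\cdot,\hat u^{(k)})$ — which requires a compactness/coercivity argument to extract a limit, the main technical obstacle, likely handled by \cref{ass:compactness} or by assuming such coercivity as in the homogeneous case. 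I expect this last step — showing the limit pair is outer-maximal, not merely that its Rayleigh quotient is the limit of $\lambda_k$ — together with ruling out the degenerate case $\norm{u_*-\hat u_*}=0$ (which one excludes by noting ground states satisfy $\norm{u_k-\hat u_k}$ bounded below, e.g.\ by normalization or homogeneity, an assumption that should be added), to be the genuinely delicate part; the two Rayleigh-quotient estimates themselves are routine once the convergence $\min\func_k\to\min\func$ is in hand.
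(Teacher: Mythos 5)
Your treatment of the convergence of the Rayleigh quotients and of the inner minimality of $u_*$ is correct and coincides with the paper's argument: you obtain $\lim_{k\to\infty}\func_k(\hat u_k)=\func(\hat u_*)$ by comparing $\func_k(\hat u_k)=\min\func_k$ with a recovery sequence of $\hat u_*$ (the paper phrases this as ``$(\hat u_k)$ is a recovery sequence for $\hat u_*$''), and you then sandwich $R_k(u_k,\hat u_k)$ between $R(u_*,\hat u_*)$ (via the liminf inequality) and $R(w,\hat u_*)$ for arbitrary $w\in\X$ (via $R_k(u_k,\hat u_k)\le R_k(w_k,\hat u_k)$ for a recovery sequence $w_k\to w$). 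Taking $w=u_*$ gives \labelcref{eq:approx_ground_states}, exactly as in the paper.

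The gap is in the outer-maximality step, which you correctly single out as the delicate part but do not close. Your proposed route --- bound $\lambda_k\ge\min_u R_k(u,\hat u^{(k)})$ and pass to the liminf of the inner minima --- requires extracting a convergent minimizing sequence for $R_k(\cdot,\hat u^{(k)})$, i.e.\ a compactness or coercivity hypothesis that \cref{thm:cvgc_ground_states} does not assume (\cref{ass:compactness} is not among its hypotheses). The paper avoids this entirely by comparing at the \emph{fixed} first argument $u_k$: reading the outer $\argmax$ in \cref{def:ground_states} as maximality of $\hat u\mapsto R_k(u_k,\hat u)$ over $\argmin\func_k$, one has $R_k(u_k,\hat u_k)\ge R_k(u_k,\hat w_k)$, where $\hat w_k\in\argmin\func_k$ approximates an arbitrary $\hat w\in\argmin\func$ as furnished by \labelcref{eq:ass_argmins}. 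Since $\hat w_k$ is automatically a recovery sequence for $\hat w$ (compare with any genuine recovery sequence, using that $\hat w_k$ minimizes $\func_k$), passing to the limit yields $R(u_*,\hat u_*)\ge R(u_*,\hat w)\ge\min_u R(u,\hat w)$, which is outer maximality with no compactness needed. You in fact considered and discarded precisely this comparison because you read the definition strictly as a max--min; the paper's proof relies on the pointwise-in-$u_k$ reading. Your remark that the degenerate case $\norm{u_*-\hat u_*}=0$ must be excluded is a fair, if minor, observation that the paper also leaves implicit.
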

\begin{rem}
A few remarks regarding assumption \labelcref{eq:ass_argmins} are in order.
\begin{itemize}
    \item It is satisfied for all the examples above, where $\argmin\func$ coincides with the set of constant functions and $\func_k$ is a discrete $p$-Dirichlet energy on a graph \citep{slepcev2019analysis,roith2020continuum}.
    Its set of minimizers consists of functions which are constant on each connected component of the graph.
    Hence, one even has the inclusion $\argmin\func\subset\argmin\func_k$ which implies $\d(\argmin\func,\argmin\func_k) = 0$.
    \item For general $\Gamma$-converging functionals \labelcref{eq:ass_argmins} is wrong, e.g., for $f_k(x)=|x|^k$ with $\argmin\func_k=\{0\}$ and $f(x)=\chi_{[-1,1]}(x)$ with $\argmin\func=[-1,1]$.
    \item If $\func$ has a unique minimizer, meaning $\argmin\func$ is a singleton, and $\func_k$ is a sequence of equi-coercive functionals (see~\citet{braides2002gamma} for definitions) condition~\labelcref{eq:ass_argmins} is also satisfied.
    In this case any sequence of minimizers $(u_k)$ of $\func_k$ converges to a minimizer of $\func$.
    If this minimizer is uniquely determined it can therefore be approximated with elements $u_k\in\argmin\func_k$.
\end{itemize}
\end{rem}
\begin{proof}
By \labelcref{eq:ass_gamma} it holds $\hat{u}_*\in\argmin\func$. 
Also, $(\hat{u}_k)_{k\in\N}$ is a recovery sequence of~$\hat{u}_*$.
To see this, take another recovery sequence $(\tilde{v}_k)_{k\in\N}$ of $\hat{u}_*$.
Since $\hat{u}_k\in\argmin\func_k$, it holds
\begin{align*}
    \limsup_{k\to\infty}\func_k(\hat{u}_k) \leq \limsup_{k\to\infty}\func_k(\tilde{v}_k) \leq \func(\hat{u}_*),
\end{align*}
which means that $(\hat{u}_k)_{k\in\N}$ is a recovery sequence of $\hat{u}_*$.
Using the liminf inequality we get 
\begin{align*}
    \lim_{k\to\infty}\func_k(\hat{u}_k)=\func(\hat{u}_*).
\end{align*}
Let now $w\in\X$ be arbitrary and $(w_k)_{k\in\N}$ be a recovery sequence for $w$.
Using the assumptions we can compute
\begin{align*}
    \frac{\func(u_*)-\func(\hat{u}_*)}{\norm{u_*-\hat{u}_*}^p} 
    \overset{\labelcref{eq:ass_gamma}}{\leq} &\liminf_{k\to\infty} \frac{\func_k(u_k)-\func_k(\hat{u}_k)}{\norm{u_k-\hat{u}_k}^p} \\
    \overset{\labelcref{eq:ground_states}}{\leq} &\liminf_{k\to\infty} \frac{\func_k(w_k)-\func_k(\hat{u}_k)}{\norm{w_k-\hat{u}_k}^p} 
    \overset{\labelcref{eq:ass_gamma}}{\leq} \frac{\func(w)-\func(\hat{u}_*)}{\norm{w-\hat{u}_*}^p},
\end{align*}
hence, $u_*$ solves the minimization problem in \labelcref{eq:ground_states}.
Choosing $w=u_*$ shows \labelcref{eq:approx_ground_states}.

Similarly, let $\hat{w}\in\argmin\func$ be arbitrary and let $(w_k)_{k\in\N}$ be a recovery sequence for~$\hat{w}$.
By assumption \labelcref{eq:ass_argmins} there exists a sequence $(\hat{w}_k)_{k\in\N}\subset\X$ such that $\hat{w}_k\in\argmin\func_k$ for all $k\in\N$ and $\norm{\hat{w}-\hat{w}_k}\to 0$ as $k\to\infty$.
We claim that $(\hat{w}_k)_{k\in\N}$ is also a recovery sequence for $\hat{w}$.
To see this we first we observe that---since $\hat{w}_k\in\argmin\func_k$---it holds
\begin{align*}
    \limsup_{k\to\infty}\func_k(\hat{w}_k) \leq \limsup_{k\to\infty}\func_k(w_k) \leq \func(\hat{w}). 
\end{align*}
Hence, $(\hat{w}_k)_{k\in\N}$ is also a recovery sequence for $\hat{w}$ and we can compute
\begin{align*}
    \frac{\func(u_*)-\func(\hat{u}_*)}{\norm{u_*-\hat{u}_*}^p} 
    \overset{\labelcref{eq:approx_ground_states}}{=} &\lim_{k\to\infty} \frac{\func_k(u_k)-\func_k(\hat{u}_k)}{\norm{u_k-\hat{u}_k}^p} \\
    \overset{\labelcref{eq:ground_states}}{\geq} &\limsup_{k\to\infty} \frac{\func_k(u_k)-\func_k(\hat{w}_k)}{\norm{u_k-\hat{w}_k}^p} 
    \overset{\labelcref{eq:ass_gamma}}{\geq} \frac{\func(u_*)-\func(\hat{w})}{\norm{u_*-\hat{w}}^p},
\end{align*}
hence $\hat{u}_*$ solves the maximization problem in \labelcref{eq:ground_states} and $(u_*,\hat{u}_*)$ is a $p$-ground state.
\end{proof}

\begin{rem}[Compactness and Normalization]
To be able to apply \cref{thm:cvgc_ground_states} one needs some additional compactness and suitable normalization which ensure that ground states of $\func_k$ converge.
For instance, if $\func_k$ and $\func$ are absolutely $p$-homogeneous, arbitrary multiples of $p$-ground states are again $p$-ground states, which means that a-priori a sequence of ground states does not need to converge.
However, in this case one can restrict oneself to normalized ground states solving
\begin{align*}
    u_*\in\argmin\left\lbrace \func(u) \st u\in\X,\,\norm{u}_\sim=1\right\rbrace,
\end{align*}
where $\norm{\cdot}_\sim$ is a quotient norm, defined as $\norm{u}_\sim := \inf_{\hat{u}\in\calN(\func)}\norm{u-\hat{u}}$.
Using the limsup-inequality one can then show as \citet{roith2020continuum} that $\limsup_{k\to\infty}\func_k(u_k) \leq \func(u_*)$, where $u_k$ and $u_*$ denote $p$-ground states of $\func_k$ and $\func$, respectively.
Hence, in this case a compactness assumption of the kind
\begin{align*}
    \limsup_{k\in\N} \left\lbrace\norm{u_k} + \func_k(u_k)\right\rbrace < \infty \implies (u_k)_{k\in\N} \text{ is relatively compact}
\end{align*}
implies the convergence of normalized $p$-ground states using \cref{thm:cvgc_ground_states}.
\end{rem}
Besides the applications on grids and weighted graphs, where the conditions of \cref{thm:cvgc_ground_states} have been verified in the papers referenced at the beginning of this section, one can also apply the theorem for Galerkin approximations using Finite Elements.
\begin{example}[Galerkin discretizations]
Let $\func:\X\to(-\infty,\infty]$ be a continuous functional.
Assume that $(\X_k)_{k\in\N}\subset\X$ is a sequence of embedded approximation spaces such that for all $u\in\X$ there exists a sequence $(u_k)_{k\in\N}\subset\X$ such that $u_k\in\X_k$ for all $k\in\N$ and $u_k\to u$ as $k\to\infty$, in other words it holds $\d(\X,\X_k)\to 0$ as $k\to\infty$.
One can define the functionals
\begin{align}\label{eq:galerkin_general}
    \func_k:\X\to(-\infty,\infty],\quad u\mapsto
    \begin{cases}
    \func(u) & \text{if } u\in\X_k,\\
    \infty & \text{else},
    \end{cases}
\end{align}
which clearly $\Gamma$-converge to $\func$ and hence satisfy \labelcref{eq:ass_gamma}.

The prototypical example for this is a Finite Element Galerkin approximation of the base space $\X=W^{1,p}_0(\Omega)$ with approximating spaces $\X_k$ containing piecewise polynomials.
Interesting functionals in this case are integral functional of the form
\begin{align}\label{eq:galerkin_func}
    \func(u) = \int_\Omega \Phi\left(x,u(x),\nabla u(x)\right) \d x,\quad u \in W^{1,p}_0(\Omega).
\end{align}
If the spaces $\X_k$ contain piecewise polynomials of high degree or if $\Phi$ is strongly non-linear, the functional $\func(u)$ for $u\in\X_k$ cannot be evaluated accurately.
In this case, one can also study $\Gamma$-convergence of quadrature approximations of \labelcref{eq:galerkin_func} as done by \citet{ortner2004gamma}.
\end{example}
\section{Applications}
\label{sec:applications}

In this section we present numerical results of the proximal power method \labelcref{eq:power_it_prox} applied to ground state problems on grids and weighted graphs. 
A weighted graph is a tuple $G=(\widehat{\Omega},\omega)$, where $\widehat{\Omega}$ is a finite set of vertices and $\omega:\widehat{\Omega}\times \widehat{\Omega}\to[0,\infty)$ is a function which assigns edge weights to pairs of vertices.
Weighted graphs are very handy objects for approximating variational problems since they allow for natural definitions of differential operators \citep{elmoataz2015p} and facilitate discrete to continuum analysis using $\Gamma$-convergence or PDE techniques (see, e.g., \citet{trillos2015continuum,calder2018game,slepcev2019analysis,bozorgnia2020infinity,roith2020continuum}). 
We define the set of vertex and edge functions on a graph $G$ as
\begin{align}
    \H(\widehat{\Omega}) = \{u:\widehat{\Omega}\to\R\},\qquad
    \H(\widehat{\Omega}\times \widehat{\Omega}) = \{h:\widehat{\Omega}\times \widehat{\Omega}\to\R\},
\end{align}
together with $p$-norms
\begin{alignat}{3}
    \norm{u}_p^p &:= 
    \sum_{x\in \widehat{\Omega}} |u(x)|^p, \quad
    &&\norm{u}_\infty :=\max_{x\in \widehat{\Omega}} |u(x)|,\quad &&u\in\H(\widehat{\Omega}),\\
    \norm{h}_p^p &:= 
    \sum_{
    x,y\in \widehat{\Omega}}|h(x,y)|^p, \quad
    &&\norm{h}_\infty := \max_{x,y\in \widehat{\Omega}}|h(x,y)|,\quad &&h\in\H(\widehat{\Omega}\times\widehat{\Omega}).
\end{alignat}
Furthermore, one can define a gradient operator $\nabla_G : \H(\widehat{\Omega}) \to \H(\widehat{\Omega}\times \widehat{\Omega})$ and a divergence operator $\div_G : \H(\widehat{\Omega}\times \widehat{\Omega}) \to \H(\widehat{\Omega})$ as
\begin{align}
    \nabla_G u(x,y) &:= \sqrt{\omega(x,y)}(u(y)-u(x)), \\
    \div_G h(x) &:= \sum_{y\in \widehat{\Omega}}\sqrt{\omega(x,y)}(h(y,x)-h(x,y)).
\end{align}
For illustrating the convergence of the proximal power method for the special case of absolutely one-homogeneous functionals $\func$ on a Hilbert space, we define an eigenvector affinity as
\begin{align}\label{eq:affinity}
    \mathrm{aff}_k:=\frac{\norm{\sg^k}^2}{\func(\sg^{k})},\qquad\sg^k:=\frac{u^{k}-u^{k+1/2}}{\sigma^k}.
\end{align}
Note that by the optimality conditions of the 2-proximal operator on a Hilbert space one can easily show that $\mathrm{aff}_k\in[0,1]$ and $\mathrm{aff}_k=1$ if and only if $\sg^k$ is an eigenvector of the subdifferential $\partial\func$ (and hence also of the proximal operator).
\revise{We accurately evaluate all proximal operators using a primal-dual algorithm \citep{chambolle2011first}.}

\subsection{Calibrable Sets}

In this first example the considered graph $G=(\widehat{\Omega},\omega)$ is just a standard rectangular grid, where the differential operators defined above simply coincide with finite difference approximations.
To be precise, $\widehat{\Omega}=\{(ih,ih)\st i=0,\dots,N\}$ where $N\in\N$ and $h=1/N$. 
We consider the discrete total variation functional with central differences
\begin{align}
    \func_G(u) := \frac{1}{2h}\sum_{x\in\widehat{\Omega}} \sqrt{\left[u(x+he_1)-u(x-he_1)\right]^2+\left[u(x+he_2)-u(x-he_2)\right]^2},
\end{align}
where $e_i$ is the $i$-th unit vector (see \citet{chambolle2020approximating} for a plethora of other, in particular, better discretizations). 
We compute the $p$-proximal power iteration~\labelcref{eq:power_it_prox} of $\func_G$ for $p\in\{1,2\}$, using the norms $\norm{\cdot}_1$ and $\norm{\cdot}_2$, respectively.
The first five iterates of the method are depicted in \cref{fig:calibrable}. 
As predicted by \cref{thm:cvgc_power_method}, the power methods converge to eigenvectors of the proximal operators. 
However, only in the case $p=2$, where we used the Hilbert norm for the proximal operator, the limit is a calibrable set \citep{alter2005characterization}, which is an eigenvector of the subdifferential operator in the sense of \labelcref{eq:gen_ev_prob}.
Hence, \cref{thm:cvgc_to_subdiff_ev}, which states that only for $p>1$ proximal eigenvectors are also subdifferential eigenvectors, is sharp, in general.

\begin{figure}
\setlength{\tabcolsep}{1.5pt}
\gdef\Width{2.6cm}
\centering
\begin{tabular}{cccccc}
\rotatebox{90}{\hspace{.7cm}$p=1$} 
&\includegraphics[width=\Width,trim=1cm 1cm 1cm 1cm,clip]{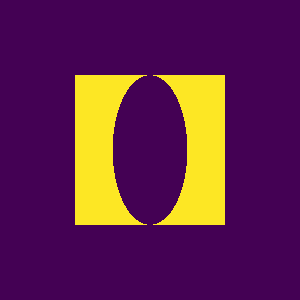} 
&\includegraphics[width=\Width,trim=1cm 1cm 1cm 1cm,clip]{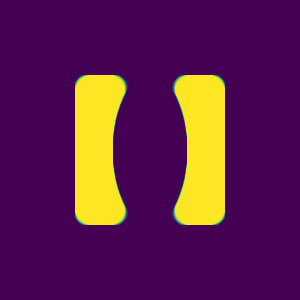} 
&\includegraphics[width=\Width,trim=1cm 1cm 1cm 1cm,clip]{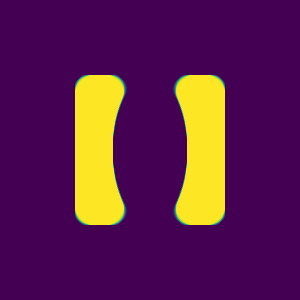} 
&\includegraphics[width=\Width,trim=1cm 1cm 1cm 1cm,clip]{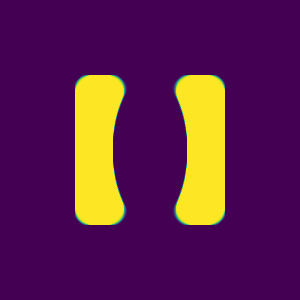} 
&\includegraphics[width=\Width,trim=1cm 1cm 1cm 1cm,clip]{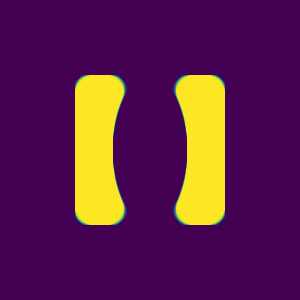} 
\\
& $k=0$ & $k=1$ & $k=2$ & $k=3$ & $k=4$ \\
\rotatebox{90}{\hspace{.7cm}$p=2$} 
&\includegraphics[width=\Width,trim=1cm 1cm 1cm 1cm,clip]{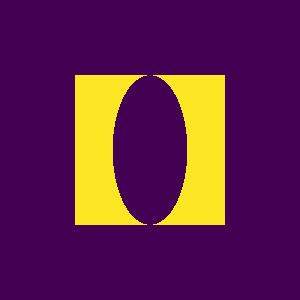} 
&\includegraphics[width=\Width,trim=1cm 1cm 1cm 1cm,clip]{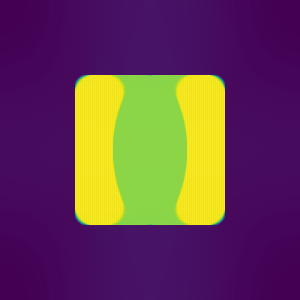} 
&\includegraphics[width=\Width,trim=1cm 1cm 1cm 1cm,clip]{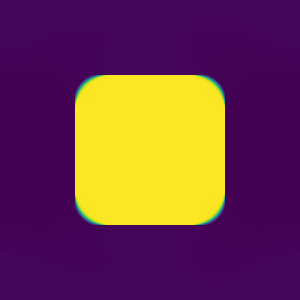} 
&\includegraphics[width=\Width,trim=1cm 1cm 1cm 1cm,clip]{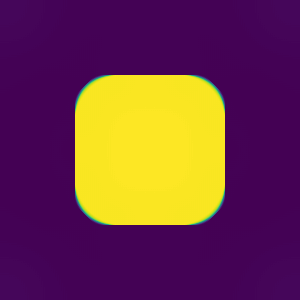} 
&\includegraphics[width=\Width,trim=1cm 1cm 1cm 1cm,clip]{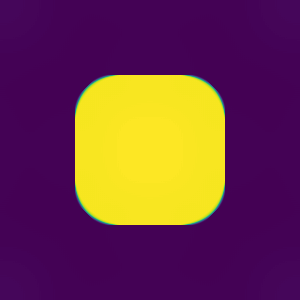} 
\end{tabular}
\caption{\textbf{Top row:} Five iterates for $p=1$. Limit is no calibrable set. \textbf{Bottom row:} Five iterates for $p=2$. Limit is a calibrable set.}
\label{fig:calibrable}
\end{figure}


\subsection{Graph Clustering}

Here, we consider graph clustering using ground states of the graph $p$-Dirichlet energy
\begin{align}\label{eq:graph_diri_en}
    \func_{G,p}(u):=\norm{\nabla_G u}_p^p = \sum_{x,y\in \widehat{\Omega}}\omega(x,y)^\frac{p}{2}|u(y)-u(x)|^p,
\end{align}
where the case $p=1$ is the graph total variation.
If $G$ is connected, the nullspace of these functionals coincides with the constant graph functions.
Ground states are solutions to
\begin{align}
    \min_{u\in\calN(\func_{G,p})^\perp} \frac{\func_{G,p}(u)}{\norm{u}_2},
\end{align}
and correspond to eigenfunctions of the graph $p$-Laplacian operator
\begin{align}
    \Delta_{G,p}u(x):=\sum_{y\in \widehat{\Omega}}\omega(x,y)^\frac{p}{2}|u(y)-u(x)|^{p-2}(u(y)-u(x)).
\end{align}
On the left side of \cref{fig:graph_clustering} we show a graph representing the so-called ``two moons'' data set together with its ground states for $p=2$ and $p=1$.
As observed by \citet{buhler2009spectral,aujol2018theoretical,bungert2019computing} the ground state for $p=1$ yield a sharp clustering of the graph into two sub-graphs whereas the standard graph Laplacian eigenfunction for $p=2$ yields a very diffuse interface.

On the right side of \cref{fig:graph_clustering} we plot the convergence of the proximal power method \labelcref{eq:power_it_prox} for constant (solid lines) and variable parameter rule (dashed lines), cf. \cref{def:parameter_rule}.
The blue lines represent the quantity $1-\cos\theta$ where $\theta$ is the angle between $u^{k+1/2}$ and $u^k$, see \labelcref{eq:1-angle}.
\cref{prop:angle} states that this quantity converges to zero which is reflected in our numerical experiments.
The red lines depict the eigenvector affinity \labelcref{eq:affinity} which converges to one, as desired. 
Notably, here the variable parameter rule leads to a quicker convergence than the constant one.
\cref{fig:graph_clustering} makes clear that after only six iterations the proximal power method has converged to the 1-Laplacian ground state, depicted on the bottom left.

\begin{figure}[htb]
    \begin{minipage}{0.46\textwidth}
    \newlength{\PicWidth}
    \setlength{\PicWidth}{\textwidth}%
    {\includegraphics[angle=0,width=\dimexpr0.5\PicWidth\relax,trim=3cm 4cm 2.5cm 3.5cm,clip]{numerics/1-laplacian efs/two_moon.pdf}}%
    {\includegraphics[angle=0,width=\dimexpr0.5\PicWidth\relax,trim=3cm 4cm 2.5cm 3.5cm,clip]{numerics/1-laplacian efs/laplacian.pdf}}\\%
    \centering
    {\includegraphics[angle=0,width=\dimexpr0.5\PicWidth\relax,trim=3cm 4cm 2.5cm 3.5cm,clip]{numerics/1-laplacian efs/1-laplacian.pdf}}%
    \end{minipage}%
    \hfill%
    \begin{minipage}{0.43\textwidth}
    \includegraphics[width=\textwidth,trim=1.5cm 0.8cm 0.6cm 1.5cm,clip]{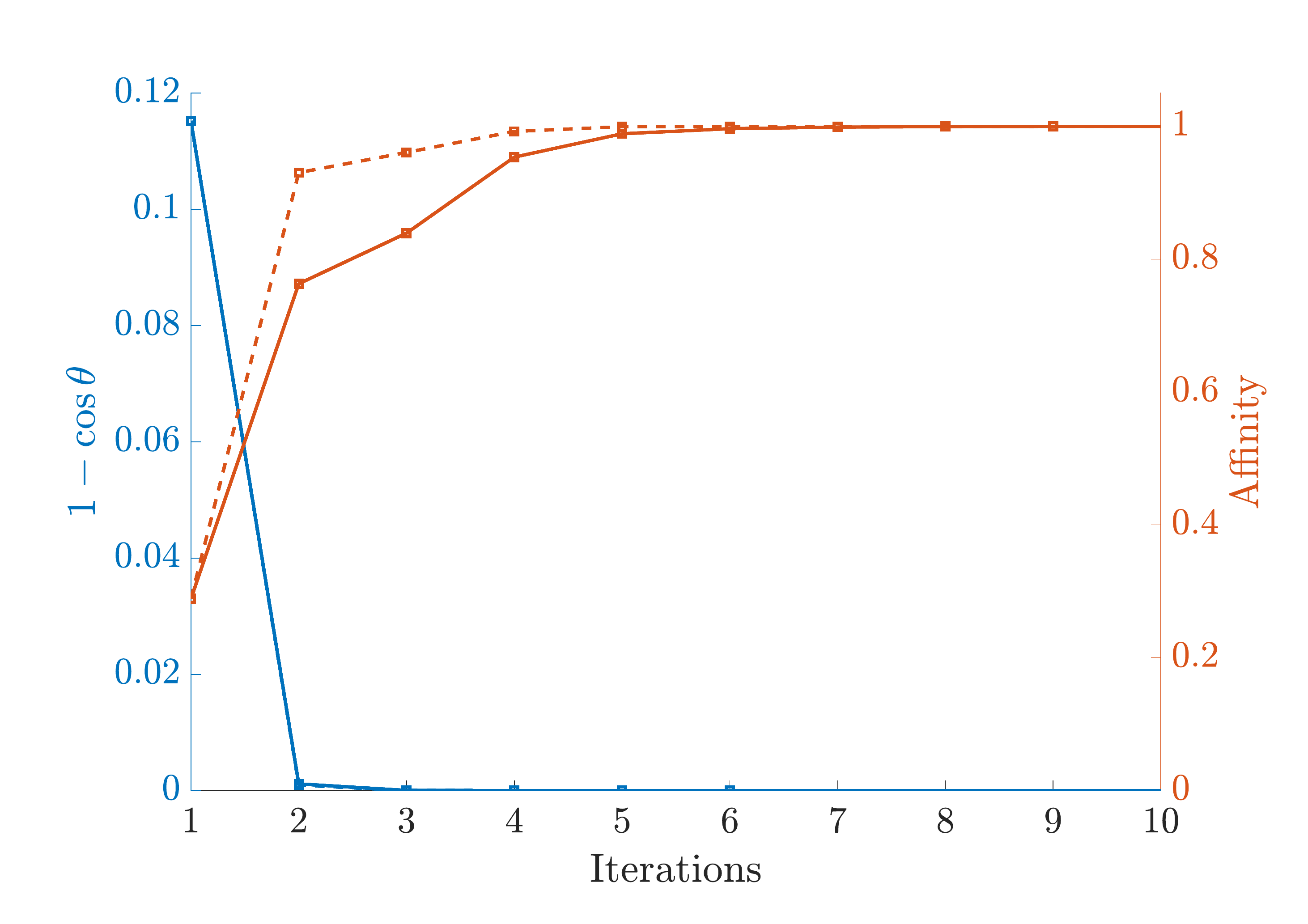}%
    \end{minipage}
    \caption{Spectral graph clustering. \textbf{Left:} Two-moon graph, Laplacian eigenfunction, $1$-Laplacian eigenfunction. \textbf{Right:} Angle (blue) and the eigenvector affinity (red) plotted over the iterations. Solid lines correspond to constant parameter rule, dashed lines to variable.}
    \label{fig:graph_clustering}
\end{figure}

\subsection{Geodesic Distance Functions on Graphs}

In this example we present an application to distance functions on graphs. 
To this end, we choose a subset $\widehat{\mathcal{O}}\subset\widehat{\Omega}$ of the vertex set which play the role of a constraint set.
We define the functional $\func_{G,\infty}:\H(\widehat{\Omega})\to\R\cup\{\infty\}$ as
\begin{align}
    \func_{G,\infty}(u) := \norm{\nabla_G u}_\infty=\max_{x,y\in\widehat{\Omega}}\sqrt{\omega(x,y)}|u(y)-u(x)|
\end{align}
if $u\in\H(\widehat{\Omega})$ satisfies $u=0$ on $\widehat{\mathcal{O}}$ and $\func_{G,\infty}(u)=\infty$ else.
The value $\func_{G,\infty}(u)$ can be interpreted as largest local Lipschitz constant of $u$ which satisfies the constraints on $\widehat{\mathcal{O}}$.
Since the nullspace of $\func_{G,\infty}$ is trivial when the graph is connected, ground states solve
\begin{align}
    \min_{u\in\H(\widehat{\Omega})}\frac{\func_{G,\infty}(u)}{\norm{u}_2}.
\end{align}
\citet{bungert2020structural} characterized solutions of this problem as multiples of the geodesic graph distance function to the set $\widehat{\mathcal{O}}$.
Furthermore, \citet{roith2020continuum} proved that, if the graph vertices $\widehat{\Omega}$ and $\widehat{\mathcal{O}}$ converge to continuum domain $\Omega$ and a closed constraint set $\mathcal{O}\subset\overline{\Omega}$ in the Hausdorff distance and the weights $\omega$ are scaled appropriately, the $\Gamma$-limit of the functionals $\func_{G,\infty}$ is given by
\begin{align}
    \func_\infty(u) = 
    \begin{cases}
    \norm{\nabla u}_{L^\infty(\Omega)},&$if $ u\in W^{1,\infty}(\Omega),\; u=0 $ on $\mathcal{O},\\
    \infty, &$else.$
    \end{cases}
\end{align}
It was proved that the corresponding ground states converge (cf.~\cref{thm:cvgc_ground_states}), as well.
The limiting ground states of $\func_{\infty}$ where also characterized as geodesic distance functions to $\mathcal{O}$ by \citet{bungert2020structural,roith2020continuum}.

\cref{fig:germany,fig:beans} show ground states of $\func_{G,\infty}$ on two types of weighted graphs.
The first one in \cref{fig:germany} is a grid graph representing a map of Germany.
The constraint set $\widehat{\mathcal{O}}$ is chosen as the boundary and hence the computed ground state coincides with the distance function to the border of Germany. 
Similarly, in \cref{fig:beans} we show the computed ground states for different resolutions of a graph which consists of random samples from a two-dimensional dumbbell-shaped manifold, embedded in $\R^3$. 
The constraint set $\widehat{\mathcal{O}}$ is chosen to be a fixed vertex in the top left of the manifold and hence the computed ground states coincide with the geodesic distance to this point.

\begin{wrapfigure}{r}{0.61\textwidth}
    \centering
    \includegraphics[height=0.22\textwidth,trim=6cm 8.7cm 5.5cm 8.3cm,clip]{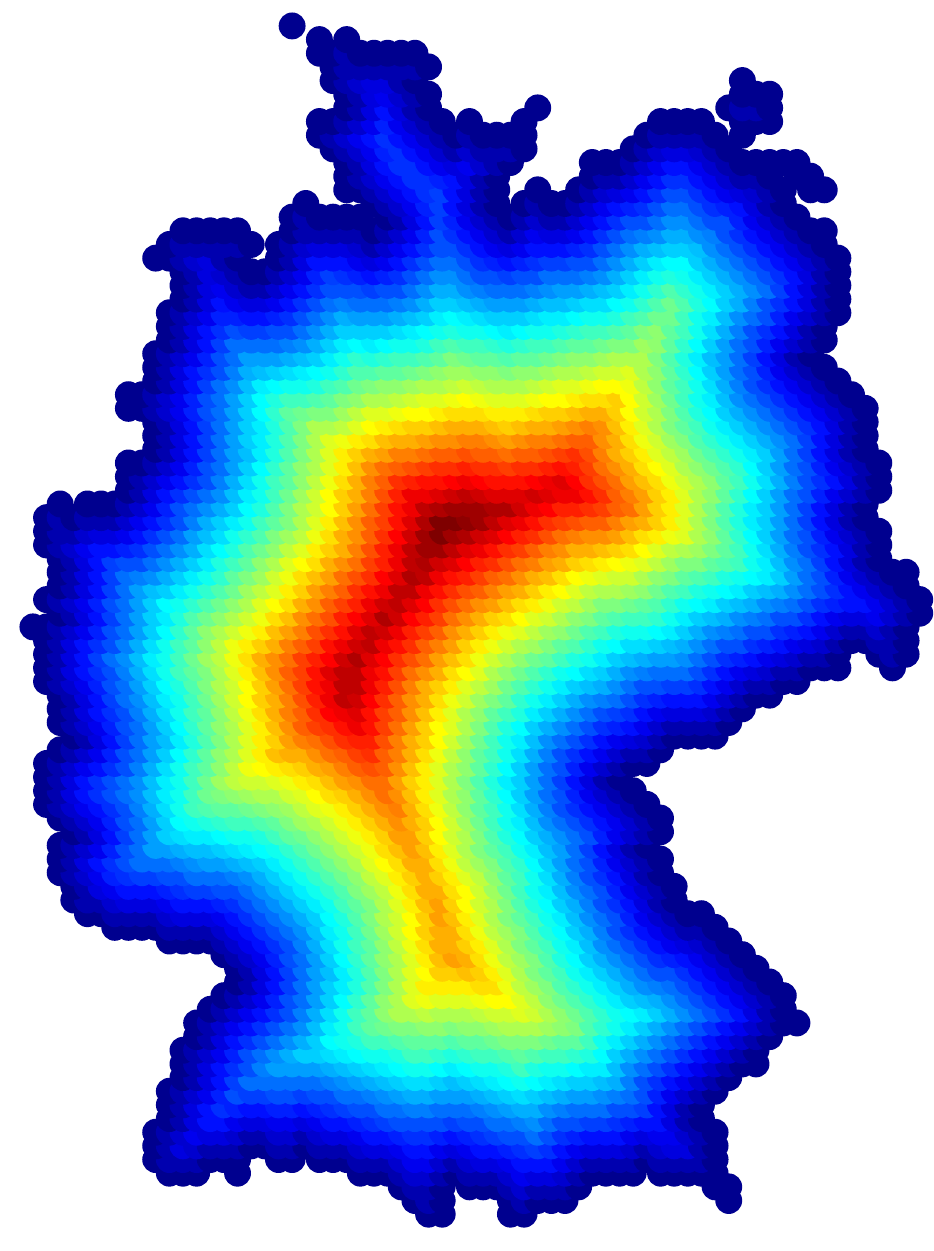}%
    \hfill%
    \includegraphics[width=0.43\textwidth,trim=1.5cm 0.8cm 0.7cm 1.5cm,clip]{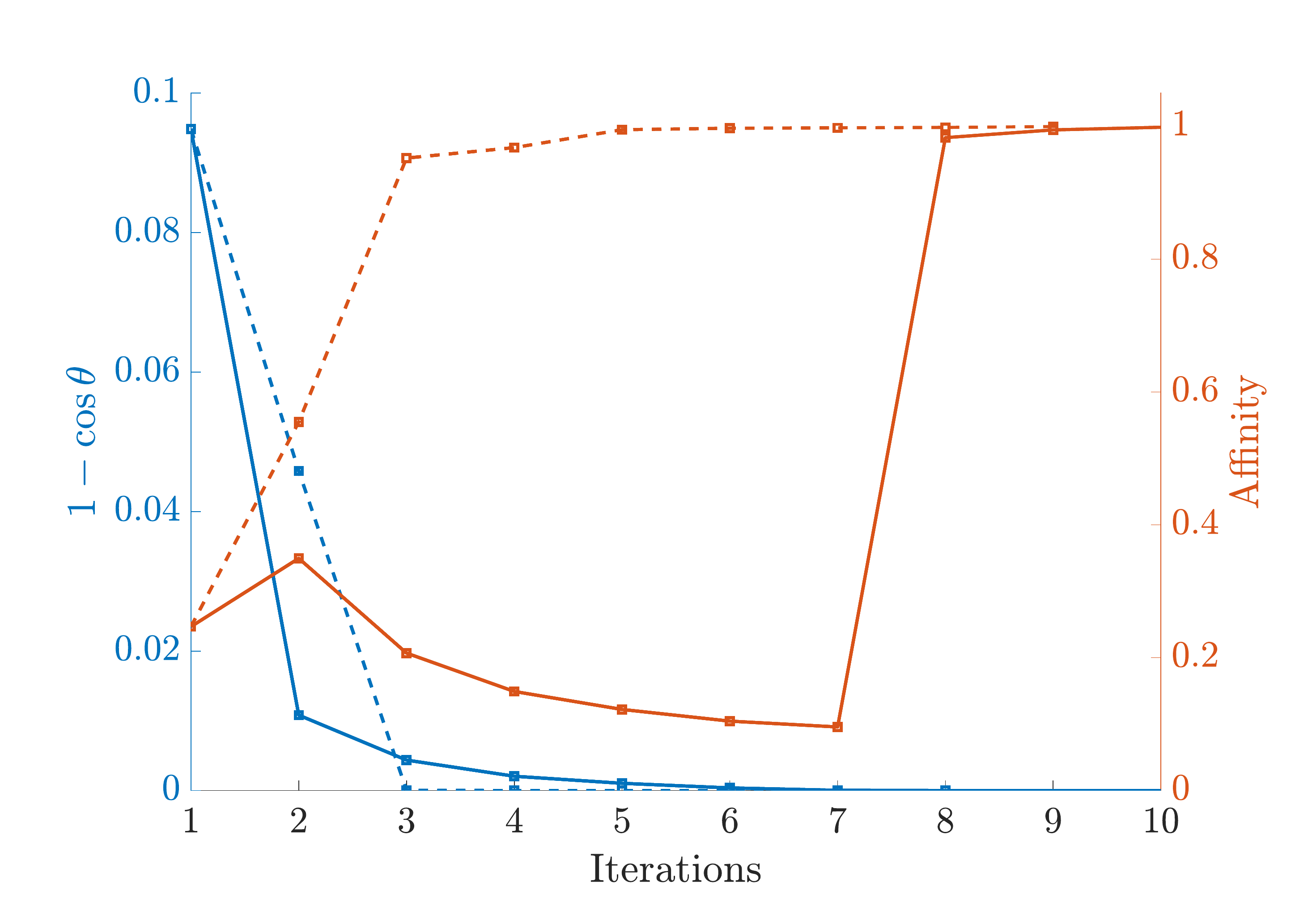}
    \caption{\textbf{Left:} Distance function of the German border. \textbf{Right:} Angle (blue) and the eigenvector affinity (red) plotted over the iterations. Solid lines correspond to constant parameter rule, dashed lines to variable.}
    \label{fig:germany}
\end{wrapfigure}
On the right side of \cref{fig:germany} we again plotted the quantitative convergence behavior of the proximal power method \labelcref{eq:power_it_prox} in terms of convergence of the angle \labelcref{eq:1-angle} and the eigenvector affinity \labelcref{eq:affinity}.
Here we make similar observations as in \cref{fig:graph_clustering}, in particular the variable parameter rule leads to a quicker convergence than the constant one and both the angle and the eigenvector affinity converge after approximately six iterations of the method.
Interestingly, for the constant parameter rule the eigenvector affinity first decreases before it finally reaches a value close to one, whereas the angle decreases monotonously. 
This is no numerical artefact but is due to the fact that the affinity \labelcref{eq:affinity} is a much stronger convergence criterion than the angle since the former depends on the functional at hand (here $\func_{G,\infty}$) whereas the latter does not.

\begin{figure}[b!]
    \def\PicWidth{0.35\textwidth}
    \def\angle{45}
    \centering
    {\includegraphics[angle=\angle,width=\PicWidth,trim=4cm 5cm 3cm 4.5cm,clip]{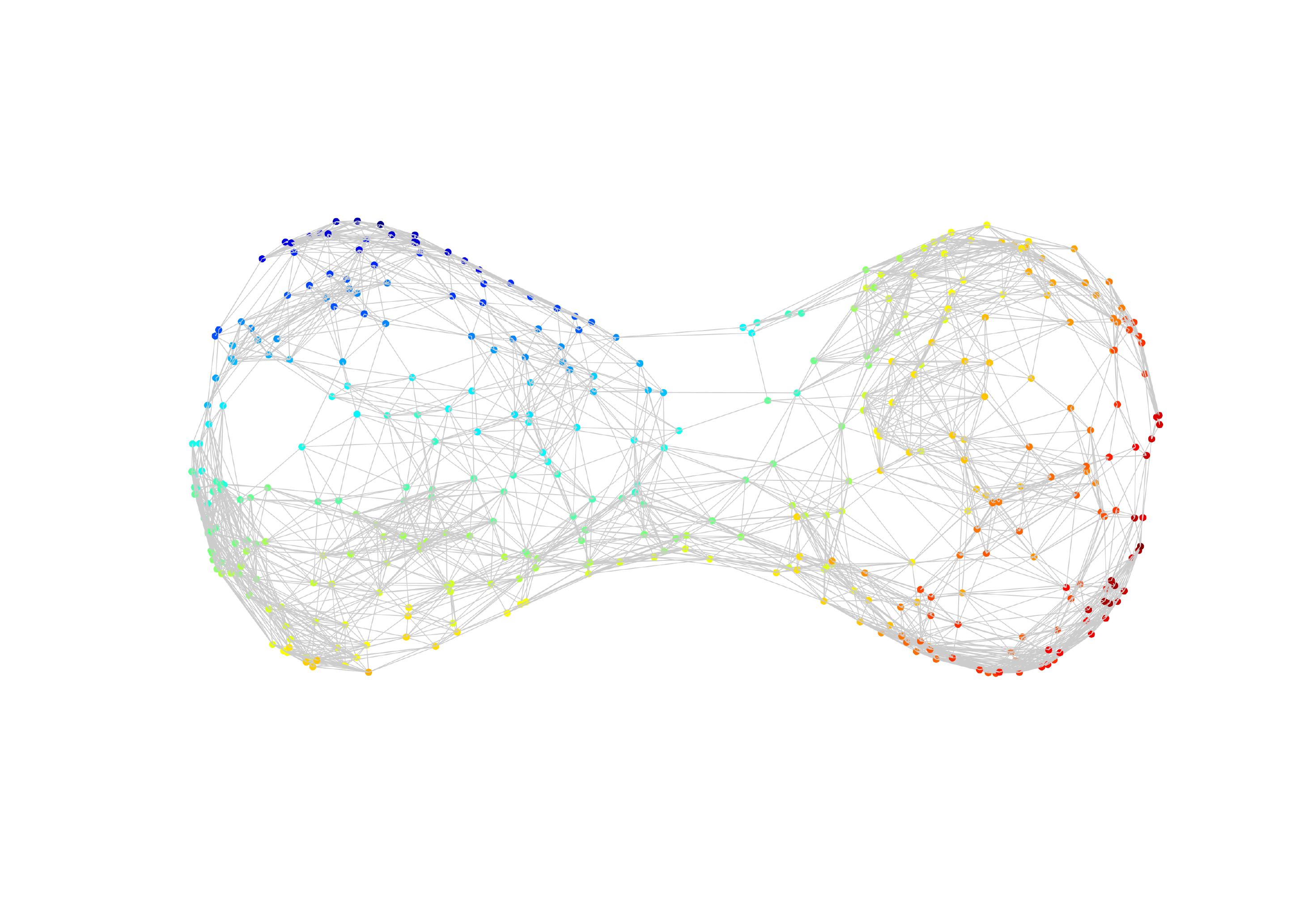}}%
    \hspace{-2cm}%
    {\includegraphics[angle=\angle,width=\PicWidth,trim=4cm 5cm 3cm 4.5cm,clip]{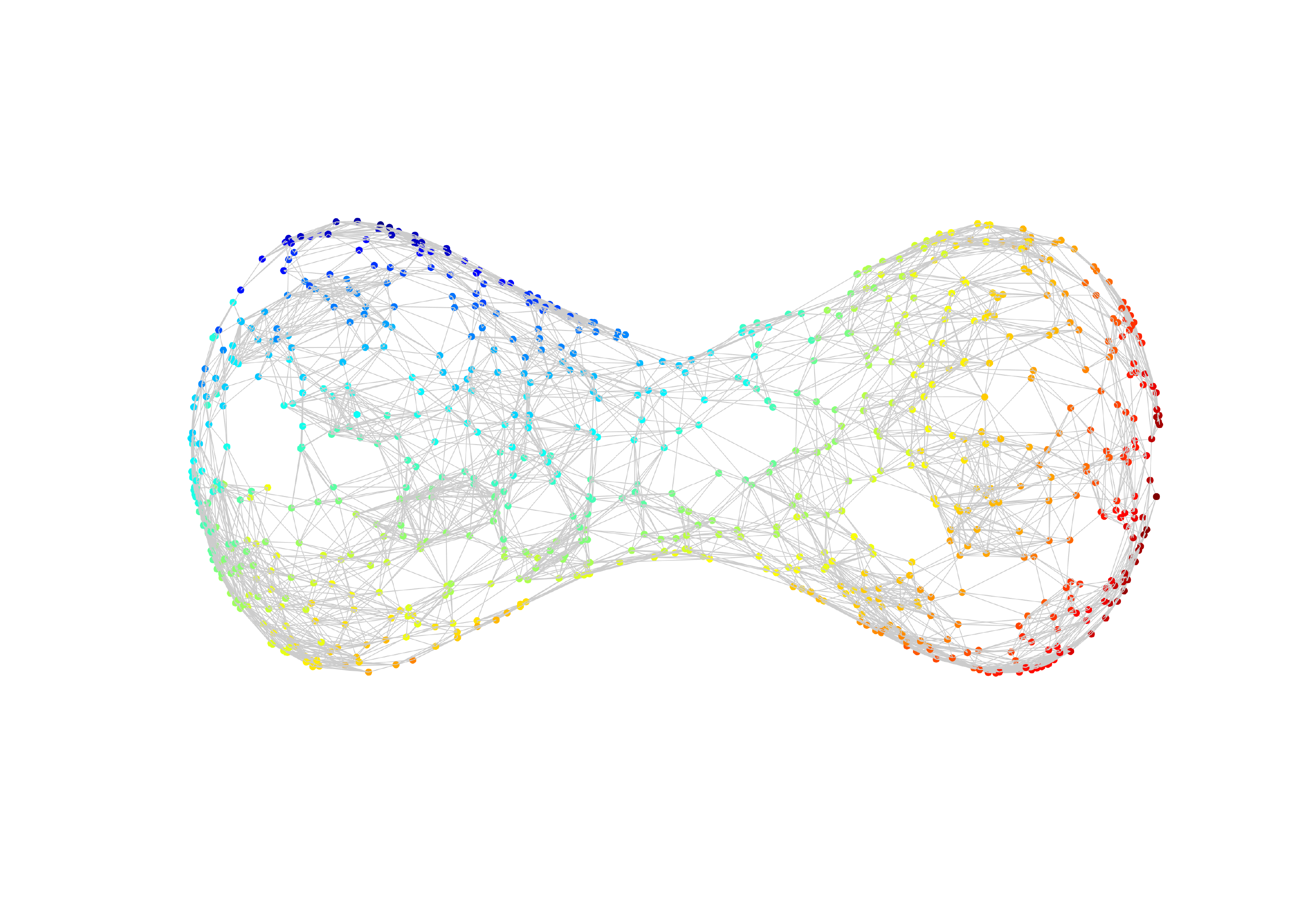}}%
    \hspace{-2cm}%
    {\includegraphics[angle=\angle,width=\PicWidth,trim=4cm 5cm 3cm 4.5cm,clip]{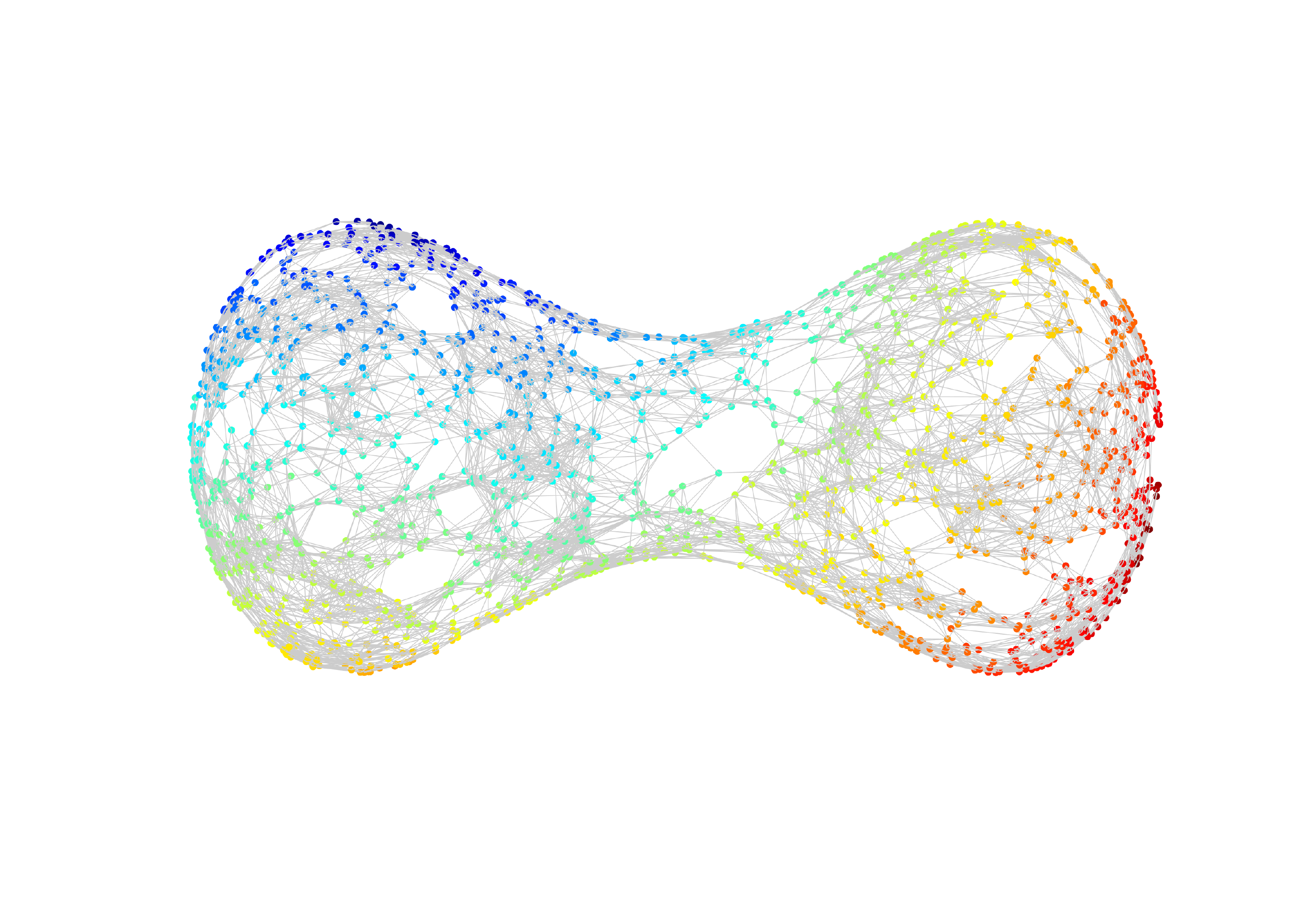}}%
    \hspace{-2cm}%
    {\includegraphics[angle=\angle,width=\PicWidth,trim=4cm 5cm 3cm 4.5cm,clip]{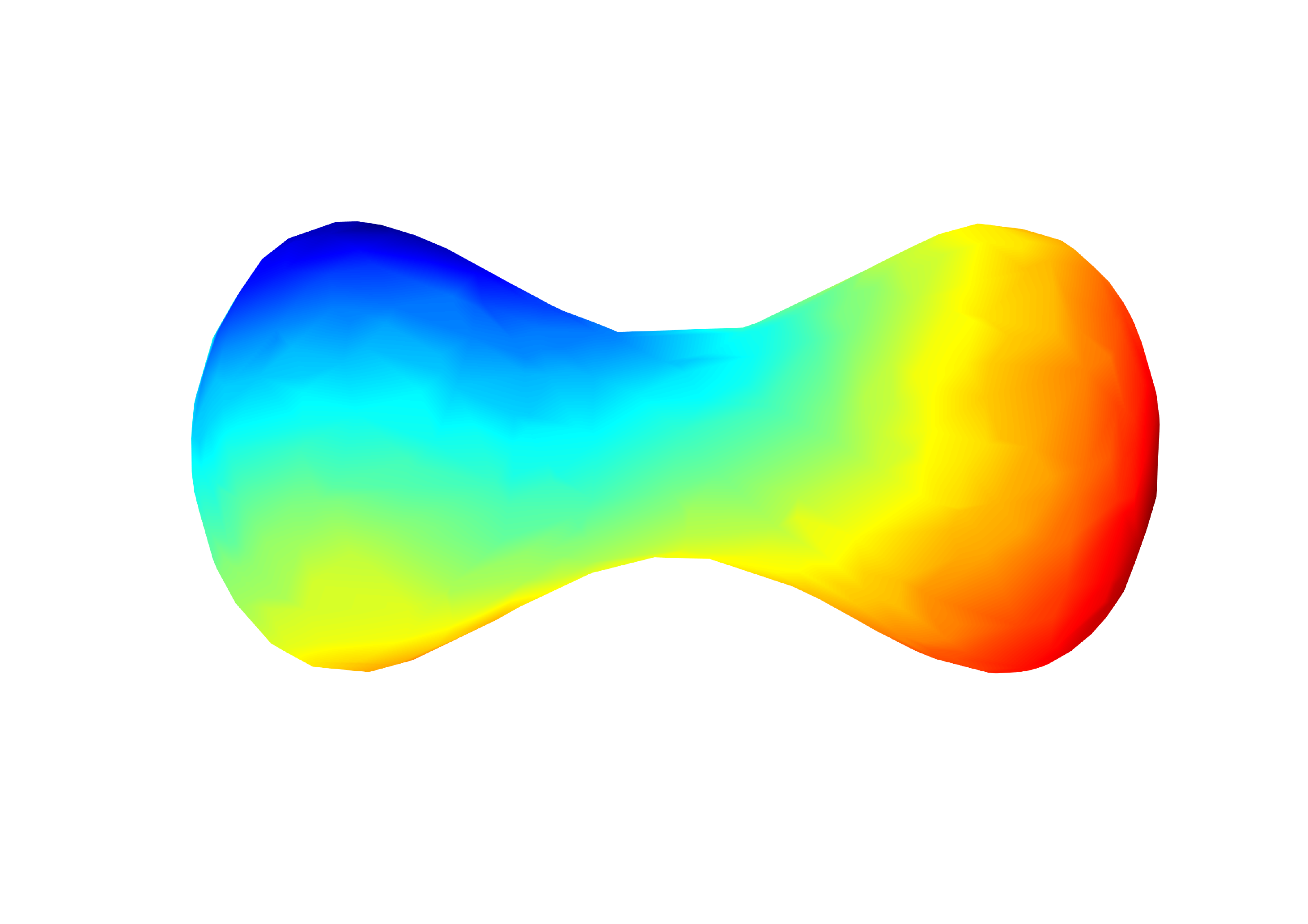}}%
    \caption{Geodesic distance functions to a point at the left of a discretized manifold with increasing resolution. Last image is a rendered version of the penultimate one.}
    \label{fig:beans}
\end{figure}

\clearpage
\printbibliography

\begin{appendices}

The statements regarding exact reconstruction and finite extinction proved below are generalizations of \citet{bungert2019solution}.

\section{Exact Reconstruction Time}

\begin{prop}\label{prop:exact_recon}
It holds that 
\begin{align*}
    f \in \argmin_{u\in\X} \norm{u-f} + \sigma \func(u)
\end{align*}
if and only if $\sigma\leq \sigma_*(f)$, where 
\begin{align}\label{eq:alpha_*}
    \sigma_{*}(f):= \frac{1}{\inf\left\lbrace \norm{\sg}_* \st \sg \in \partial\func(f) \right\rbrace}.
\end{align}
Furthermore, it holds
\begin{align} \label{ineq:upper_bound_alpha*}
   \sigma_*(f) \leq \inf_{\hat{u} \in \argmin\func} \frac{\norm{f-\hat{u}}}{\func(f)-\func(\hat{u})}.
\end{align}
\end{prop}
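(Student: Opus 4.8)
The plan is to derive the equivalence from Fermat's rule combined with the Moreau--Rockafellar sum rule, and then to obtain \labelcref{ineq:upper_bound_alpha*} directly from the subgradient inequality for $\func$ at $f$. Write $g_\sigma(u):=\norm{u-f}+\sigma\func(u)$, let $B_{\X^*}:=\{\eta\in\X^*\st\norm{\eta}_*\leq1\}$ be the closed dual unit ball, and set $m:=\inf\{\norm{\sg}_*\st\sg\in\partial\func(f)\}$, so that $\sigma_*(f)=1/m$. Since $u\mapsto\norm{u-f}$ is finite and continuous on all of $\X$, the sum rule applies and gives $\partial g_\sigma(f)=\partial\norm{\cdot-f}(f)+\sigma\,\partial\func(f)$, and a short computation (translation invariance, then the defining inequality of the subdifferential) shows $\partial\norm{\cdot-f}(f)=\partial\norm{\cdot}(0)=\Phi^1_\X(0)=B_{\X^*}$ (cf.\ \cref{prop:duality_proximal}). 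Hence, by Fermat's rule, $f\in\argmin g_\sigma$ if and only if $0\in B_{\X^*}+\sigma\,\partial\func(f)$, i.e.\ if and only if there exists $\sg\in\partial\func(f)$ with $\sigma\norm{\sg}_*\leq1$.

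The remaining step is to identify this condition with $\sigma\leq\sigma_*(f)$. The implication ``$\Rightarrow$'' is immediate, since then $m\leq\norm{\sg}_*\leq1/\sigma$. For ``$\Leftarrow$'' I would show that the infimum $m$ is \emph{attained} whenever $\partial\func(f)\neq\emptyset$: the set $\partial\func(f)$ is weak-$*$ closed (an intersection of weak-$*$ closed half-spaces), the dual norm $\norm{\cdot}_*$ is weak-$*$ lower semicontinuous, and a minimising sequence eventually lies in a weak-$*$ compact ball by Banach--Alaoglu, hence has a weak-$*$ cluster point in $\partial\func(f)$ realising $m$; this minimiser $\sg$ satisfies $\sigma\norm{\sg}_*=\sigma m\leq1$ precisely when $\sigma\leq1/m=\sigma_*(f)$. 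The degenerate configurations are consistent with the statement: if $\partial\func(f)=\emptyset$ then $\func(f)=\infty$, $\sigma_*(f)=0$, and $f$ cannot minimise $g_\sigma$ for $\sigma>0$; if $0\in\partial\func(f)$ then $f\in\argmin\func$, $\sigma_*(f)=\infty$, and $f$ minimises every $g_\sigma$.

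For the bound \labelcref{ineq:upper_bound_alpha*}, fix $\hat u\in\argmin\func$ and assume $\func(\hat u)<\func(f)<\infty$, the other cases being covered by the degenerate configurations above (with the convention that a positive number over $0$ is $+\infty$). Since $\sigma_*(f)=1/m$, the asserted inequality $\sigma_*(f)\leq\norm{f-\hat u}/(\func(f)-\func(\hat u))$ is equivalent to $\func(f)-\func(\hat u)\leq m\,\norm{f-\hat u}$, so it suffices to prove $\func(f)-\func(\hat u)\leq\norm{\sg}_*\norm{f-\hat u}$ for every $\sg\in\partial\func(f)$. This is exactly the subgradient inequality $\func(\hat u)\geq\func(f)+\langle\sg,\hat u-f\rangle$ rearranged and combined with $\langle\sg,f-\hat u\rangle\leq\norm{\sg}_*\norm{f-\hat u}$; taking the infimum over $\sg\in\partial\func(f)$ and then over $\hat u\in\argmin\func$ concludes the proof.

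I expect no deep difficulty here. The only point requiring genuine care is the attainment of the infimum $m$ at the endpoint $\sigma=\sigma_*(f)$ — the place where weak-$*$ compactness (Banach--Alaoglu) and the weak-$*$ closedness of $\partial\func(f)$ enter — together with the routine but slightly tedious bookkeeping of the degenerate cases for $\partial\func(f)$.
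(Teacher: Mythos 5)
Your proof is correct and follows essentially the same route as the paper's: the paper's optimality condition \labelcref{eq:OC} is precisely your Fermat-plus-Moreau--Rockafellar characterization, the attainment of the infimum defining $\sigma_*(f)$ at the endpoint is handled identically via weak-$*$ closedness of $\partial\func(f)$ and weak-$*$ lower semicontinuity of $\norm{\cdot}_*$, and the bound \labelcref{ineq:upper_bound_alpha*} is the same subgradient-inequality computation. One minor slip in your bookkeeping: $\partial\func(f)=\emptyset$ does not imply $\func(f)=\infty$ for a proper convex lower semicontinuous functional (the subdifferential can be empty at boundary points of the domain), but your conclusion in that degenerate case still follows directly from the characterization $0\in B_{\X^*}+\sigma\,\partial\func(f)$.
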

\begin{proof}
For the first impliction we assume that $f$ solves the minimization problem. 
Then the optimality condition~\labelcref{eq:OC} implies that there is $\eta\in\Phi^1_\X(0)$ and $\sg\in\partial\func(f)$ such that $0=\eta+\sigma\sg$. 
Using that $\Phi^1_\X(0)=\{\eta\in\X^* \st \norm{\eta}_* \leq 1\}$, we obtain
\begin{align*}
    \sigma = \frac{\norm{\eta}_*}{\norm{\sg}_*} \leq  \frac{1}{\inf\left\lbrace \norm{\sg}_* \st \sg \in \partial\func(f) \right\rbrace} = \sigma_*(f).
\end{align*}
Conversely, assume that $\sigma\leq\sigma_*(f)$.
Because of \mbox{weak-*} closedness of $\partial\func(f)$ and \mbox{weak-*} lower semicontinuity of $\norm{\cdot}_*$, we can choose $\sg\in\partial\func(f)$ such that $\sigma_*(f)=1/\norm{\sg}_*$.
We define $\eta:=-\sigma\sg$ and compute
\begin{align*}
    \langle\eta,v\rangle=-\sigma\langle\sg,v\rangle \leq \sigma_* \norm{\sg}_* \norm{v} = \norm{v},\qquad\forall v\in\X.
\end{align*}
This proves that $\eta\in\Phi^1_\X(0)$ and hence the optimality condition~\labelcref{eq:OC} is satisfied.

To show the upper bound~\labelcref{ineq:upper_bound_alpha*}, we use convexity of $\func$ to derive for all $\sg\in\partial\func(f)$
\begin{align*}
    &\func(f) + \langle\sg,\hat{u}-f\rangle \leq \func(\hat{u}) \\
    \implies &\func(f)-\func(\hat{u}) \leq \norm{\sg}_* \norm{f-\hat{u}} \\
    \iff &\norm{\sg}_* \geq \frac{\func(f)-\func(\hat{u})}{\norm{f-\hat{u}}}, \quad\forall v\in\argmin\func, \\
    \iff &\norm{\sg}_* \geq \sup_{\hat{u}\in\argmin\func}\frac{\func(f)-\func(\hat{u})}{\norm{f-\hat{u}}}.
\end{align*}
Taking the infimum in $\sg\in\partial\func(f)$ this implies the desired estimate.
\end{proof}

\section{Extinction Time}

\begin{prop}\label{prop:extinct}
Assume that $\argmin\func\neq\emptyset$ and that there exists $\lambda_1>0$ such that $\lambda_1\norm{u-\hat{u}}\leq \func(u)-\func(\hat{u})$ for all $\hat{u}\in\argmin\func$ and $u\in\X$ 
Then it holds that
\begin{align*}
    \left(\argmin\func\right) \cap \left(\argmin_{u\in\X} \frac{1}{p}\norm{u-f}^p + \sigma \func(u)\right) \neq \emptyset
\end{align*}
if and only if $\sigma \geq \sigma_{**}(f)$, where
\begin{align}\label{eq:alpha_**}
    \sigma_{**}(f):= \inf_{\hat{u}\in\argmin\func}\inf_{\eta\in\Phi_\X^p(\hat{u}-f)} \sup_{v\in\X} \frac{\langle -\eta, v-\hat{u}\rangle}{\func(v)-\func(\hat{u})}.
\end{align}
Furthermore, one has the estimates
\begin{align}\label{ineq:lower_bd_sigma_**}
    \sigma_{**}(f) &\geq \inf_{\hat{u}\in\argmin\func}\frac{\norm{f-\hat{u}}^p}{\func(f)-\func(\hat{u})} \\
    \sigma_{**}(f) &\leq \frac{1}{\lambda_1} \inf_{\hat{u}\in\argmin\func}\norm{f-\hat{u}}^{p-1} < \infty. 
\end{align}
\end{prop}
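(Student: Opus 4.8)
The plan is to reduce everything to the optimality condition \labelcref{eq:OC} for the $p$-proximal operator. First I would observe that the coercivity hypothesis $\lambda_1\norm{u-\hat u}\le\func(u)-\func(\hat u)$, applied with $u$ a second minimizer, forces $\argmin\func$ to be a singleton $\{\hat u\}$ with $\func(\hat u)=\min\func<\infty$; hence the intersection in the statement is nonempty exactly when $\hat u\in\pprox_{\sigma\func}(f)$, and the outer infimum over $\argmin\func$ in the definition of $\sigma_{**}(f)$ is vacuous. By \labelcref{eq:OC}, $\hat u\in\pprox_{\sigma\func}(f)$ iff there is $\eta\in\Phi_\X^p(\hat u-f)$ with $-\eta/\sigma\in\partial\func(\hat u)$, and writing out the subgradient inequality and multiplying by $\sigma>0$ this is equivalent to
\begin{align*}
    \langle-\eta,v-\hat u\rangle\le\sigma\bigl(\func(v)-\func(\hat u)\bigr),\qquad\forall v\in\X.
\end{align*}
Setting $S(\eta):=\sup_{v\in\X}\frac{\langle-\eta,v-\hat u\rangle}{\func(v)-\func(\hat u)}$, where terms with $\func(v)=\func(\hat u)$ (i.e.\ $v=\hat u$, by the singleton property) contribute $0$ under the convention $\tfrac00=0$ and terms with $\func(v)=\infty$ contribute $0$, the displayed condition holds iff $S(\eta)\le\sigma$. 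Therefore $\hat u\in\pprox_{\sigma\func}(f)$ iff $\sigma\ge S(\eta)$ for some $\eta\in\Phi_\X^p(\hat u-f)$, i.e.\ iff $\sigma\ge\inf_{\eta\in\Phi_\X^p(\hat u-f)}S(\eta)=\sigma_{**}(f)$.

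This already settles the ``only if'' direction and the ``if'' direction for $\sigma>\sigma_{**}(f)$. The delicate point, which I expect to be the main obstacle, is the borderline value $\sigma=\sigma_{**}(f)$: here one must show the infimum over $\eta$ is attained. I would argue exactly as in \cref{prop:exact_recon}, exploiting that the duality set can be written as $\Phi_\X^p(\hat u-f)=\{\eta\in\X^*\st\norm{\eta}_*\le\norm{\hat u-f}^{p-1},\ \langle\eta,\hat u-f\rangle\ge\norm{\hat u-f}^{p}\}$ (a short sandwich argument using $\langle\eta,\hat u-f\rangle\le\norm\eta_*\norm{\hat u-f}$ gives equality in the definition), which is a bounded, weak-$\ast$ closed, convex set, hence weak-$\ast$ compact by Banach--Alaoglu; and that $\eta\mapsto S(\eta)$, being a supremum of weak-$\ast$ continuous affine functionals $\eta\mapsto\langle-\eta,v-\hat u\rangle/(\func(v)-\func(\hat u))$, is weak-$\ast$ lower semicontinuous. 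Thus the infimum is attained at some $\eta^*$ with $S(\eta^*)=\sigma_{**}(f)=\sigma$, yielding $\hat u\in\pprox_{\sigma\func}(f)$ and closing the equivalence.

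For the lower bound I would simply test $S(\eta)$ with $v=f$: for any $\eta\in\Phi_\X^p(\hat u-f)$ one has $\langle-\eta,f-\hat u\rangle=\langle\eta,\hat u-f\rangle=\norm{\hat u-f}^p$ (directly from the definition of $\Phi^p_\X$, with the cases $p=1$ and $p>1$ handled uniformly), hence $S(\eta)\ge\frac{\norm{f-\hat u}^p}{\func(f)-\func(\hat u)}$ independently of $\eta$, and taking the infimum over $\eta$ gives the first line of \labelcref{ineq:lower_bd_sigma_**}. For the upper bound I would fix an arbitrary $\eta\in\Phi_\X^p(\hat u-f)$ (nonempty by \cref{prop:duality_proximal}) and estimate, for every $v\neq\hat u$,
\begin{align*}
    \frac{\langle-\eta,v-\hat u\rangle}{\func(v)-\func(\hat u)}\le\frac{\norm{\eta}_*\norm{v-\hat u}}{\lambda_1\norm{v-\hat u}}=\frac{\norm{\eta}_*}{\lambda_1}\le\frac{\norm{\hat u-f}^{p-1}}{\lambda_1},
\end{align*}
using the coercivity hypothesis in the denominator — which also guarantees $\func(v)-\func(\hat u)>0$ strictly for $v\neq\hat u$, so there is no division issue — and $\norm{\eta}_*\le\norm{\hat u-f}^{p-1}$ in the numerator. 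Hence $S(\eta)\le\norm{\hat u-f}^{p-1}/\lambda_1$, so $\sigma_{**}(f)\le\inf_\eta S(\eta)\le\norm{f-\hat u}^{p-1}/\lambda_1<\infty$, which is the second line of \labelcref{ineq:lower_bd_sigma_**}. The degenerate case $f\in\argmin\func$ (where $\hat u=f$ and one checks directly $\sigma_{**}(f)=0$) is consistent with all three claims.
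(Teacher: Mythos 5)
Your proof is correct and follows essentially the same route as the paper: both reduce the membership $\hat u\in\pprox_{\sigma\func}(f)$ via the optimality condition \labelcref{eq:OC} to the inequality $S(\eta)\le\sigma$, obtain the bounds by testing with $v=f$ and by the coercivity hypothesis, and handle the borderline case $\sigma=\sigma_{**}(f)$ through attainment of the infimum over $\eta$. Your two refinements --- noting that the coercivity assumption forces $\argmin\func$ to be a singleton, and spelling out the Banach--Alaoglu compactness argument for the attainment that the paper only asserts --- are both valid and, if anything, make the argument more complete.
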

\begin{proof}
If $\hat{u}\in\argmin\func$ solves \labelcref{eq:p-prox}, from \labelcref{eq:OC} we get $\eta\in\Phi_\X^p(\hat{u}-f)$ and $\sg\in\partial\func(\hat{u})$ such that $0=\eta+\sigma\sg$. 
This implies
\begin{align*}
    \sigma_{**}(f) \leq \sup_{v\in\X} \frac{\langle-\eta, v-\hat{u}\rangle}{\func(v)-\func(\hat{u})} = \sigma \sup_{v\in\X} \frac{\langle\sg, v-\hat{u}\rangle}{\func(v)-\func(\hat{u})} \leq \sigma.
\end{align*}
Conversely, let $\sigma \geq \sigma_{**}(f)$.
Using the assumed coercivity together with closedness of $\argmin\func$ and the duality map, one can show that the infima in~\labelcref{eq:alpha_**} are attained and we denote the minimizers by $\hat{u}\in\argmin\func$ and $\eta\in\Phi_\X^p(\hat{u}-f)$.
Defining $\sg:=-\eta/\sigma$, it holds
\begin{align*}
    \func(\hat{u}) + \langle \sg, u-\hat{u}\rangle 
    &= \func(\hat{u}) + \frac{1}{\sigma} \langle -\eta, u-\hat{u}\rangle \\
    &= \func(\hat{u}) + \frac{\func(u)-\func(\hat{u})}{\sigma}\frac{ \langle -\eta, u-\hat{u}\rangle}{\func(u)-\func(\hat{u})} \\
    &\leq \func(\hat{u}) + \frac{\func(u)-\func(\hat{u})}{\sigma_{**}(f)} 
    \sup_{v\in\X}\frac{ \langle -\eta, v-\hat{u}\rangle}{\func(v)-\func(\hat{u})} \\
    &\leq \func(u),\qquad \forall u\in\dom(\func).
\end{align*}
This shows $\sg\in\partial\func(\hat{u})$ which means that $\hat{u}$ satisfies the optimality condition~\labelcref{eq:OC}.

Next we prove the bounds on $\sigma_{**}(f)$. 
By choosing $u=f$ in the supremum it holds
\begin{align*}
    \sigma_{**}(f) \geq \inf_{\hat{u}\in\argmin\func} \inf_{\eta\in\Phi_\X^p(\hat{u}-f)} \frac{\langle-\eta,f-\hat{u}\rangle}{\func(f)-\func(\hat{u})} = \inf_{\hat{u}\in\argmin\func}\frac{\norm{f-\hat{u}}^p}{\func(f)-\func(\hat{u})}.
\end{align*}
Using the coercivity of $\func$ one obtains
\begin{align*}
    \sigma_{**}(f) &\leq \frac{1}{\lambda_1}\inf_{\hat{u}\in\argmin\func}\inf_{\eta\in\Phi_\X^p(\hat{u}-f)}\sup_{v\in\X} \frac{\norm{\eta}_*\norm{v-\hat{u}}}{\norm{v-\hat{u}}} \\
    &= \frac{1}{\lambda_1} \inf_{\hat{u}\in\argmin\func}\norm{f-\hat{u}}^{p-1} 
    < \infty.
\end{align*}
\end{proof}

\section{Remaining Proofs}

\begin{proof}[Proof of \cref{prop:ground_states}]
Since the duality map is never empty we can choose $\eta\in\Phi_\X^p(\hat{u}_*-u_*)$ and define $\sg:=-\lambda_p\eta$.
We claim that $\sg\in\partial\func(u_*)$, which would conclude the proof.
To see this we compute
\begin{align*}
      \func(u_*)+\langle\sg,u-u_*\rangle 
    &= \func(u_*)+\lambda_p\langle\eta,u_*-u\rangle \\
    &\leq \func(u_*)+\lambda_p\left[\frac{1}{p}\norm{\hat{u}_*-u}^p-\frac{1}{p}\norm{\hat{u}_*-u_*}^p\right] \\
    &\leq \func(u_*) + (\func(u)-\func(\hat{u}_*)) - (\func(u_*)-\func(\hat{u}_*)) \\
    &= \func(u),\qquad \forall u\in\X,
\end{align*}
where we used \cref{prop:duality_proximal} and \labelcref{eq:ground_states}.
\end{proof}

\begin{proof}[Proof of \cref{prop:angle}]
By the definition of $u^{k+1/2}$ it holds
\begin{align*}
    \frac{1}{p}\norm{u^{k+1/2}-u^k}^p + \sigma^k\func(u^{k+1/2})\leq 
    \frac{1}{p}\norm{u-u^k}^p + \sigma^k\func(u),\quad\forall u\in\X.
\end{align*}
Choosing $u=\norm{u^{k+1/2}}u^k$ and reordering the resulting inequality yields
\begin{align*}
    &\phantom{\leq}\;\frac{1}{p}\frac{1}{\sigma^k}\left( \norm{u^{k+1/2}-u^k}^p - \left|\norm{u^{k+1/2}}-1\right|^p\right) \\
    &\leq \func\left(\norm{u^{k+1/2}}u^k\right) - \func(u^{k+1/2}) \\
    &\leq \norm{u^{k+1/2}}^\alpha \left(\func(u^k) - \func(u^{k+1})\right) \\
    &\leq C\left(\func(u^k) - \func(u^{k+1})\right),
\end{align*}
where we used that $u^{k+1/2}$ converges and is therefore bounded.
Summing both sides and using that $k\mapsto\func(u^k)$ is non-increasing, one obtains that the left hand side is summable.
Since by the triangle inequality it is non-negative, it therefore converges to zero.
The reformulation for Hilbert norms and $p=2$ is straightforward by expanding the square.
\end{proof}

Before we can prove the convergence statement of the proximal power method, we need a continuity property of the $p$-proximal operator, which relies on the compactness \cref{ass:compactness}. 
The proof works precisely as the one by \citet{bungert2020nonlinear}.

\begin{lemma}\label{lem:continuity_T}
Let \cref{ass:compactness} be fulfilled.
Let $v_n,v\in\X$ such that $\norm{v_n-v}\to 0$ as $n\to\infty$ and let $u_n\in \pprox_{\sigma(v_n)\func}(v_n)$ for $n\in\N$, where the regularization parameters are chosen in such a way that $\sigma(v_n)\to \sigma$ as $n\to\infty$. Then there exists $u\in\X$ such that (up to a subsequence) it holds $\norm{u_n-u}\to 0$ and $u\in \pprox_{\sigma\func}(v)$.
\end{lemma}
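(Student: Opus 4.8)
The plan is to combine the compactness \cref{ass:compactness} with a direct-method / stability-of-minimizers argument, exactly in the spirit of the analogous Hilbert-space statement of \citet{bungert2020nonlinear}. Since here only the base point $v_n$ and the parameter $\sigma(v_n)$ vary while the functional $\func$ itself stays fixed, no recovery sequences are needed and the argument is in fact a simplified $\Gamma$-convergence argument.

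First I would establish boundedness. Writing $\sigma_n:=\sigma(v_n)$ and testing the minimality of $u_n$ against the competitor $w_0=0\in\dom\func$, for which $\func(w_0)=0$ by absolute homogeneity (recall \cref{ass:hom_nullspace} holds throughout \cref{sec:power_method}), one obtains
\begin{align*}
    \frac{1}{p}\norm{u_n-v_n}^p + \sigma_n\func(u_n) \le \frac{1}{p}\norm{v_n}^p.
\end{align*}
Since $v_n\to v$ the right-hand side is bounded, and since $\func\geq 0$ this forces both $\norm{u_n-v_n}$ (hence $\norm{u_n}$) and $\sigma_n\func(u_n)$ to be bounded. Because $\sigma_n\to\sigma$ with $\sigma>0$ in the relevant application (there $\sigma^k\nearrow\sigma^*\in(0,\infty)$), we have $\sigma_n\geq\sigma/2$ eventually and therefore $\sup_n\func(u_n)<\infty$, so $\sup_n\,\norm{u_n}+\func(u_n)<\infty$. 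By \cref{ass:compactness} a subsequence (not relabelled) satisfies $\norm{u_n-u}\to 0$ for some $u\in\X$.

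Next I would identify $u$ as a minimizer of $w\mapsto\frac1p\norm{w-v}^p+\sigma\func(w)$, i.e. $u\in\pprox_{\sigma\func}(v)$. Fix an arbitrary competitor $w\in\X$. Using continuity of $\norm{\cdot}$ and $t\mapsto t^p$, lower semicontinuity of $\func$, $\sigma_n\to\sigma$, $\func\geq 0$, and finally the minimality of $u_n$ against the fixed competitor $w$, one estimates
\begin{align*}
    \frac1p\norm{u-v}^p + \sigma\func(u)
    &\le \liminf_{n\to\infty}\left(\frac1p\norm{u_n-v_n}^p + \sigma_n\func(u_n)\right) \\
    &\le \liminf_{n\to\infty}\left(\frac1p\norm{w-v_n}^p + \sigma_n\func(w)\right)
    = \frac1p\norm{w-v}^p + \sigma\func(w),
\end{align*}
where the first inequality uses $\norm{u_n-v_n}^p\to\norm{u-v}^p$ together with $\liminf_n\sigma_n\func(u_n)\geq\sigma\liminf_n\func(u_n)\geq\sigma\func(u)$. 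As $w$ was arbitrary, $u\in\pprox_{\sigma\func}(v)$, which completes the proof.

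The only delicate point — and hence the main obstacle — is the step from ``$\sigma_n\func(u_n)$ bounded'' to ``$\func(u_n)$ bounded'', which is precisely where $\liminf_n\sigma_n>0$ is needed; in the homogeneous setting of \cref{sec:power_method} this is clean because $\func\geq 0$ lets us take $w_0=0$, whereas for a general proper convex lower semicontinuous $\func$ one would additionally invoke the affine lower bound for $\func$ to close the boundedness estimate (with an extra smallness condition when $p=1$). Everything else is routine lower semicontinuity bookkeeping.
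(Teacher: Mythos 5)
Your proof is correct and is essentially the argument the paper defers to (it cites \citet{bungert2020nonlinear} rather than spelling it out): boundedness via testing against $0$, compactness from \cref{ass:compactness}, and stability of minimizers via lower semicontinuity. Your flagged caveat that one needs $\liminf_n\sigma(v_n)>0$ to pass from boundedness of $\sigma_n\func(u_n)$ to boundedness of $\func(u_n)$ is accurate, and it is indeed satisfied where the lemma is invoked, since there $\sigma^k\nearrow\sigma^*\in(0,\infty)$.
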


\begin{proof}[Proof of \cref{thm:cvgc_power_method}]
First we note that the step sizes $\sigma^k$ converge.
For the constant parameter rule this is trivial.
For the variable rule, we observe that according to \cref{prop:decrease_func} 
$$\sigma^k=\frac{c}{\func(u^k)}\leq \frac{c}{\func(u^{k+1})}=\sigma^{k+1},$$
hence the step sizes are an increasing sequence.
Furthermore, by \labelcref{eq:coercivity} the step sizes are bounded from above and hence converge to some $\sigma^*>0$.

From \cref{prop:decrease_func} we infer that the sequence $(u^k)_{k\in\N}$ is relatively compact, meaning that up to a subsequence $\norm{u^k-u^*}\to 0$ for some element $u^*\in\X$ with $\norm{u^*}=1$. 
After another subsequence refinement \cref{lem:continuity_T} implies that also the sequence $u^{k+1/2}$ can be assumed to converge to an element $\tilde{u}$ which satisfies $\tilde{u}\in \pprox_{\sigma^*}(u^*)$. 

For the constant parameter rule it holds $\sigma^*=\sigma(u^*)$ and for the variable rule because of lower semicontinuity:
\begin{align*}
    \sigma^*=\lim_{k\to\infty}\sigma^k\leq \frac{c}{\liminf_{k\to\infty}\func(u^k)} \leq \frac{c}{\func(u^*)}= \sigma(u^*).
\end{align*}
Since furthermore $\sigma(u^*)<\sigma_{**}(u^*)$ by definition (cf.~\labelcref{ineq:lower_bd_sigma_**}), we infer from \cref{prop:extinct} that $\tilde{u}\neq 0$.

From the scheme \labelcref{eq:power_it_prox} and the convergence of $u^k$ and $u^{k+1/2}$ it now follows that
\begin{align*}
u^*=\frac{\tilde{u}}{\norm{\tilde{u}}},\quad\tilde{u}\in \pprox_{\sigma^*}(u^*).
\end{align*}
If we define $\mu:=\norm{\pprox_{\sigma^*}(u^*)}>0$ this can be reordered to~\labelcref{eq:eigenvec_prox}.

It remains to show that $\mu\leq 1$ for $p=1$ and $\mu<1$ for $p>1$.
We start with the case $p=1$. 
In this case \labelcref{eq:eigenvec_prox} is equivalent to
\begin{align*}
    \norm{\mu u^*-u^*} + \sigma^* \func(\mu u^*) \leq \norm{v-u^*} + \sigma^* \func(v),\quad\forall v\in\X.
\end{align*}
Choosing $v=u^*$ then yields
\begin{align*}
    |\mu-1|\norm{u^*} + \sigma \mu \func(u^*) \leq \sigma^* \func(u^*).
\end{align*}
If we now assume that $\mu>1$ we obtain
\begin{align*}
    (\mu-1)\norm{u^*} \leq 0
\end{align*}
which is a contradiction to $u^*\neq 0$.
Hence, we have shown that $\mu\leq 1$ if $p=1$.

In the case $p>1$ we note that the optimality conditions for~\labelcref{eq:eigenvec_prox} read
\begin{align*}
    0&\in\Phi^p_\X((\mu-1)u^*) + \sigma^*\partial\func(\mu u^*) \\
    &=|\mu-1|^{p-2}(\mu-1)\Phi_\X^p(u^*) + \sigma^* \mu^{\alpha-1} \partial\func(u^*),
\end{align*}
where we used that $\Phi_\X^p$ and $\partial\func$ are homogeneous with degree $p-1$ and $\alpha-1$, respectively (cf.~\cref{prop:duality_proximal} and \citet{bungert2019asymptotic}).
Using the properties of the duality map and the subdifferential of absolutely $p$-homogeneous functionals this implies
\begin{align*}
    0 = |\mu-1|^{p-2}(\mu-1)\norm{u^*}^p + \sigma^* \mu^{\alpha-1} \func(u^*).
\end{align*}
If we assume that $\mu\geq 1$, this equality implies $u^*=0$ which is a contradiction.
\end{proof}

\begin{proof}[Proof of \cref{thm:cvgc_to_subdiff_ev}]
The statement follows from setting $\lambda=(1-\mu)|1-\mu|^{p-2}/(\sigma^*\mu^{p-1})\geq 0$ where $\mu$ is as in the previous proof.
\end{proof}

\end{appendices}

\end{document}